\newcommand{\thetitle}{\uppercase{The aggregated unfitted finite element method
\\ on parallel tree-based adaptive meshes}}
\newcommand{\theauthors}{Santiago Badia\textsuperscript{a,b}, Alberto F.
Mart\'in\textsuperscript{a}, Eric
Neiva\textsuperscript{b,c,}\footnote{Corresponding author. \\ Emails:
\theemailsanddate} and Francesc Verdugo\textsuperscript{b}}
\newcommand{\theauths}{S. Badia, A. F. Mart\'in, E. Neiva and F. Verdugo}
\newcommand{\theaffiliations}{
	\textsuperscript{a} School of Mathematics, Monash University, Clayton,
	Victoria, 3800, Australia. \\ [0.5em]
	\textsuperscript{b} CIMNE – Centre Internacional de M\`etodes Num\`erics en
	Enginyeria, \\ Edifici C1, Campus Nord UPC, C. Gran Capit\`a S/N, 08034
	Barcelona, Spain. \\ [0.5em] \textsuperscript{c} Department of Civil and
	Environmental Engineering, Universitat Polit\`ecnica de Catalunya, \\ Edifici
	C2, Campus Nord UPC, C. Jordi Girona 1-3, 08034 Barcelona, Spain. \\ [0.5em]
}
\newcommand{\theemailsanddate}{
  \texttt{\{santiago.badia,alberto.martin\}@monash.edu}, 
  \texttt{\{eneiva,fverdugo\}@cimne.upc.edu} \\
	\today
}
\newcommand{\thethanks}{Financial support from the European Commission under the
FET-HPC ExaQUte project (Grant agreement ID: 800898) within the Horizon 2020
Framework Programme and the project RTI2018-096898-B-I00 from the
``FEDER/Ministerio de Ciencia e Innovación – Agencia Estatal de Investigación'' is
gratefully acknowledged. F. Verdudo acknowledges support from the Spanish Ministry
of Economy and Competitiveness through the ``Severo Ochoa Programme for Centers of
Excellence in R\&D (CEX2018-000797-S)" and \emph{Secretaria d'Universitats i
Recerca} of the Catalan Government in the framework of the Beatriu Pinós Program
(Grant Id.: 2016 BP 00145). E. Neiva gratefully acknowledges the support received
from the Catalan Government through a FI fellowship (2019 FI-B2-00090; 2018
FI-B1-00095; 2017 FI-B-00219). Financial support to CIMNE via the CERCA Programme
/ Generalitat de Catalunya is also acknowledged. The authors thankfully
acknowledge the computer resources at Marenostrum-IV and the technical support
provided by the Barcelona Supercomputing Center (RES-ActivityID: FI-2019-1-0007,
IM-2019-2-0007, IM-2019-3-0008). This work was also supported by computational
resources provided by the Australian Government through NCI under the National
Computational Merit Allocation Scheme.}
\definecolor{myellow}{RGB}{255,230,128}
\definecolor{gray20}{RGB}{204,204,204}
\definecolor{mygray}{RGB}{204,204,204}
\definecolor{mygreen}{RGB}{138,203,95}
\definecolor{myblue}{RGB}{77,151,214}
\definecolor{lstgrey}{rgb}{0.95,0.95,0.95}
\acrodef{pde}[PDE]{partial differential equation}
\acrodef{bvp}[BVP]{boundary value problem}
\acrodef{amr}[AMR]{Adaptive mesh refinement and coarsening}
\acrodef{ls}[LS]{Level-Set}
\acrodef{dof}[DOF]{degree of freedom}
\acrodef{vef}[VEF]{vertex, edge, and face}
\acrodef{cg}[CG]{continuous Galerkin}
\acrodef{dg}[DG]{discontinuous Galerkin}
\acrodef{vms}[VMS]{variational multiscale}
\acrodef{sps}[SPS]{symmetric projection stabilization}
\acrodef{fe}[FE]{finite element}
\acrodef{fem}[FEM]{finite element method}
\acrodef{xfem}[XFEM]{extended finite element method}
\acrodef{agfe}[agFE]{aggregated finite element}
\acrodef{agfem}[AgFEM]{aggregated finite element method}
\acrodef{cgm}[CG]{conjugate gradient}
\acrodef{amg}[AMG]{algebraic multigrid}
\acrodef{lb}[LB]{Li and Bettess}
\acrodef{ob}[OB]{O\~{n}ate and Bugeda}
\acrodef{dd}[DD]{domain decomposition}
\acrodef{mpi}[MPI]{message passing interface}
\acrodef{sfc}[SFC]{space filling curve}
\newtheorem{theorem}{Theorem}[section]
\newtheorem{lemma}[theorem]{Lemma}
\newtheorem{proposition}[theorem]{Proposition}
\newtheorem{corollary}[theorem]{Corollary}
\newtheorem{definition}[theorem]{Definition}
\newtheorem{assumption}[theorem]{Assumption}
\newtheorem{remark}[theorem]{Remark}
\newcommand{\myadded}[1]{{\leavevmode #1}}
\newcommand{\closure}[2][3]{%
{}\mkern#1mu\overline{\mkern-#1mu#2}}
\def\fempar{{\texttt{FEMPAR}}}
\def\p4est{{\texttt{p4est}}}
\def\t8code{{\texttt{t8code}}}
\def\petsc{{\texttt{PETSc}}}
\def\gamg{{\texttt{GAMG}}}
\def\cell{T}
\def\T{{\mathcal{T}}}
\def\V{{\mathcal{V}}}
\def\M{{\mathcal{M}}}
\def\S{{\mathcal{S}}}
\def\x{{\boldsymbol{x}}}
\def\u{{\underline{\mathbf{u}}}}
\def\H{{\mathrm{H}}}
\def\F{{\mathrm{F}}}
\def\A{{\mathrm{A}}}
\def\C{{\mathrm{C}}}
\def\L{{\mathrm{L}}}
\def\X{{\mathrm{X}}}
\def\Y{{\mathrm{Y}}}
\def\TG{{\mathrm{TG}}}
\def\RG{{\mathrm{RG}}}
\def\art{{\mathrm{art}}}
\def\ls{{\mathrm{ls}}}
\def\wp{{\mathrm{W}}}
\def\out{{\mathrm{O}}}
\def\ip{{\mathrm{I}}}
\def\ag{{\mathrm{ag}}}
\def\act{{\mathrm{act}}}
\def\ncf{{\mathrm{ncf}}}
\def\std{{\mathrm{std}}}
\newcommand{\opnorm}{\@ifstar\@opnorms\@opnorm}
\newcommand{\@opnorms}[1]{%
  \left|\mkern-1.5mu\left|\mkern-1.5mu\left|
   #1
  \right|\mkern-1.5mu\right|\mkern-1.5mu\right|
}
\newcommand{\@opnorm}[2][]{%
  \mathopen{#1|\mkern-1.5mu#1|\mkern-1.5mu#1|}
  #2
  \mathclose{#1|\mkern-1.5mu#1|\mkern-1.5mu#1|}
}
\def\opnormh#1{\opnorm{#1}_h}
\def\opnormvh#1{\opnorm{#1}_{\V(h)}}
\newcommand{\restrict}[2]{{\left. #1 \right|_{#2}}}
\begin{document}

\thispagestyle{empty}

\renewcommand*{\thefootnote}{\fnsymbol{footnote}}

\begin{center}
{ \bf {\thetitle}}

\vspace*{1em}

\theauthors

\vspace*{1em}

\theaffiliations

\end{center}

\setcounter{footnote}{0}
\renewcommand*{\thefootnote}{\arabic{footnote}}

\begin{center}

{\bf Abstract}

\vspace*{1em}

\begin{minipage}{0.9\textwidth}
\begin{small}

In this work, we present an adaptive unfitted finite element scheme that
combines the aggregated finite element method with parallel adaptive mesh
refinement. We introduce a novel scalable distributed-memory implementation of
the resulting scheme on locally-adapted Cartesian forest-of-trees meshes. We
propose a two-step algorithm to construct the finite element space at hand 
by means of a discrete extension operator that
carefully mixes aggregation constraints of problematic degrees of freedom, which
get rid of the small cut cell problem, and standard hanging degree of freedom
constraints, which ensure trace continuity on non-conforming meshes. Following
this approach, we derive a finite element space that can be expressed as the
original one plus well-defined linear constraints. Moreover, it requires minimum
parallelization effort, using standard functionality available in existing
large-scale finite element codes. Numerical experiments demonstrate its optimal
mesh adaptation capability, robustness to cut location and parallel efficiency,
on classical Poisson $hp$-adaptivity benchmarks. Our work opens the path to
functional and geometrical error-driven dynamic mesh adaptation with the
aggregated finite element method in large-scale realistic scenarios. Likewise,
it can offer guidance for bridging other scalable unfitted methods and parallel
adaptive mesh refinement.

\end{small}

\end{minipage}
\end{center}

\vspace*{1em}

%\end{abstract}

%\maketitle

%\noindent{\bf 2010 Mathematics Subject Classification:} 65N12, 65N15, 65N30

\noindent{\bf Keywords:}  Unfitted finite elements $\cdot$ Algebraic multigrid
$\cdot$ Adaptive mesh refinement $\cdot$ Forest of trees $\cdot$ High
performance scientific computing

%\tableofcontents

\section{Introduction} \label{sec:int}

\ac{amr} using adaptive tree-based meshes is attracting growing interest in
large-scale simulations of physical problems modelled with \acp{pde}. Research
over the past few years has demonstrated that tree-based \ac{amr} enables
efficient data storage and mesh traversal, fast computation of mesh hierarchy
and cell adjacency and extremely scalable partitioning and dynamic load
balancing. Although several cell topologies have been
studied~\cite{Holke2018,Burstedde2016}, attention has centred around
quadrilateral (2D) or hexahedral (3D) adaptive meshes endowed with standard
isotropic 1:4 (2D) and 1:8 (3D) refinement rules. They form tree structures that
are commonly known as quadtrees %~\cite{Finkel1974} or octrees~\cite{Meagher1982}
or \emph{forest-of-quadtrees} or \emph{-octrees}, when the former are patched
together. There is ample literature concerning single-octree
meshes %~\cite{Sundar2007,TiankaiTu2005,Sundar2008} 
and extensions to
forest-of-octrees~\cite{burstedde_p4est_2011,Isaac2014}. State-of-the art in
these techniques is available at the open source parallel forest-of-octrees
meshing engine \texttt{p4est}~\cite{burstedde_p4est_2011}.

In the context of parallel adaptive \ac{fe} solvers, forest-of-trees have been
an essential component in many large-scale application
problems~\cite{Olm2018,Rudi2015,Burstedde2008,neiva2019scalable}.
As they provide multi-resolution by local mesh adaptation, they are convenient,
among others, in the following three scenarios: (1) a \emph{priori} mesh
refinement, when the \ac{bvp} exhibits local features that must be captured with
high resolution, but are known in advance, see
e.g.~\cite{Olm2018,neiva2019scalable}; (2) a \emph{posteriori} mesh refinement,
driven by error estimators~\cite{Ainsworth2000}, for solutions of \acp{bvp}
whose local features are not known or spatially evolve over
time~\cite{Rudi2015}; and (3) to control geometric approximation errors of
static or moving boundaries and interfaces, in combination with unfitted \ac{fe}
methods~\cite{burman2019posteriori}.

In spite of their scalable multi-resolution capability, practical integration
of forest-of-trees in large-scale \ac{fe} codes is hindered by the fact that,
in general, they are non-conforming meshes. In particular, they contain the
widely known \emph{hanging} \acp{vef}, occurring at the interface of
neighbouring cells with different refinement levels. Mesh non-conformity
increases implementation complexity of \ac{fe} methods, especially, when they
are conforming. In this case, \acp{dof} lying on \emph{hanging} \acp{vef}
cannot have an arbitrary value, they must be constrained to guarantee trace
continuity across cell interfaces. Set up (during \ac{fe} space construction)
and application (during \ac{fe} assembly) of hanging \ac{dof} constraints have
been thoroughly studied~\cite{Rheinboldt1980,Shephard1984}. Several large-scale
\ac{fe} software packages also provide state-of-the-art treatment of hanging
\acp{dof}~\cite{bangerth_algorithms_2012,Badia2019b}. They accommodate to
standard practice of constraining the processor-local portion of the mesh to
the cells the processor owns and a single layer of adjacent off-processor
cells, the so-called ghost cells; it is well-established that hanging \ac{dof}
constraints do not expand beyond a single layer of ghost cells, see
e.g.~\cite{Badia2019b} for comprehensive and rigorous demonstration.

While research is mature on generic parallel tree-based adaptive \ac{fe}
methods, enabling applications in arbitrarily complex geometries has been
vastly overlooked. Usage of \emph{body-fitted} meshes (i.e. those whose faces
conform to the domain boundary) is not a choice in large-scale parallel
computations, due to the bottleneck in generating and partitioning large
unstructured meshes. On the other hand, unfitted (also known as embedded or
immersed) \ac{fe} methods blend exceptionally well with adaptive tree-based
meshes. However, to the authors' best knowledge, this line of research has been
barely explored. The main advantage of unfitted methods is that, instead of
requiring body-fitted meshes, they embed the domain of interest in a
geometrically simple background grid (usually a uniform or an adaptive
Cartesian grid), which can be generated much more efficiently. Unfortunately,
unfitted \ac{fe} methods also suffer from well-known drawbacks, above all, the
so-called \emph{small cut cell problem}. The intersection of a background cell
with the physical domain can be arbitrarily small, with unbounded aspect
ratios. This leads to severely ill-conditioned systems of algebraic linear
equations, if no specific strategy alleviates this issue~\cite{DePrenter2017}.

Many different unfitted methodologies have emerged that cope with the small cut
cell problem (see, e.g., the cutFEM method~\cite{burman_cutfem_2015}, the Finite 
Cell Method~\cite{Schillinger2015}, the AgFEM method~\cite{Badia2018}, and some 
variants of the XFEM method~\cite{sukumar_modeling_2001}). They have also been 
useful for many multi-phase and multi-physics applications with moving interfaces 
(e.g.\ fracture mechanics~\cite{Sukumar2000}, fluid–structure
interaction~\cite{Massing2015}, free surface flows~\cite{Sauerland2011}), in
applications with varying domain topologies (e.g. shape or topology
optimization~\cite{Burman2018}, or in applications where the geometry is not
described by CAD data (e.g. medical simulations based on CT-scan
images~\cite{Nguyen2017}). However, fewer works have addressed scalable parallel
unfitted methods, which are essential for realistic large-scale applications.
Notable exceptions are the works in~\cite{badia_robust_2017,Jomo2018}, that
design tailored preconditioners for unfitted methods.
%In~\cite{Jomo2018} a
%single-level Additive Schwarz preconditioner scales up to 4 million \acp{dof},
%whereas~\cite{badia_robust_2017} considers a two-level balancing domain
%decomposition preconditioner~\cite{Mandel2003} and reports results up to almost
%6 million \acp{dof}. 
Recent parallelization strategies~\cite{Verdugo2019} have
taken a different path, by considering enhanced \ac{fe} formulations that lead
to well-conditioned system  matrices, regardless of cut location. As a result,
they are amenable to resolution with state-of-the-art large-scale iterative
linear solvers such as \ac{amg}, %~\cite{Ruge1987,Vanek2001}, 
for which there are
highly-scalable parallel implementations in renowned scientific computing
packages such as %\trilinos~\cite{Heroux2005} or
\petsc~\cite{petsc-user-ref}. This approach yields superior
scalability, e.g.\ in~\cite{Verdugo2019}, a distributed-memory implementation of
the aggregated \ac{fem}, referred to as Ag\ac{fem}, scales up to 16K cores and
up to nearly 300M \acp{dof}, on the Poisson equation in complex 3D domains,
discretised with uniform meshes.

This paper aims to fill the gap between parallel adaptive tree-based meshing and
robust and scalable unfitted \ac{fe} techniques. We restrict the scope of our
work to Ag\ac{fem}~\cite{Badia2018}, although other enhanced unfitted
formulations, such as the CutFEM method~\cite{burman_cutfem_2015}, could also be
considered. Ag\ac{fem} is based 
on a discrete extension operator from well-posed to ill-posed \acp{dof}.
The definition of this operator relies on aggregating cells on the boundary to remove
basis functions associated with badly cut cells and, thus, eliminate
ill-conditioning issues. The formulation enjoys good numerical properties, such
as stability, condition number bounds, optimal convergence, and continuity with
respect to data; detailed mathematical analysis of the method is included
in~\cite{Badia2018} for elliptic problems and in~\cite{Badia2018a} for the
Stokes equation. Conversely, cell aggregation locally increases the
characteristic size of the resulting aggregated mesh, which has an impact on the
constant (not order) in the convergence of the method, even though such constant
has experimentally been observed to be similar to the one of the non-aggregated
\ac{fem}~\cite{Badia2018}. In this work, we demonstrate that Ag\ac{fem} is also
amenable to parallel tree-based meshes and optimal error-driven $h$-adaptivity
in practical large-scale \ac{fe} applications. We refer to the resulting method
as $h$-Ag\ac{fem}. Furthermore, since $h$-Ag\ac{fem} is capable of adding mesh
resolution wherever it is needed, it is not hindered by the local accuracy issue
mentioned above.

The outline of this work is as follows. We detail first, in
Section~\ref{sec:agfem}, a possible way to construct conforming Ag\ac{fe} spaces
on top of non-conforming (adaptive) meshes. The main challenge is to combine the
linear constraints arising from both hanging and problematic \acp{dof}. We propose
a two-tier approach that generates first the hanging \ac{dof} constraints and then
modifies them with the Ag\ac{fem} constraints. We show that this technique yields
unified linear constraints that have no circular dependencies. Furthermore,
distributed-memory extension of the method can be implemented using common
functionality of large-scale \ac{fe} software packages. In our case, we have
implemented the method in the large-scale FE software package
\fempar~\cite{badia-fempar}, which exploits the highly-scalable forest-of-tree
mesh engine \p4est{}. In the numerical tests of Section~\ref{sec:numericals}, we
consider the Poisson equation as model problem on several complex geometries and
$hp$-\ac{fem} standard benchmarks. We demonstrate similar accuracy and optimal
convergence as with standard body-fitted $h$-\ac{fem} and consistent robustness
and scalability, using \emph{out-of-the-box} \ac{amg} solvers from the \petsc{}
project. We draw the main conclusions of our work in
Section~\ref{sec:conclusions}. \myadded{Finally, we supplement the paper contents
with an exhaustive step-by-step derivation of Ag\ac{fe} spaces, in
Appendix~\ref{appendix:agfespace}, and with the proof that Ag\ac{fe} spaces on
nonconforming meshes retain the good numerical properties ensured on uniform
meshes, in Appendix~\ref{appendix:proofs}.}

\section{The aggregated unfitted finite element method on non-conforming
adaptive meshes}\label{sec:agfem}

Our goal is to define conforming, \ac{cg}, Ag\ac{fe} spaces on top of
non-conforming adaptive meshes. In this section, we introduce notation and
concepts necessary to construct such spaces. We start with a typical immersed
boundary setup on a non-conforming mesh in Section~\ref{sec:geometry}; for
scalability reasons, we restrict ourselves to the particular case of
(non-conforming) forest-of-trees meshes. We continue with the description of the
cell aggregation scheme in Section~\ref{sec:cell-aggr}, which is the cornerstone
of Ag\ac{fem}. As stated in Section~\ref{sec:int}, our two-level strategy to
construct Ag\ac{fe} spaces is (1) generation of \ac{dof} constraints enforcing
conformity on hanging \acp{vef}, followed by (2) generation of \ac{dof}
aggregation constraints, judiciously combined with the previous ones. To mirror
our approach in this text, we define first standard conforming Lagrangian \ac{fe}
spaces in Section~\ref{sec:std-fe-spaces}, then we lay out aggregated counterparts
in Section~\ref{sec:ag-fe-spaces}. At first, we look at the sequential version of
these spaces; distributed-memory extension is covered in
Section~\ref{sec:dm-impl}.

\subsection{Embedded boundary setup}\label{sec:geometry}

Let $\Omega \subset \mathbb{R}^d$ be an open bounded polygonal domain, with $d \in
\{2,3\}$ the number of spatial dimensions, in which our \ac{pde} problem is posed.
As usual, in the context of embedded boundary methods, let $\Omega^\art$ be an
\emph{artificial} or \emph{background} domain with a simple shape that includes
the \emph{physical} one, i.e.\ $\Omega \subset \Omega^\art$, \myadded{as in
Figure~\ref{fig:immersed-setup-a}}. We assume that $\Omega^\art$ can be easily
meshed using, e.g.~Cartesian grids or unstructured $d$-simplexes. Let $\T_h$
represent a partition of $\Omega^\art$ into cells, with $h_\cell$ the
characteristic size of a cell $\cell \in \T_h$ and $h \doteq \max_{\cell \in \T_h}
h_\cell$. Any $\cell \in \T_h$ is the image of a differentiable homeomorphism
$\Phi_\cell$  over a set of admissible open reference
$d$-polytopes~\cite{badia-fempar}, such as $d$-simplexes or $d$-cubes. Let
$\mathcal{F}_\cell$ denote the \emph{disjoint} $d-1$-skeleton of $\cell \in \T_h$,
e.g.~$\mathcal{F}_\cell$ is composed of vertices, edges and faces for $d = 3$.
Hereafter, we abuse terminology and refer to $\mathcal{F}_\cell$ as the set of
\acp{vef} of $\cell \in \T_h$. We assume that $\T_h$ is \emph{non-conforming}. In
particular, we allow that
\begin{assumption}\label{ass:nonconf}
	For any two cells $\cell,\cell' \in \T_h$, satisfying $\closure{\cell} \cap
	\closure{\cell'} \neq \emptyset$, there exists $f \in \mathcal{F}_\cell$ and
	$f' \in \mathcal{F}_{\cell'}$ such that: (i) $\closure{f} = \closure{f'} =
	\closure{\cell} \cap \closure{\cell'}$; or (ii) $\closure{f} = \closure{\cell}
	\cap \closure{\cell'}$ and $f \subsetneq f'$, or vice versa.
\end{assumption}

In other words, any pair of intersecting \acp{vef} in $\T_h$ are either identical
or one is a proper subset of the other. We notice that meshes satisfying (i)
everywhere are \emph{conforming}. On the other hand, a \emph{hanging} \ac{vef} is
any \ac{vef} $g \in \mathcal{F}_{\cell}$ satisfying $g \subset \closure{f}$ and
$\closure{f} = \closure{\cell} \cap \closure{\cell'}$ in (ii), while $f'$ is
referred to as the \emph{owner} \ac{vef} of $g$, \myadded{see
Figure~\ref{fig:immersed-setup-bb}}. Typical examples of hanging \acp{vef} in,
e.g.~2D, are cell vertices lying in the middle of an edge of a coarser cell.

\begin{figure}[ht!]
  \centering
  \begin{subfigure}{0.39\textwidth}
    \centering
    \includegraphics[width=0.95\textwidth]{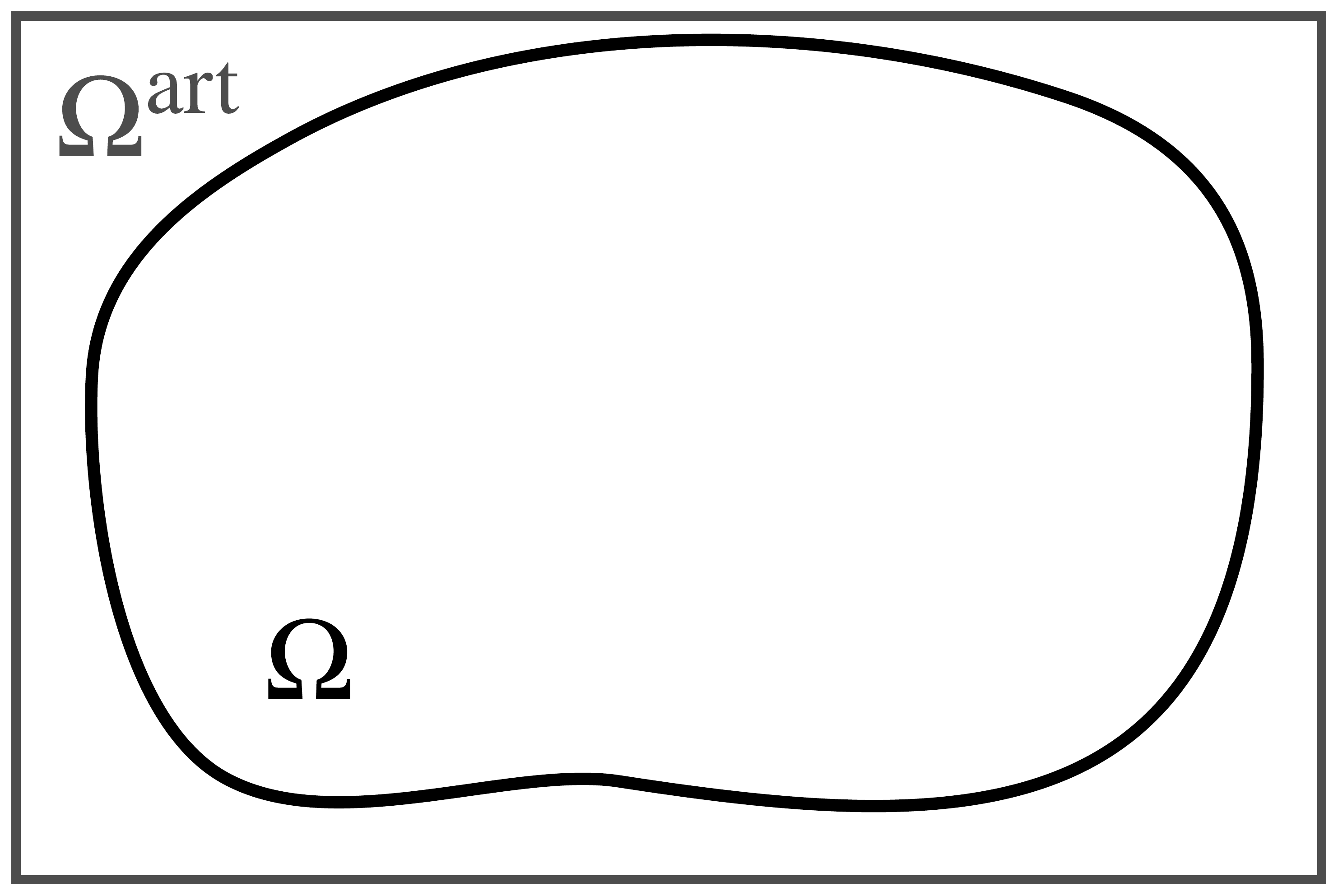}
    \caption{}
    \label{fig:immersed-setup-a}
  \end{subfigure}
  \begin{subfigure}{0.39\textwidth}
    \centering
    \includegraphics[width=0.95\textwidth]{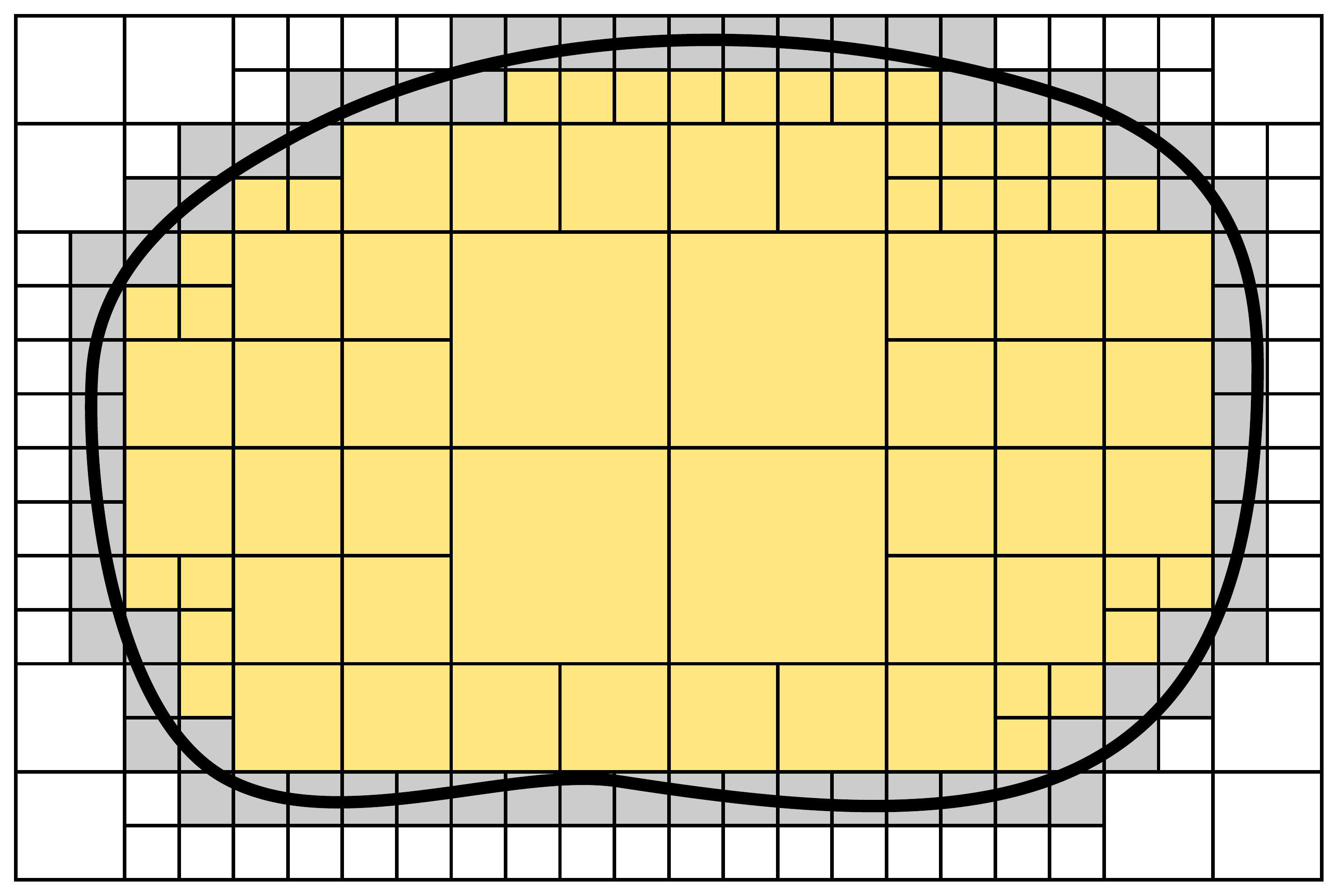}
    \caption{}
    \label{fig:immersed-setup-b}
  \end{subfigure}
  \begin{subfigure}{0.20\textwidth}
    \centering
    \includegraphics[width=\textwidth]{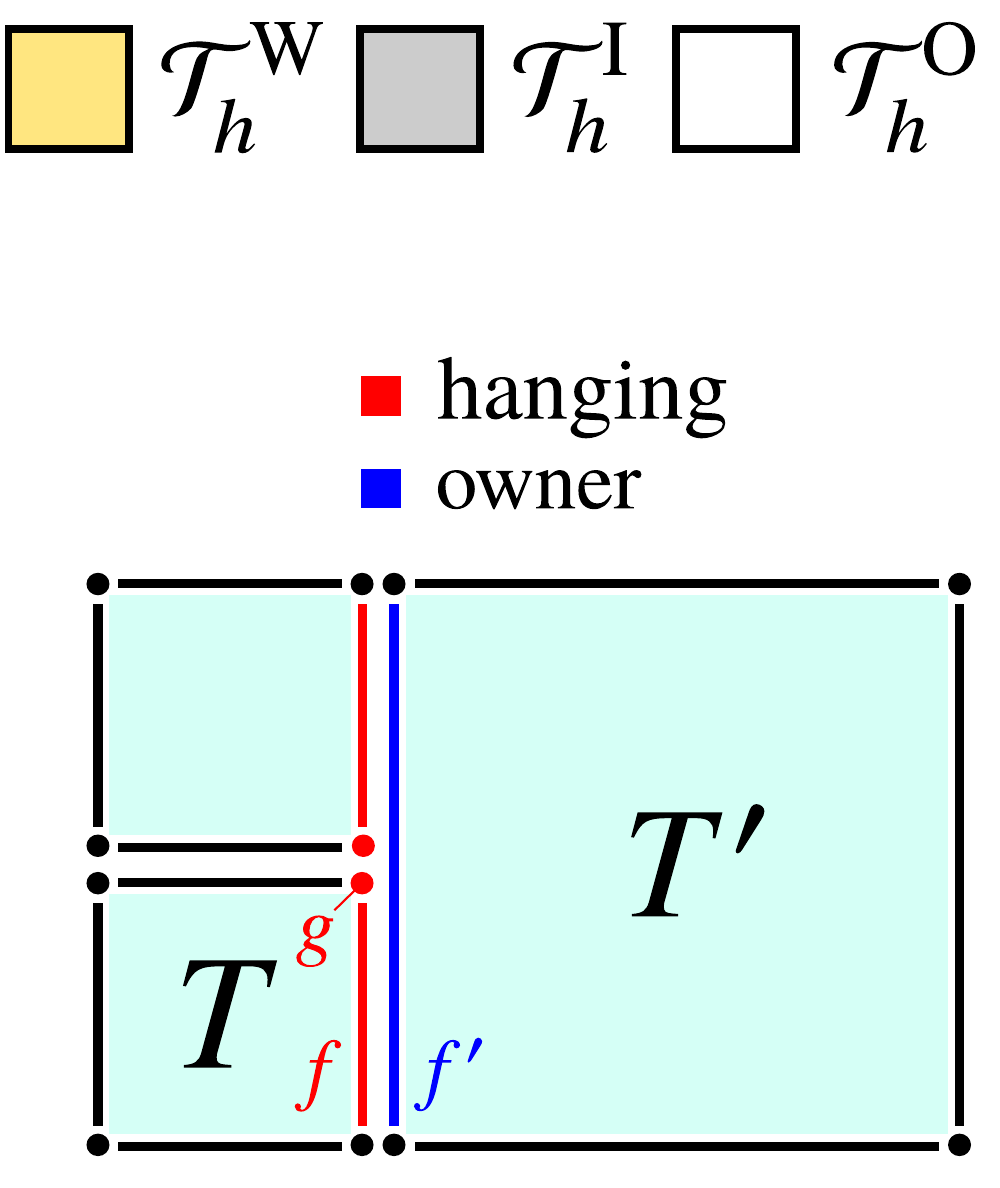}
    \caption{}
    \label{fig:immersed-setup-bb}
  \end{subfigure}
  \caption{\myadded{We define in~(\textsc{a}) a \emph{simple} artificial domain 
  $\Omega^\art$, which includes the physical one $\Omega$. In~(\textsc{b}), the 
  background mesh $\T_h$  meets the 2:1 balance condition. It is partitioned into 
  \emph{well-posed} $\T_h^\wp$, \emph{ill-posed} $\T_h^\ip$ and \emph{exterior} 
  $\T_h^\out$ cells (we assume $\eta_0 = 1$, i.e. well-posed iff interior and 
  ill-posed iff cut). In~(\textsc{c}), we illustrate Assumption~\ref{ass:nonconf} 
  (ii) with a hanging vertex $g$, a hanging edge $f$ and their owner edge $f'$.}}
  \label{fig:immersed-setup}
\end{figure}

As outlined in Section~\ref{sec:int}, we restrict ourselves to the family of
(non-conforming) \emph{forest-of-trees} meshes. This kind of meshes are derived
from recursive application of standard isotropic 1:$2^d$ refinement rules on a
(possibly unstructured) initial coarse mesh. By construction, they satisfy
Assumption~\ref{ass:nonconf}. We choose forest-of-trees, because they are a
well-established approach for parallel scalable adaptive mesh generation and
partitioning~\cite{burstedde_p4est_2011}; in particular, we aim to exploit a
recent highly-scalable parallel \ac{fe} framework that supports $h$-adaptivity
on forest-of-trees~\cite{Badia2019b}.

For \ac{fe} applications, mesh non-conformity hardens the construction of
conforming \ac{fe} spaces and the subsequent steps in the simulation. For the sake
of alleviating this extra complexity, we follow common
practice~\cite{burstedde_p4est_2011,bangerth_algorithms_2012} of enforcing the
\emph{2:1-balance} or \emph{1-irregularity} condition, that prescribes, at most,
2:1 size relations between neighbouring cells, \myadded{see
Figure~\ref{fig:immersed-setup-b}}. 2:1 balance ensures that hanging \ac{dof}
% \revtext{Remove details on 0-balance}{A general definition of this condition depends 
% on the dimension of the geometrical entity across two neighbouring 
% cells~\cite[Definition 2.8]{Badia2019b}. In our context, for simplicity, we adopt 
% the criterion that 2:1 size relations must hold for any two geometrically neighbouring 
% cells (i.e.~across a vertex, edge or face), the so-called 2:1 0-balance.\footnote{We 
% assume 0-balance, because we are interested in Lagrangian \acp{fe}. This choice leads 
% to a correct (parallel) \ac{fe} solver, although weaker 1-balance would also be enough, 
% see~\cite[Proposition 3.7]{Badia2019b}.}}
constraints are \emph{single-level} or \emph{direct}, i.e.~hanging \acp{dof} are 
not constrained by other hanging \acp{dof}~\cite[Proposition 3.6]{Badia2019b}. 
Furthermore, in a distributed-memory environment, any hanging \ac{dof} constraint 
can be locally applied, as each subdomain holds a single layer of ghost 
cells~\cite[Proposition 4.1]{Badia2019b}. Although the exposition from 
Sections~\ref{sec:std-fe-spaces} to~\ref{sec:dm-impl} assumes the mesh is a 2:1 
balanced forest-of-trees mesh (with isotropic refinements), all concepts introduced 
there can be generalised to other families of non-conforming meshes, such as 
anisotropic \emph{solvable} meshes~\cite{Cerveny2019}.

% Although the exposition from 
% Sections~\ref{sec:std-fe-spaces} to~\ref{sec:dm-impl} assumes the mesh is a 2:1 
% balanced forest-of-trees mesh (with isotropic refinements), all concepts introduced 
% there can be generalised to other families of non-conforming meshes. However, in 
% order to accommodate conforming \ac{fe} spaces, they must fulfil two necessary, 
% but not sufficient, conditions: (1) all hanging-to-owner \ac{vef} relations meet
% Assumption~\ref{ass:nonconf} and (2) any (chain of) constraints defined on the
% mesh ends with unconstrained \acp{dof}, i.e.~the mesh does not produce cyclic
% hanging \ac{dof} constraint dependencies. Apart from forest-of-trees, other
% meshing approaches, e.g.~anisotropic \emph{solvable} meshes
% in~\cite{Cerveny2019}, fulfil (1) and (2). It is not in our scope to fully
% characterise all possible families of non-conforming meshes that can be
% considered in this work.

We introduce now the immersed boundary setting on top of the artificial domain
$\Omega^\art$. For the sake of simplicity and without loss of generality,
the boundary of the physical domain $\partial \Omega$ is represented by the
zero level-set of a known scalar function $\varphi^\ls$, namely $\partial
\Omega \doteq \{ \x \in \mathbb{R}^d : \varphi^\ls(\x) = 0 \}$. The problem
geometry could be described by other means, e.g.\ from 3D CAD data, by
providing techniques to compute the intersection between cell edges and
surfaces. %(see, e.g.~\cite{marco_exact_2015}). 
In any case, the following exposition does not depend on the way geometry is handled.

Let now the physical domain be defined as the set of points where the level-set
function is negative, namely $\Omega \doteq \{ \x \in \mathbb{R}^d :
\varphi^\ls(\x) < 0 \}$. For any cell $\cell \in \T_h$, let us also define the
quantity $\eta_\cell \doteq |\cell \cap \Omega|/|\cell|$, where $|\cdot|$
denotes the measure (area or volume), and a user-defined parameter $\eta_0 \in
(0,1]$. In order to isolate badly cut cells, we classify cells of $\T_h$ in terms
of $\eta_\cell$ and $\eta_0$. A cell $\cell \in \T_h$ is:
(1) \emph{well-posed}, if $\eta_\cell \geq \eta_0$;
(2) \emph{ill-posed}, if $\eta_0 > \eta_\cell > 0$; or
(3) \emph{exterior}, if $\eta_\cell = 0$, i.e.~$\cell \cap \Omega = \emptyset$, \myadded{see 
Figure~\ref{fig:immersed-setup-b}}. We remark that, for $\eta_0 = 1$, well-posed cells 
coincide with interior cells $\cell \subset \Omega$, whereas ill-posed ones are cut. 
In the general case, $\eta_0 \neq 1$, well-posed cells can also be cut cells with a 
large enough portion inside the physical domain; the distinction between interior and 
cut cells is no longer relevant. The set of well-posed (resp.~ill-posed and exterior) 
cells is represented with $\T_h^\wp$ and its union $\Omega_\wp = \bigcup_{\cell \in 
\T_h^\wp} \overline{\cell} \subset \Omega$ (resp.~$(\T_h^\ip, \Omega_\ip)$ and 
$(\T_h^\out, \Omega_\out )$). We also have that $\{ \T_h^\wp, \T_h^\ip, \T_h^\out \}$ 
is a partition of $\T_h$. We let $\T_h^\act \doteq \T_h^\wp \cup \T_h^\ip$ and 
$\Omega^\act \doteq \Omega_\wp \cup \Omega_\ip$ denote the so-called \emph{active}
triangulation and domain.

\subsection{Cell aggregation}\label{sec:cell-aggr}

Ag\ac{fe} spaces are grounded on a cell aggregation map that assigns a well-posed 
cell to every ill-posed cell. We refer to this map as the \emph{root cell map} $R : 
\T_h \to \T_h^\wp$; it takes any cell $\cell \in \T_h$ and returns a cell $R(\cell) 
\in \T_h^\wp$, referred to as the \emph{root} cell. In order to define this map, we 
consider a partition of $\T_h$, denoted by $\T_h^\ag$, into non-overlapping cell 
aggregates $A_\cell$. Each aggregate $A_\cell$ is a connected set, composed of 
several ill-posed cells and \emph{only} one well-posed root cell $\cell$. Aggregates 
forming $\T_h^\ag$ are built with \myadded{a cell aggregation scheme~\cite{Badia2018} 
described in Figure~\ref{fig:aggr-steps}. 

The scheme builds the aggregates incrementally from the (well-posed) root cells,
by attaching facet-connected ill-posed cells to them, until all ill-posed cells
are aggregated. We recall that facets refer to edges in 2D or faces in 3D. For
non-conforming meshes, facet connections comprise those among cells of same or
different size. Frequently, an ill-posed cell is facet-connected to several
aggregates. Therefore, a criterion is needed to choose among the aggregating
candidates. Previous work on uniform meshes~\cite{Badia2018} adopt a rule that
minimises the distance between ill-posed and root cell barycentres. In this way,
we keep the characteristic length of the aggregates as small as possible to
improve Ag\ac{fem}'s accuracy . Here, in order to consider the effect of the
different cell sizes, it is more adequate to minimise the relative distance
between ill-posed and root cell nodes:}

\begin{definition}[Closest root cell criterion]\label{def:closest_root}\
  Given an ill-posed cell $\cell \in \T_h$ and the set of aggregating candidates
  \[ \mathcal{L}(\cell) = \{ \cell' \in \T_h \, : \, \cell' \text{ is already
  aggregated  and } \exists \text{ a facet } F \in \mathcal{F}_\cell \text{ or } F
  \in \mathcal{F}_{\cell'} \text{ with } \closure{F} = \closure{\cell} \cap
  \closure{\cell'}, \ F \cap \Omega \neq \emptyset \}, \] that is, $\mathcal{L}$
  is the set of aggregated cells connected to $\cell$ through a conforming or
  hanging facet $F$. The \emph{closest} aggregating candidate $\cell^*$
  satisfies\[
		\tilde{d}(\cell,\cell^*) = \min_{\cell' \in
		\mathcal{L}(\cell)} \tilde{d}(\cell,\cell')
	\]with\[
		\tilde{d}(\cell,\cell') \doteq \dfrac{\max_{\gamma \in
		\mathcal{F}_{R(\cell')}^0, \delta \in \mathcal{F}_{\cell}^0}
		\| \x^\gamma - \x^\delta \|_\infty}{\max_{\gamma,\gamma'
		 \in
		\mathcal{F}_{R(\cell')}^0}
		\| \x^\gamma - \x^{\gamma'} \|_\infty},
	\] for any $\cell' \in\mathcal{L}(\cell)$, where $\mathcal{F}_{\cell}^{0}$
	denotes the set of vertices of $\cell$, $R(\cell)$ the root of $\cell$,
	$\x^{\square}$ the coordinates of vertex $\square$ and $\| \cdot \|_{\infty}$
	denotes the infinity norm.
\end{definition}

\myadded{When there is more than one closest aggregating candidate $\cell^*$, we
simply choose the one whose root cell has higher global cell index.} The output of
the cell aggregation scheme is the root cell map $R$ and it can be readily applied
to arbitrary spatial dimensions. We observe that, by construction of the scheme,
maximum aggregate size is bounded above by a constant times the maximum cell size
in the mesh~\cite{Badia2018}. \myadded{Moreover, in order to assure that
aggregates are always connected sets, we assume $\T_{h}$ is defined, such that
$\cell \cap \Omega$ is connected, for any $\cell \in \T_{h}$. Connected aggregates
are convenient for the numerical analysis in Appendix~\ref{appendix:proofs}, as
they allow one to use the Deny-Lions lemma to prove approximability properties.}

% For general expressions of the immersed boundary $\Gamma$, it could happen that
% a cell $\cell \in \T_{h}$ is such that $\cell \cap \Omega$ is a set of connected
% domains that are disconnected with each other. In this case, we consider each of
% these disconnected components as a separate cut cell. Thus, we redefine the mesh
% $\T_{h}$, replicating these cut cells for each disconnected part, and use the
% cell aggregation procedure above verbatim. The reason for this is to assure that
% aggregates are always connected domains and, e.g.~the Deny-Lions lemma can be
% used to prove approximability properties in Appendix~\ref{appendix:proofs}.
% Alternatively, one could assume that $\Gamma$ has bounded curvature and the mesh
% is \emph{fine enough} \myadded{such that, for any $\cell \in \T_{h}$, $\cell
% \cap \Omega$ is connected} (see, e.g.~\cite{hansbo2002unfitted}).

\begin{figure}[ht!]
  \centering
  \begin{subfigure}{0.3\textwidth}
    \centering
    \includegraphics[width=0.95\textwidth]{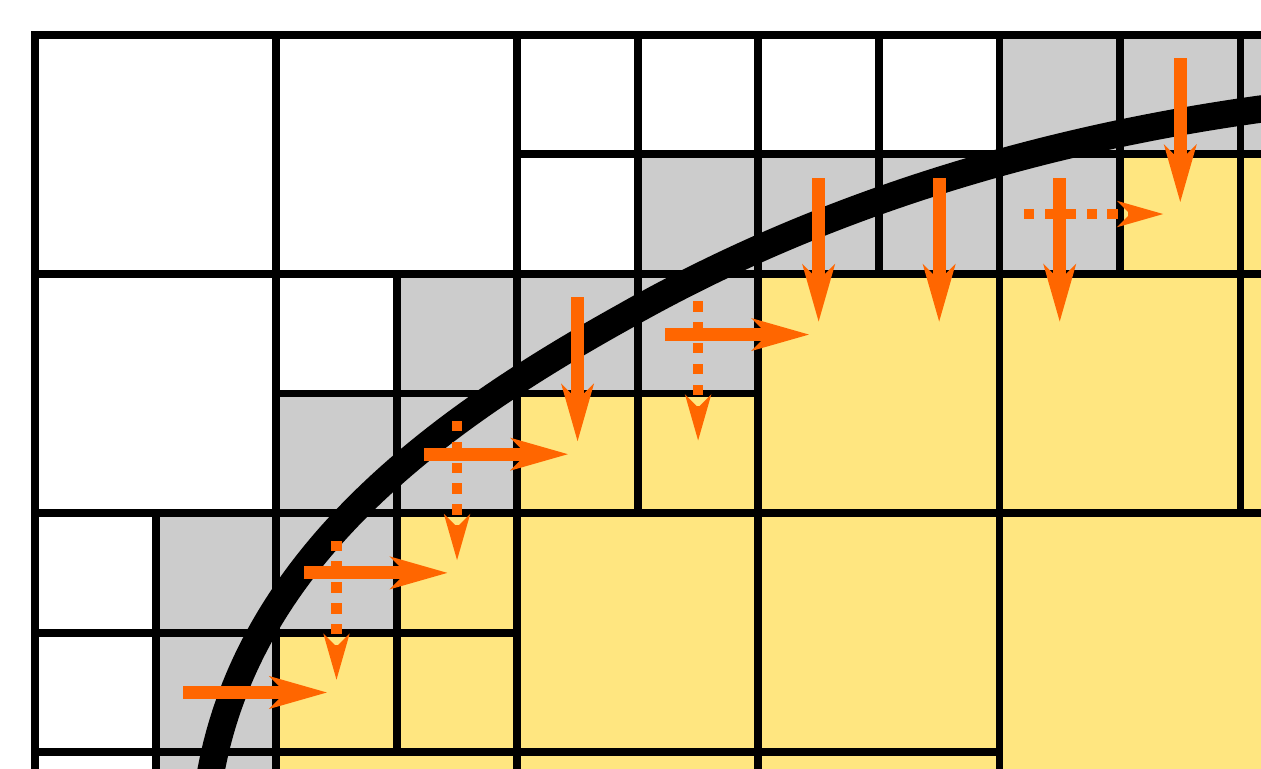}
    \caption{}
    \label{fig:aggr-steps-a}
  \end{subfigure}
  \begin{subfigure}{0.3\textwidth}
    \centering
    \includegraphics[width=0.95\textwidth]{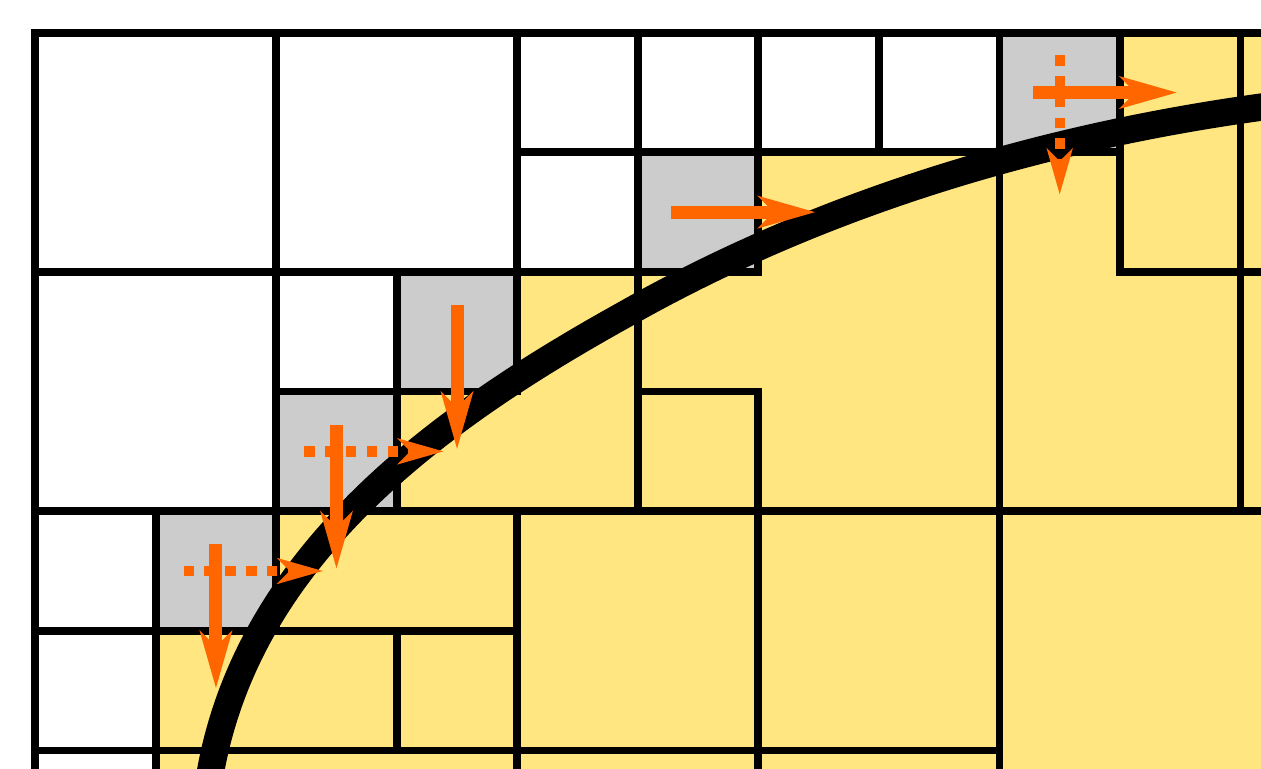}
    \caption{}
    \label{fig:aggr-steps-b}
  \end{subfigure}
  \begin{subfigure}{0.3\textwidth}
    \centering
    \includegraphics[width=0.95\textwidth]{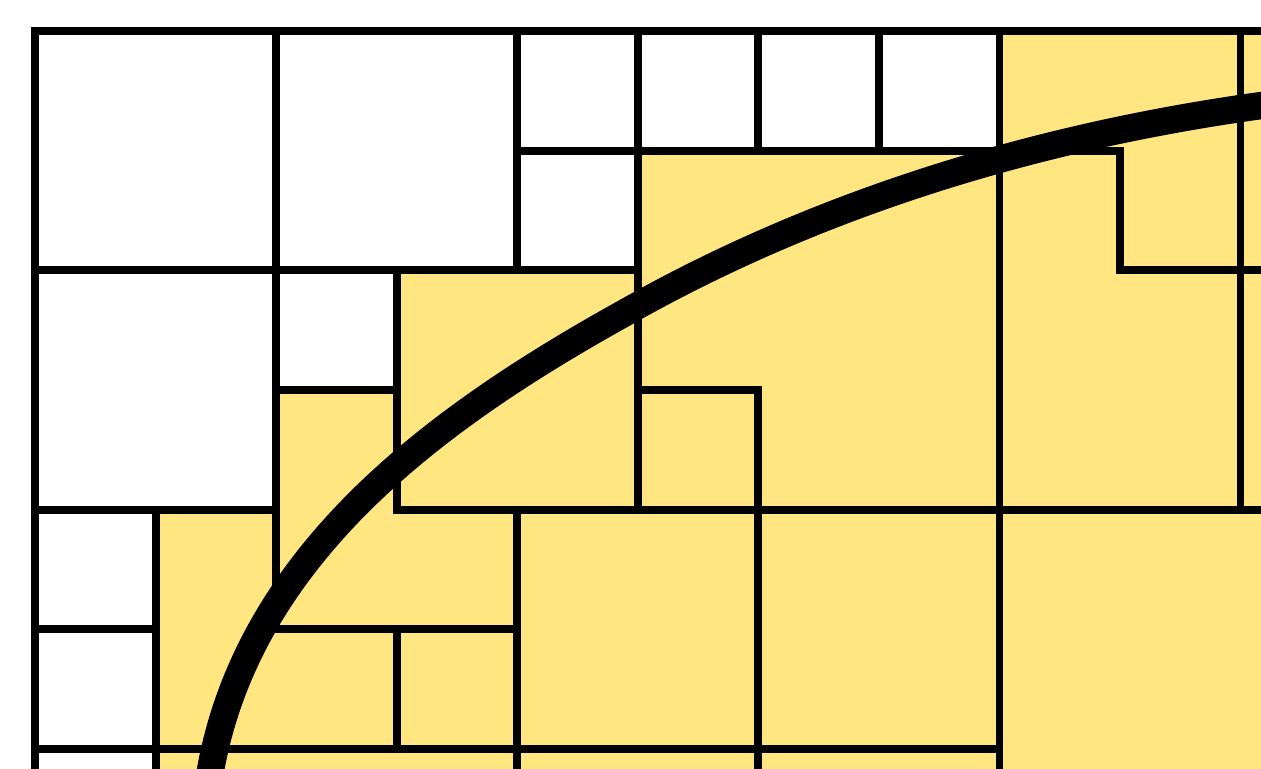}
    \caption{}
    \label{fig:aggr-steps-c}
  \end{subfigure}
  \caption{\myadded{Close-up to the top left corner of Figure~\ref{fig:immersed-setup-b} 
  describing the cell aggregation scheme~\cite{Badia2018} in three steps. The
  initial aggregates are well-posed cells; their root cells are assigned to be
  themselves. Next, we incrementally attach ill-posed cells. An ill-posed cell,
  when facet-connected to an aggregate, is attached to the closest root cell, in
  the sense of Definition~\ref{def:closest_root}. Arrows in (\textsc{a}) and
  (\textsc{b}) point ill-posed cells to all possible candidates; selected
  candidates are pointed by continuous arrows, non-selected with discontinuous
  ones. The black thin lines represent the boundaries of the aggregates. From one
  step to the next one, some of the lines between adjacent cells are removed. This
  means that the adjacent cells have been merged into the same aggregate. The
  procedure leads to $\T_h^\ag$, represented in (\textsc{c}).}}
  \label{fig:aggr-steps}
\end{figure}

\subsection{Standard Lagrangian conforming finite element
spaces}\label{sec:std-fe-spaces}

\myadded{Our aim now is to present our notation to describe conventional
\emph{conforming} \ac{fe} spaces on top of tree-based meshes; they are referred to
as \emph{standard} or \emph{std}, in contrast to the \emph{aggregated} spaces
presented later in Section~\ref{sec:ag-fe-spaces}. We aim at solving a \ac{pde}
problem in the physical domain $\Omega$, subject to boundary conditions on
$\partial \Omega$. We assume Dirichlet conditions on $\Gamma_\mathrm{D} \subset
\partial \Omega$. For unfitted meshes, it is not obvious to impose Dirichlet
conditions in the approximation space in a strong manner. In consequence, we will
assume \emph{weak} imposition of Dirichlet boundary conditions on
$\Gamma_\mathrm{D}$.

Our starting point is the typical \ac{cg} \ac{fe} space, denoted by $\V_h^\ncf$,
in which we enforce continuity across conforming \acp{vef}, i.e.~those meeting
Assumption~\ref{ass:nonconf} (i). As usual in \ac{fem}, $\V_h^\ncf$ is grounded on
defining cell-wise functional spaces $\mathcal{V}(\cell)$, a canonical basis for a
set of local \acp{dof}, a geometrical ownership of the local \acp{dof} by the cell
\acp{vef} and a local-to-global map to glue together local \acp{dof} that lie in
the same geometrical position. For the sake of simplicity and without loss of
generality, we assume the local spaces $\mathcal{V}(\cell)$ are scalar-valued
Lagrangian \acp{fe}, of the same order $q$ everywhere. Extension to vector-valued
or tensor-valued Lagrangian \acp{fe} is straightforward; it suffices to apply the
same approach component by component. We denote by $\Sigma$ the set of global
\acp{dof} in $\T_h^\act$ associated to $\V_h^\ncf$.

% Our starting point is the \ac{fe} space that takes the form
% \[
%   \V_h^\ncf \doteq \{ v|_T \in \V(\cell) \text{ for any } \cell \in \T_h^\act 
%   \, : \, \restrict{\jump{v}}{f} = 0 \text{ for any } f \in \tilde{\mathcal{F}} \},
% \]
% where $\tilde{\mathcal{F}}$ represents the set of \acp{vef} meeting
% Assumption~\ref{ass:nonconf} (i). Thus, $\V_h^\ncf$ has continuous traces across
% pairs of cells, whose touching interface \acp{vef} are identical. On the other
% hand, $\mathcal{V}(\cell)$ is a suitable vector space of functions defined on
% $\cell \in \T_h^\act$. For the sake of simplicity and without loss of generality,
% we assume the local spaces $\mathcal{V}(\cell)$ are scalar-valued Lagrangian
% \acp{fe}, of the same order $q$ everywhere. Extension to vector-valued or
% tensor-valued Lagrangian \acp{fe} is straightforward; it suffices to apply the
% same approach component by component. We denote by $\Sigma$ the set of global
% \acp{dof} in $\T_h^\act$ associated to $\V_h^\ncf$.

When $\T_h^\act$ is non-conforming, it is clear that $\V_h^\ncf$ yields
discontinuous approximations across hanging \acp{vef}. Therefore, the resulting
\ac{fe} space is non-conforming (i.e.~it is not a subspace of its
infinite-dimensional counterpart) and, thus, not suitable for \ac{cg} methods. To
recover global (trace) continuous \ac{fe} approximations, values of \acp{dof}
lying \emph{only} on hanging \acp{vef} cannot be arbitrary, they must be linearly
constrained. In practice, this means to restrict $\V_h^\ncf$ into a conforming
\ac{fe} subspace $\V_h^\std$.

In order to introduce $\V_h^\std$, let $\Sigma \doteq \{\Sigma^\F, \Sigma^\H \}$
denote a partition into \emph{free} and \emph{hanging} \acp{dof}; the latter
refers to the subset of global \acp{dof} lying \emph{only} on hanging \acp{vef}.
We let now $\M^{\H}_\sigma$ denote the subset of \acp{dof} constraining $\sigma 
\in \Sigma^\H$, referred to as the set of \emph{master} \acp{dof} of $\sigma$. 
Recalling Assumption~\ref{ass:nonconf} (ii), we observe that, given $\sigma \in 
\Sigma^\H$, lying on a hanging \ac{vef} $f$ of a cell $\cell$, its constraining 
\acp{dof} are located in the closure of their owner \ac{vef} $\overline{f'}$ of 
a coarser cell $\cell'$~\cite[Proposition 3.6]{Badia2019b}. Setup and resolution
of hanging \ac{dof} constraints for Lagrangian \ac{fe} spaces is
well-established knowledge~\cite{Rheinboldt1980} and, for conciseness, not
reproduced here.

Finally, we introduce the \emph{standard} conforming \ac{fe} space. Given $v_h =
\sum_{\sigma \in \Sigma} v^\sigma_h \phi^\sigma \in \V_h^\ncf$, we let
\begin{equation}\label{eq:std-fe-space}
	\V_h^\std \doteq \{ v_h \in \V_h^\ncf : v^\sigma_h = \sum_{\sigma' \in
	\M^{\H}_\sigma} C^\H_{\sigma\sigma'} v_h^{\sigma'} \ \text{for any} \ \sigma
	\in \Sigma^\H \},
\end{equation}
where $C^\H_{\sigma\sigma'} = \phi^{\sigma'} (\x^{\sigma})$ and $\phi^{\sigma'}$
is the global shape function of $\V_h^\ncf$ associated with $\sigma'$. Note that
$C^\H_{\sigma\sigma'} \neq 0$, by definition of $\sigma \in \Sigma^\H$ and
$\M^{\H}_\sigma$. We observe that $\V_h^\std \subset \V_h^\ncf$ is conforming, in
particular, $v_h \in {\mathcal{C}^0}(\Omega^\act)$.}

\subsection{Aggregated Lagrangian finite element spaces}\label{sec:ag-fe-spaces}

\myadded{The space $\V_h^\std$, introduced in Section~\ref{sec:std-fe-spaces}, is
conforming, but leads to arbitrarily ill-conditioned systems of linear algebraic
equations, unless an extra technique is used to remedy it. This is the main
motivation to introduce Ag\ac{fe} spaces (see, e.g.~\cite{Badia2018,Badia2018a}).
The main idea is to remove from $\V_h^\std$ ill-posed \acp{dof}, associated with
small cut cells, by constraining them as a linear combination of \acp{dof} with
local support in a well-posed cell. For this purpose, we assign each ill-posed
\ac{dof} to a well-posed cell, via the root cell map $R$. Following this, we
extrapolate the value at the ill-posed \ac{dof}, in terms of the \ac{dof} values
at the root cell. Thus, the problem is posed in terms of well-posed \acp{dof} only, 
recovering the ill-posed \acp{dof} with a discrete extension operator.

In this section, we derive an Ag\ac{fe} space $\V_h^\ag$ as a subspace of
$\V_h^\std$. We focus on laying out the key aspect to combine the new linear
constraints, arising from ill-posed \ac{dof} removal, with those already
restricting $\V_h^\std$, to enforce conformity. For the sake of completeness, we
refer to Appendix~\ref{appendix:agfespace} for a rigorous description of the extension operator with
combined constraints and the proof that $\V_h^\ag$ is well-defined, e.g.\ it
does not have cycling constraint dependencies. Besides, we demonstrate, in
Appendix~\ref{appendix:proofs}, (cut-independent) well-posedness and condition
number estimates of the linear system arising from this Ag\ac{fe} method on the
Poisson problem defined in \eqref{eq:PoissonEq}.

In order to construct $\V_h^\ag$, we start by recalling the partition of \acp{dof}
associated to $\V_h^\std$ into \emph{free} \acp{dof} and \emph{hanging}
constrained \acp{dof}. The first and most crucial step is to further distinguish,
on top of this partition, among \emph{well-posed} and \emph{ill-posed} \acp{dof}.
To this end, we must define sets of the form $\Sigma^{\X,\Y}$, where $\X \in
\{\wp,\ip\}$ refers to well-posed or ill-posed and $\Y \in \{\F,\H\}$ refers to
free or hanging. We refer to Figure~\ref{fig:spaces} for an illustration of this
classification. The key to combine the constraints is to define well-posed free
\acp{dof} $\Sigma^{\wp,\F}$ as those with local support in (at least one) a
well-posed cell. Specifically, we let $\Sigma^{\wp,\H} \subset \Sigma^\H$ denote
the set of hanging \acp{dof} that are located in $\T_h^\wp$, i.e.~they are a local
\ac{dof} of (at least one) well-posed cell. Then, $\Sigma^{\wp,\F} \subset
\Sigma^\F$ is defined as follows.
\begin{definition}  \label{def:free-well-posed}
  Given $\sigma \in \Sigma^\F$, then $\sigma \in \Sigma^{\wp,\F}$ is a
  \emph{well-posed free \ac{dof}}, if and only if it meets one of the following:
  (i) $\sigma$ is located in $\T_h^\wp$ or (ii) $\sigma$ does not meet
  condition~(i), but $\sigma \in \M_{\sigma'}^\H$ for some $\sigma' \in
  \Sigma^{\wp,\H}$, i.e.~$\sigma$ is outside $\T_h^\wp$, but constrains a
  well-posed hanging \ac{dof} $\sigma'$.
\end{definition}

\begin{figure}[ht!]
  \centering
  \begin{subfigure}{\textwidth}
    \vspace{0.1cm}
    \centering
    {\Large \color[RGB]{255,230,128} $\blacksquare$} $\Omega$ \enskip
    {\Large \color[RGB]{204,204,204} $\blacksquare$} $\Omega^\art \setminus \Omega$ \qquad
    {\Large \color{blue} $\bullet$} $\Sigma^{\wp,\F}$ \enskip
    {\Large \color[RGB]{255,0,255} $\bullet$} $\Sigma^{\wp,\H}$ \enskip
    {\color{blue} $\boldsymbol{\times}$} $\Sigma^{\ip,\F}$ \enskip
    {\color[RGB]{255,0,255} $\boldsymbol{\times}$} $\Sigma^{\ip,\H}$
	\end{subfigure} \vspace{0.01cm}\\
	\begin{subfigure}{0.48\textwidth}
    \includegraphics[width=0.95\textwidth]{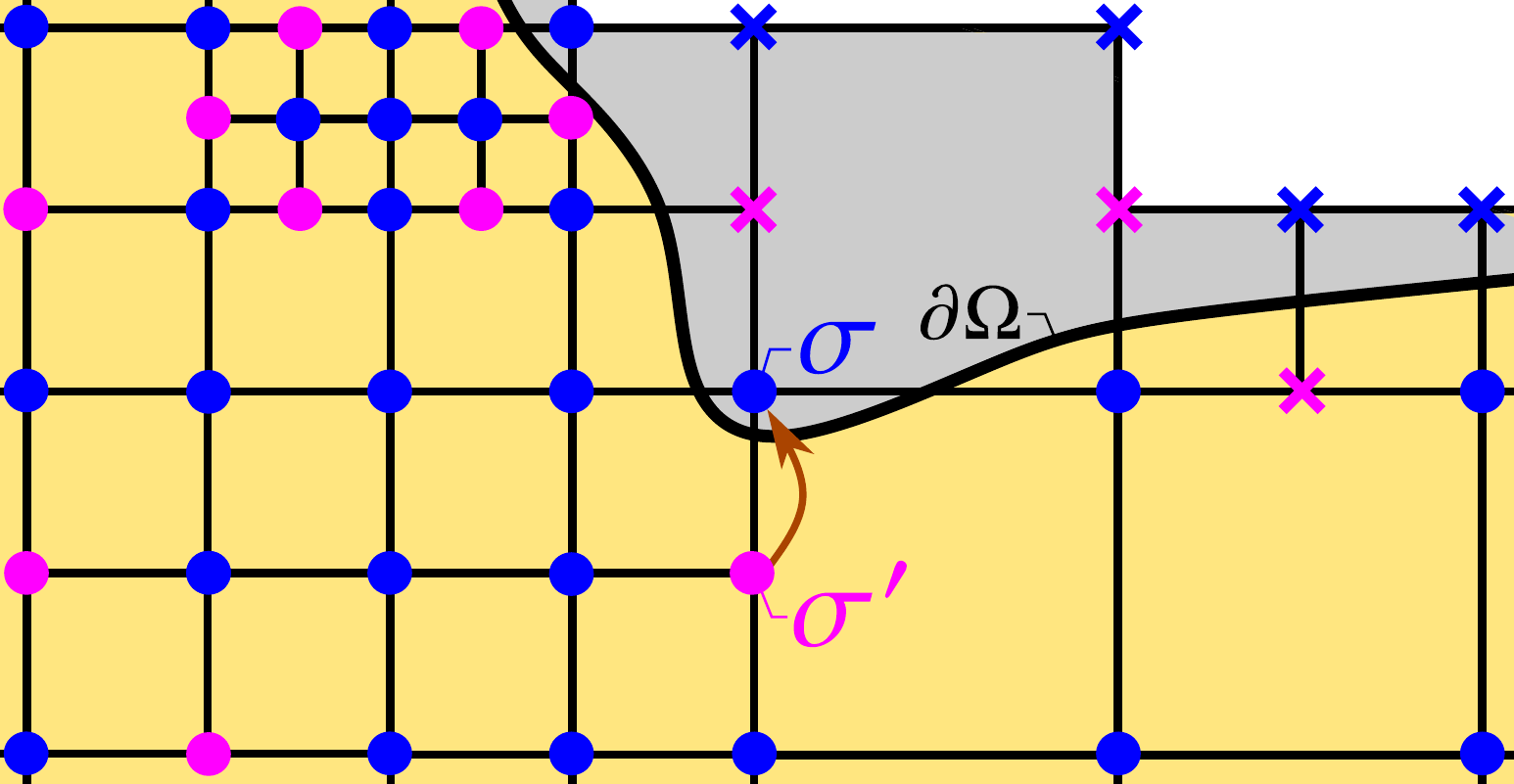}
    \caption{\myadded{Correct partition of $\Sigma$ for $\V_h^\ag$. Free \ac{dof}
    $\sigma$ is marked as well-posed, even though it is surrounded by ill-posed
    cells, because it constrains the well-posed hanging \ac{dof} $\sigma'$. Hence,
    it is well-posed due to Definition~\ref{def:free-well-posed} (ii).}}
    \label{fig:spaces_a}
  \end{subfigure} \quad
	\begin{subfigure}{0.48\textwidth}
    \includegraphics[width=0.95\textwidth]{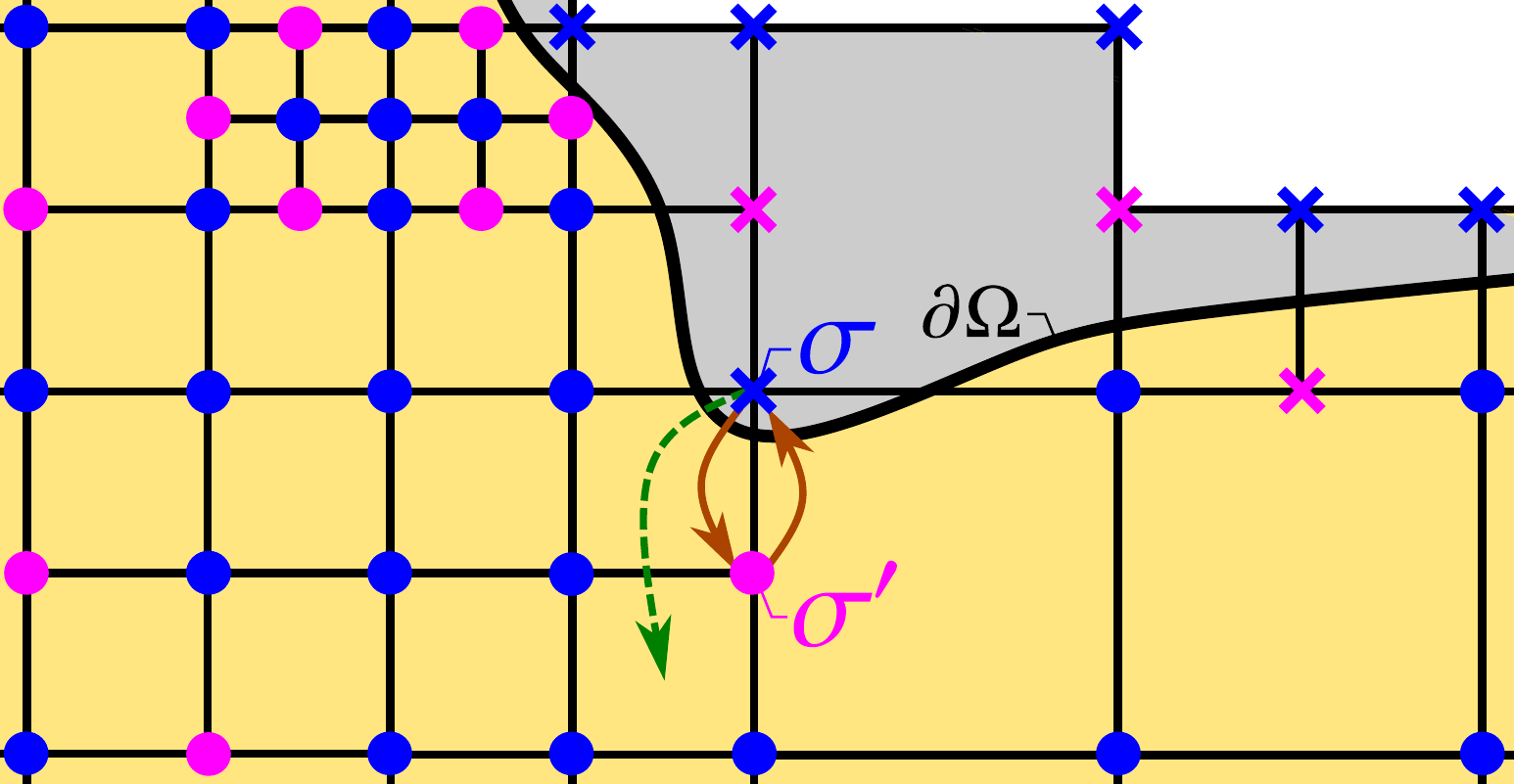}
    \caption{\myadded{Incorrect partition of $\Sigma$ for $\V_h^\ag$. Free \ac{dof}
    $\sigma$ is marked as ill-posed. If its root cell is pointed by the dashed
    arrow, then $\sigma'$ constrains $\sigma$. In parallel, $\sigma'$ is a hanging
    \ac{dof} constrained by $\sigma$. Thus, we have an unsolvable circular
    constraint dependency.}}
    \label{fig:spaces_b}
  \end{subfigure}
	\caption{\myadded{Classification of $\Sigma$ into $\{ \Sigma^{\wp,\F},
  \Sigma^{\wp,\H}, \Sigma^{\ip,\F}, \Sigma^{\ip,\H} \}$ on a portion of a mesh
  where $\eta_0 = 1$, i.e.~well/ill-posed cell iff interior/cut cell. The key to
  combine hanging and aggregation \ac{dof} constraints is to mark \acp{dof}
  meeting Definition~\ref{def:free-well-posed} (ii) as well-posed, as in
  (\textsc{a}). In this way, we circumvent any possible circular constraint
  dependencies, such as the one described in (\textsc{b}).}}
	\label{fig:spaces}
\end{figure}

This definition eliminates any situation with circular constraint dependencies, as
shown in Figure~\ref{fig:spaces_b} and detailed in
Appendix~\ref{appendix:agfespace}. Furthermore, it is backed by the numerical
analysis in Appendix~\ref{appendix:proofs}. We observe that $\Sigma^{\wp,\F}$
includes free \acp{dof} surrounded by ill-posed cells that constrain well-posed
hanging \acp{dof}, see Figure~\ref{fig:spaces_a}. If we let $\Sigma^{\ip,\Y}
\doteq \Sigma^{\Y} \setminus \Sigma^{\wp,\Y}$, then it becomes clear that
$\{\Sigma^{\wp,\F}, \Sigma^{\wp,\H}, \Sigma^{\ip,\F}, \Sigma^{\ip,\H} \}$ is a
partition of $\Sigma$. In contrast to free \acp{dof} in $\Sigma^{\wp,\F}$, any
$\sigma \in \Sigma^{\ip,\F}$ is liable to have arbitrarily small local support
and, following the Ag\ac{fem} rationale, must be constrained by \acp{dof} in
$\Sigma^{\wp,\F}$. It follows that, in the Ag\ac{fe} space, free \acp{dof} are
reduced to free well-posed \acp{dof}, i.e.~$\Sigma^{\wp,\F}$, whereas constrained
\acp{dof} are $\Sigma^\C \doteq \{\Sigma^{\wp,\H}, \Sigma^{\ip,\F},\Sigma^{\ip,\H}
\}$. In Appendix~\ref{appendix:agfespace} we show that any $\sigma \in \Sigma^\C$
can be resolved with \emph{direct} constraints, i.e.~linear constraints of the
same form as those in \eqref{eq:std-fe-space}, in terms of well-posed free
\acp{dof}, only. As a result, the (sequential) \emph{aggregated} or
\emph{ag.}~\ac{fe} space can be readily defined as
\begin{equation}\label{eq:ag-fe-space}
	\V_h^\ag \doteq \{ v_h \in \V_h^\ncf : v^\sigma_h = \sum_{\sigma' \in
	\M_\sigma} C_{\sigma\sigma'} v_h^{\sigma'} \ \text{for any} \ \sigma
	\in \Sigma^\C \},
\end{equation}
where $\M_\sigma$ is the set of \acp{dof} constraining $\sigma \in \Sigma^\C$ and
$C_{\sigma\sigma'}$ is the constraining coefficient for $\sigma' \in \M_\sigma$;
we refer to~\eqref{eq:ag-fe-masters} and~\eqref{eq:ag-fe-constraints} for their
respective full expressions. It is clear that $\V_h^\ag \subset \V_h^\std \subset
\V_h^\ncf$. For the sake of brevity, further aspects, such as the definition of
the resulting shape basis functions or finite element assembly operations are not
covered. In the end, constraints supplementing $\V_h^\ag$ are of multipoint linear
type, in the same way as those of $\V_h^\std$; they have been extensively covered
in the literature, see, e.g.~\cite{Verdugo2019,Shephard1984}. With regards to the
implementation, we remark that the set up of $\V_h^\ag$ can also potentially reuse
data structures and methods devoted to the construction of $\V_h^\std$ or, more
generally, any other \ac{fe} space endowed with linear algebraic constraints.}

\subsection{Distributed-memory extension}\label{sec:dm-impl}

After defining Ag\ac{fem} in a serial context, we \myadded{briefly} discuss its
extension to a \ac{dd} setup for implementation in a distributed-memory \myadded{computer}. We start by setting up the partition of the mesh into subdomains: Let
$\S$ be a partition of $\Omega^\art$ into subdomains obtained by the union of
cells in the background mesh $\T_h$, i.e.~for each cell $\cell \in \T_h$, there is
a subdomain $S \in \S$ such that $T \subset S$. We denote by $\T_h^{\L(S)}$ the
set of \emph{local} cells in subdomain $S \in \S$; naturally, $\{\T_h^{\L(S)}\}_{S
\in \S}$ forms a partition of $\T_h$, see~Figure~\ref{fig:dist-spaces}. We assume
that $\S$ is easy to generate. This is a reasonable assumption in our embedded
boundary context, where $\Omega^\art$ can be easily meshed with e.g.~tree-based
Cartesian grids, which are amenable to load-balanced partitions grounded on
space-filling curves~\cite{burstedde_p4est_2011}.

\begin{figure}[ht!]
  \centering
  \begin{subfigure}{\textwidth}
    \vspace{0.1cm}
    \centering
    \includegraphics[height=0.25cm]{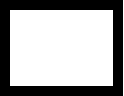} $\T_h^{\L(S_i)}$ \enskip
    \includegraphics[height=0.25cm]{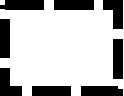} $\T_h^{\TG(S_i)}$ \enskip
    {\color[RGB]{255,230,128} \rule{10pt}{8pt}} $S_1$ \enskip
    {\color[RGB]{176,244,171} \rule{10pt}{8pt}} $S_2$ \enskip
    {\Large \color{blue} $\bullet$} $\Sigma_\L^{\wp,\F}$ \enskip
    {\Large \color[RGB]{255,0,255} $\bullet$} $\Sigma_\L^{\wp,\H}$ \enskip
    {\color{blue} $\boldsymbol{\times}$} $\Sigma_\L^{\ip,\F}$ \enskip
    {\color[RGB]{255,0,255} $\boldsymbol{\times}$} $\Sigma_\L^{\ip,\H}$
	\end{subfigure} \vspace{0.01cm}\\
	\begin{subfigure}{0.48\textwidth}
    \includegraphics[width=0.95\textwidth]{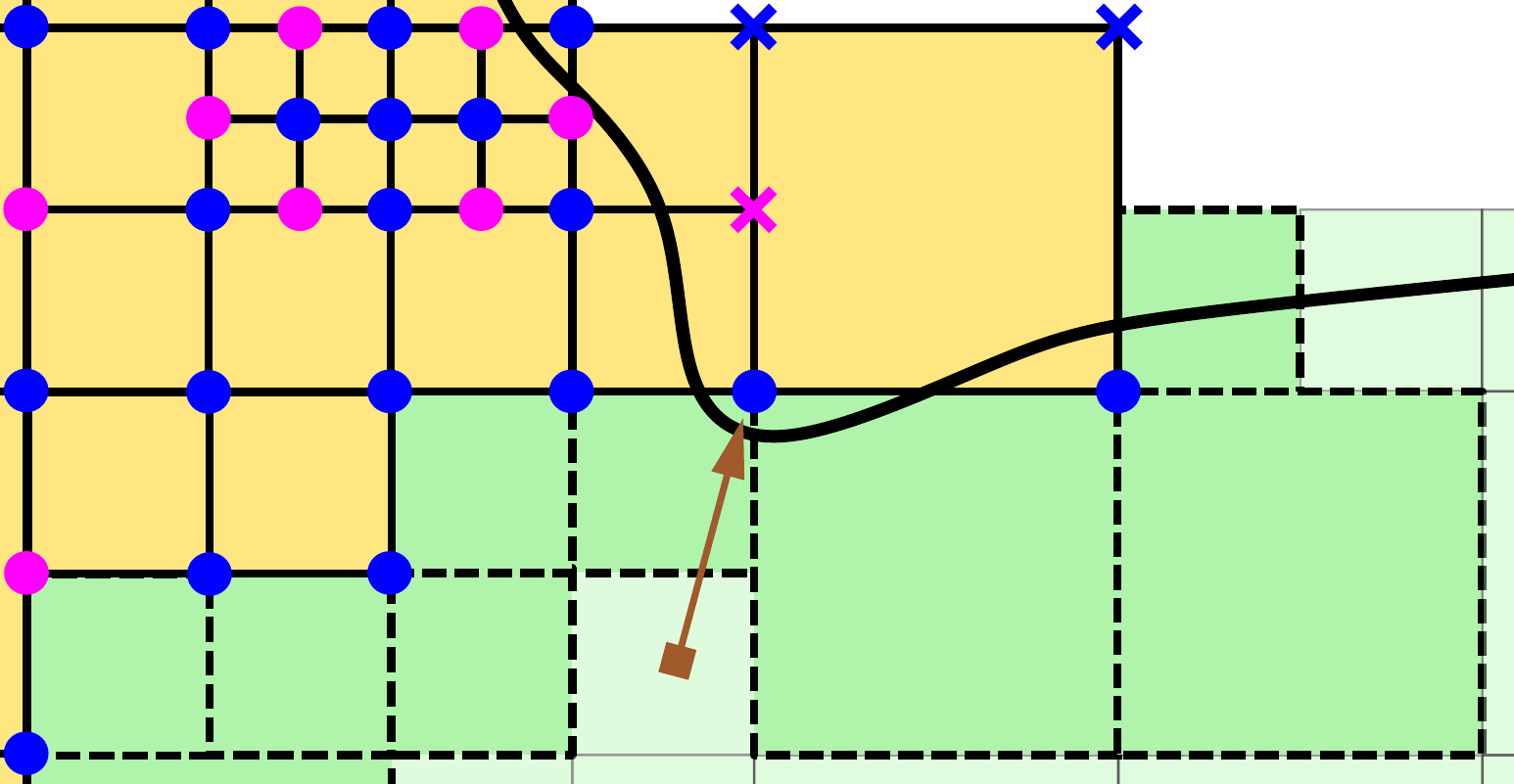}
    \caption{Partial view of $\T_h^{\L(S_1)} \cup \T_h^{\TG(S_1)}$.}
    \label{fig:dist-spaces-a}
  \end{subfigure} \quad
	\begin{subfigure}{0.48\textwidth}
    \includegraphics[width=0.95\textwidth]{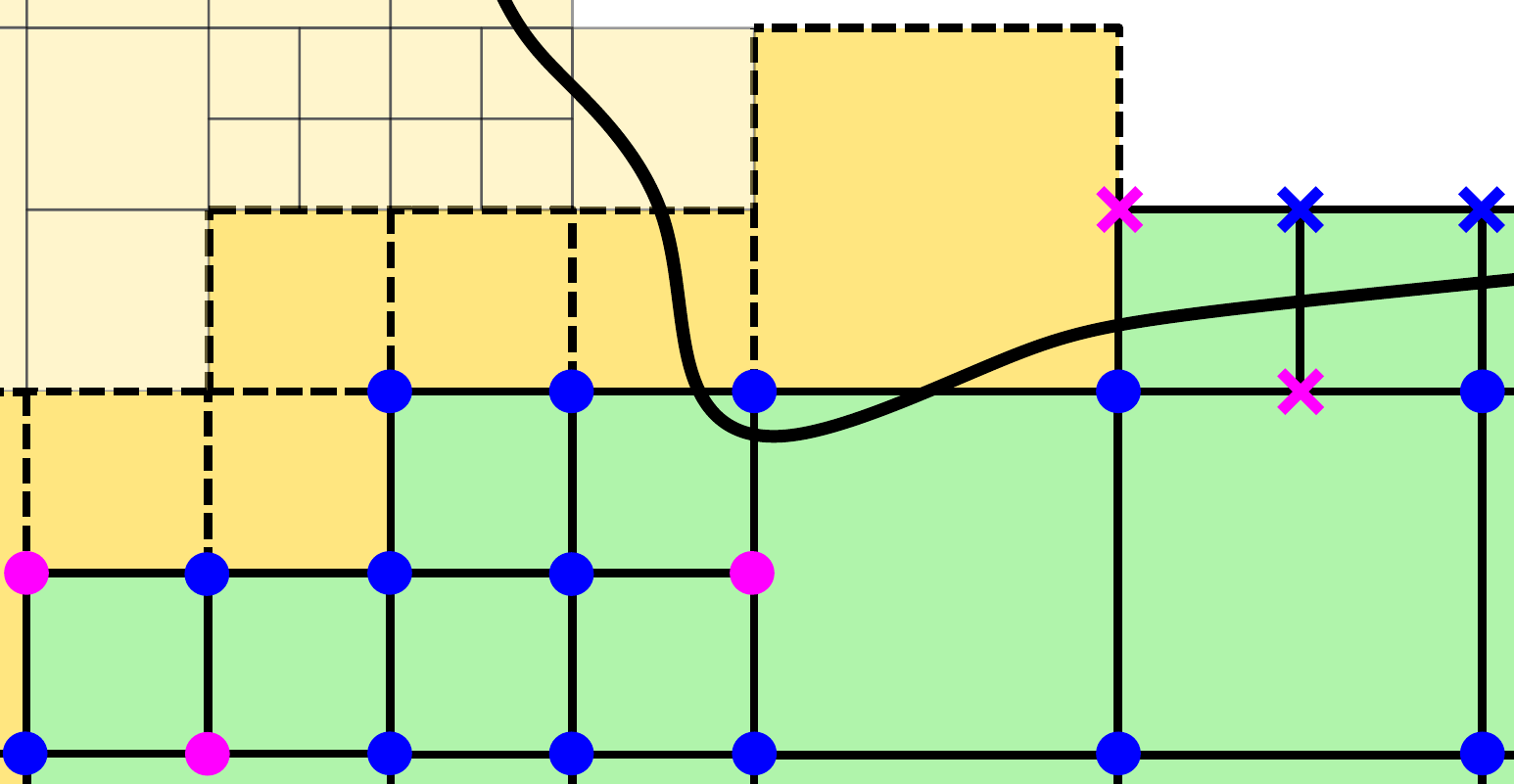}
    \caption{Partial view of $\T_h^{\L(S_2)} \cup \T_h^{\TG(S_2)}$.}
    \label{fig:dist-spaces-b}
  \end{subfigure}
  \caption{Classification of cells and \emph{local} \acp{dof} assuming a partition
  of the mesh portion in Figure~\ref{fig:spaces} into two subdomains. Light-shaded cells are not actually in the scope of $S_i$, $i
  = 1,2$. The arrow in
  \textsc{(a)} points at a free \ac{dof}, whose well-posed status can only be
  known by nearest-neighbour exchange, see Remark~\ref{rem:wp}; indeed, it has
  local support in a well-posed $S_2$-cell that is not in $\T_h^{\L(S_1)} \cup
  \T_h^{\TG(S_1)}$.}
	\label{fig:dist-spaces}
\end{figure}

In a parallel, distributed-memory environment, each subdomain $S$ is mapped to a
processor. Thus, each processor holds in memory a portion $\T_h^{\L(S)}$ of the
global mesh $\T_h$. Naturally, local \ac{fe} integration in $S$ is restricted to
cells in $\T_h^{\L(S)}$. However, to correctly perform the parallel \ac{fe}
analysis, the processor-local portion of $\T_h$ is usually extended with adjacent
off-processor cells, a.k.a.~\emph{ghost} cells. Ghost cells are essential to
generate the global \ac{dof} numbering, in particular, to glue together \acp{dof}
in processors that represent the same global \ac{dof}. For constrained spaces,
they are also needed to \emph{locally} solve \ac{dof} constraints that expand
beyond $\T_h^{\L(S)}$. Standard practice in large-scale \ac{fe} codes is to
constrain the ghost cell set to a single layer of ghost cells. Here, we refer to
them as the \emph{true} ghosts, given by $\T_h^{\TG(S)} \doteq \{ \cell \in \T_h
\setminus \T_h^{\L(S)} : \ \closure[1]{\cell} \cap \closure[1]{S} \neq \emptyset
\}$. This layer suffices to glue together global \acp{dof} among processors for
non-constrained spaces. However, it is not necessarily enough \myadded{to meet the
requirements} of constrained ones.

\myadded{Our goal in this section is to identify the minimum set of ghost cells that
we must attach to $\T_h^{\L(S)}$ in order to define the $S$-subdomain restriction
of $\V_h^\ag$ into $S$, which leads to the distributed version of $\V_h^\ag$.
Hereafter, all quantities refer to a given subdomain $S$, and we drop the subindex
$S$ unless needed for clarity. We assume our initial distributed-memory setting
considers processors holding $\T_h^\L \cup \T_h^\TG$ locally. Besides, each
processor holds a subdomain restriction of the root cell map $R$, such that it
leads to the same aggregates as the ones obtained with the sequential method;
see~\cite{Verdugo2019} for details on the distributed-memory cell aggregation
scheme. We let now $\Sigma_\L$ denote the set of ($S$-subdomain) local \acp{dof},
i.e.~those located in $\T_h^\L$. As in the sequential version of Ag\ac{fem}, we
have that $\{\Sigma_\L^{\wp,\F}, \Sigma_\L^{\wp,\H}, \Sigma_\L^{\ip,\F},
\Sigma_\L^{\ip,\H} \}$ forms a partition of $\Sigma_\L$, as shown
in~Figure~\ref{fig:dist-spaces}. Hence, $\Sigma_\L^\C \doteq \{\Sigma_\L^{\wp,\H},
\Sigma_\L^{\ip,\F}, \Sigma_\L^{\ip,\H} \}$ is the subset of ($S$-subdomain) local
constrained \acp{dof}.

For the sake of parallel performance and efficiency, we want to design our
parallel algorithms and data structures, concerning the setup of $\V_h^\ag$, in
such a way that they maximise local work, while minimising inter-processor
communication. In our context, this amounts to ensure that, given $\sigma \in
\Sigma_\L^\C$, we can resolve its full constraint dependency locally, in the scope
of the processor. In other words, any constraining \ac{dof} $\sigma' \in
\M_\sigma$ must be found in the processor-local portion of $\T_h$. In order to see
how we can fulfil this requirement with $\V_h^\ag$, we recover first two particular
cases, already addressed in previous literature:}
\begin{enumerate}
  \item \myadded{$\V_h^\ag$ does not have ill-posed \acp{dof}, i.e.~$\V_h^\ag \equiv
  \V_h^\std$: We recall, from Section~\ref{sec:std-fe-spaces}, that constraining
  \acp{dof} of hanging \acp{dof} are located on their coarser cells around. As all
  coarser cells around $\T_h^\L$ are in $\T_h^\L \cup \T_h^\TG$, all constraint
  dependencies of hanging \acp{dof} in $\T_h^\L$ do not expand beyond $\T_h^\L
  \cup \T_h^\TG$, i.e.~all hanging \ac{dof} constraints in $\T_h^\L$ can be
  resolved in $\T_h^\L \cup \T_h^\TG$, see~\cite[Proposition 4.1]{Badia2019b}.}
  \item \myadded{$\V_h^\ag$ does not have hanging \acp{dof}, i.e.~$\V_h^\ag$ is
  defined on a conforming mesh: In general, given $\sigma \in \Sigma_\L^{\ip,\F}$,
  the subdomain, where its root cell is located, is different from the current
  subdomain. In particular, the root cell can be outside $\T_h^\L \cup \T_h^\TG$,
  as in Figure~\ref{fig:remote-root-a}. This means that the constraint dependency
  of $\sigma$ propagates away from $\T_h^\L \cup \T_h^\TG$. In order to cancel the
  constraint associated to $\sigma$, we need to attach the missing root cell to
  $\T_h^\L \cup \T_h^\TG$~\cite{Verdugo2019}. We let $\T_h^\RG$ denote the set of
  all missing \emph{remote} root cells.}
\end{enumerate}

\begin{figure}[ht!]
  \centering
	\begin{subfigure}{0.5\textwidth}
	  \begin{small}
			{\color[RGB]{255,230,128} \rule{10pt}{8pt}} $S_3$\quad
			{\color[RGB]{204,204,204} \rule{10pt}{8pt}} $S_2$\quad
      {\color[RGB]{176,244,171} \rule{10pt}{8pt}} $S_1$\quad
      {\Large \color{blue} $\bullet$} Free\quad
      {\Large \color[RGB]{255,0,255} $\bullet$} Hanging\quad
      {\color{red} $\boldsymbol{\times}$} Ill-posed
	  \end{small}
	\end{subfigure} \\
	\begin{subfigure}{0.48\textwidth}
    \includegraphics[width=0.95\textwidth]{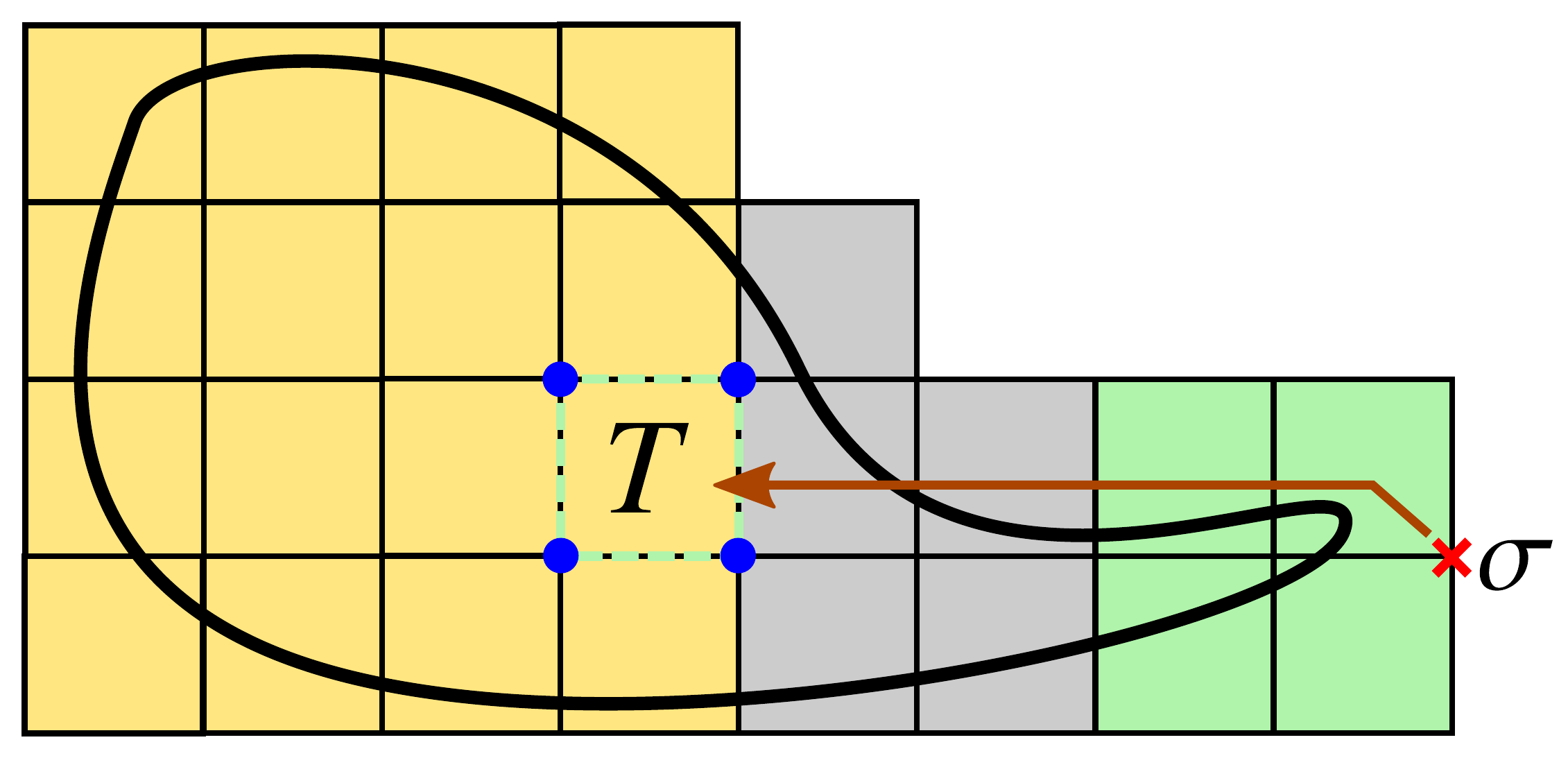}
    \caption{Conforming mesh. The value at $\sigma$ is constrained by the nodal
    values of root cell $T$, which belongs to $S_3$. As a result, to resolve the
    constraint, it is necessary to send to $S_1$ data from $T$.}
    \label{fig:remote-root-a}
  \end{subfigure} \quad
	\begin{subfigure}{0.48\textwidth}
    \includegraphics[width=0.95\textwidth]{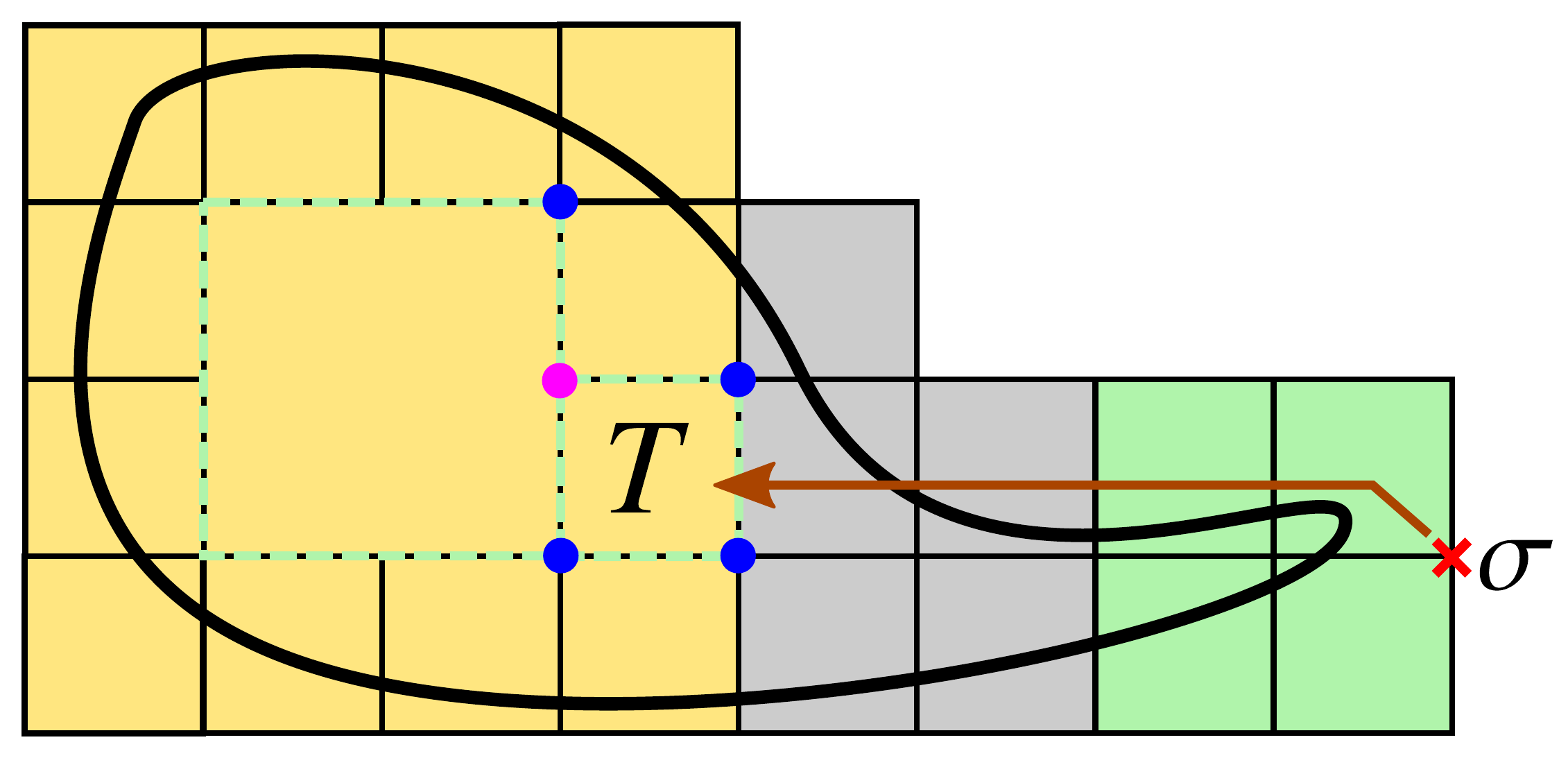}
    \caption{Non-conforming mesh. In contrast with \textsc{(a)}, $T$ touches a
    coarser cell. Therefore, to fully resolve the constraint it is necessary to
    send to $S_1$ data from $T$ \emph{and} its coarser neighbour.}
    \label{fig:remote-root-b}
	\end{subfigure}
  \caption{An ill-posed \ac{dof} $\sigma$ is constrained by a root cell $T$ in
  $S_3$, which is not a neighbour of $S_1$.}
	\label{fig:remote-root}
\end{figure}

\myadded{When both hanging and aggregation \ac{dof} constraints are present, the key
difference with respect to scenario~(2) is that root cells may be in contact with
coarser cells. This means that root cells may have hanging \acp{dof}, which are
cancelled by \acp{dof} located at their coarser cells around, as pointed out in
scenario (1). Therefore, apart from missing root cells, $\T_h^\RG$ must also
contain all missing coarser cells around the root cells relevant to $S$, see
Figure~\ref{fig:remote-root-b}. Specifically, if $\sigma$ is located in $\T_h^\L
\cup \T_h^\TG$, then both the root cell, to which is mapped to, and its coarser
cells around should be in the processor-local portion of $\T_h$. For uniform
meshes, algorithms in charge of importing data associated with these missing cells
are covered in~\cite{Verdugo2019}. They are grounded on the so-called parallel
direct and inverse path reconstruction schemes, which only need nearest-neighbour
communication patterns. For non-conforming meshes, it suffices to modify them such
that they account for non-conforming adjacency in the path reconstruction and also
import missing coarser cells around roots into the processor. We stress the fact
that this approach does not involve any mesh reconfiguration and repartition,
e.g.~it keeps the space-filling curve partition, which is essential for
performance purposes. It also has little impact on overall parallel performance
and scalability and it can be easily implemented in distributed-memory \ac{fe}
codes, as evidenced in Section \ref{sec:numericals}.

In conclusion, if we increment the ghost cell layer with $\T_h^\RG$ as explained
above, then we can correctly identify \emph{within the processor} all constraining
\acp{dof} that are located beyond $\T_h^\L$. It follows that we can resolve all
hanging and aggregation \ac{dof} constraints in $\T_h^\L$ \emph{without any extra
interprocessor communication}, thus achieving our parallel performance target of
maximizing local work, while minimising inter-processor communication. Another
relevant outcome is that we can accommodate to $\Sigma_\L$ the same rationale
detailed in Appendix~\ref{appendix:agfespace} to solve the mixed constraints; all
subsequent steps, to derive expressions for (unified aggregation and hanging) sets
of master \acp{dof} and constraining coefficients, follow exactly the sequential
Ag\ac{fem} counterpart, using the corresponding subdomain definitions. It leads to
the definition of the distributed version of $\V_h^\ag$ and will not be reproduced
here to keep the presentation short.

\begin{remark}
  We observe the following detail: In order to determine whether a \emph{free}
  \ac{dof} $\sigma \in \Sigma_\L$ is well- or ill-posed, i.e. whether $\sigma \in
  \Sigma_\L^{\wp,\F}$ or $\sigma \in \Sigma_\L^{\ip,\F}$, the processor needs to
  know the set of cells, where $\sigma$ has local support. However, a processor
  may not know the full set, based solely on local information, i.e.~$\T_h^\L \cup
  \T_h^\TG$. This scenario is illustrated in Figure~\ref{fig:dist-spaces-a}.
  Fortunately, to let the processor know the complete set of cells, it suffices to
  combine the local information with a single nearest-neighbour communication.
  This result is backed by~\cite[Proposition 4.3]{Badia2019b}, where an analogous
  issue is described, when trying to recover all the processors where $\sigma$ has
  local support, instead of its well- or ill-posed cell status.
  % We observe the following detail: In order to define the partition
  % $\{\Sigma_\L^{\wp,\F}, \Sigma_\L^{\wp,\H}, \Sigma_\L^{\ip,\F},
  % \Sigma_\L^{\ip,\H} \}$, we need to locally determine whether a \ac{dof} is well-
  % or ill-posed. To this end, the processor needs to have in its scope the list of
  % cells where the \ac{dof} has local support. The complete list is generally not
  % included in $\T_h^\L \cup \T_h^\TG$, see Figure~\ref{fig:dist-spaces-a}.
  % However, as mentioned above in the case $\V_h^\ag \equiv \V_h^\std$, all
  % \acp{dof} of $V_h^\std$ with local support in $\T_h^\L$ are located in $\T_h^\L
  % \cup \T_h^\TG$. As a result, a \ac{dof} that has local support in a well-posed
  % cell of a given subdomain $S$, either via (1) or (2) of
  % Definition~\ref{def:free-well-posed}, will be necessarily detected by the
  % corresponding processor. It follows that, there is always at least one neighbour
  % processor, that is able to correctly identify the \ac{dof} as well-posed with
  % processor-local information. Hence, any \ac{dof}, lacking local access to the
  % complete list of cells, can determine, with standard nearest-neighbour
  % communication, whether it has local support on a well-posed cell or not.
  \label{rem:wp}
\end{remark}

}

\section{Numerical experiments}\label{sec:numericals}

Our purpose in this section is to assess numerically the behaviour of
$h$-Ag\ac{fem}. We start with a description of the model problem in
Section~\ref{sec:model-problem}. We consider a Poisson equation with
non-homogeneous Dirichlet boundary conditions and a Nitsche-type variational
form. We introduce next the experimental benchmarks in
Section~\ref{sec:experimental-setup}, composed of several manufactured problems
defined in a set of complex geometries. After this, we jump into the numerical
experiments themselves. We describe and discuss the results of two sets of
experiments, namely convergence tests in Section~\ref{sec:convergence-tests} and
weak-scaling tests in Section~\ref{sec:weak-scaling}.

\subsection{Model problem}\label{sec:model-problem}

Numerical examples consider the Poisson equation with non-homogeneous Dirichlet
boundary conditions. After scaling with the diffusion term, the equation reads:
\emph{find} $u \in H^1(\Omega)$ \emph{such that}
\begin{equation}\label{eq:PoissonEq}
	-\Delta u = f,  \quad \text{in} \ \Omega, \qquad u = g,  \quad \text{on} \
	\Gamma_\mathrm{D} \doteq \partial \Omega,
\end{equation}
where  $f\in H^{-1}(\Omega)$ is the source term and $g\in H^{1/2}(\partial
\Omega)$ is the prescribed value on the Dirichlet boundary. In the numerical
tests, we study both $\V_h^\std$ and $\V_h^\ag$, see
Sections~\ref{sec:std-fe-spaces} and~\ref{sec:ag-fe-spaces}, as possible choices
of $\V_h^{\rm x}$. As stated in Section~\ref{sec:std-fe-spaces}, we consider
weak imposition of boundary conditions, since unfitted methods do not easily
accommodate prescribed values in a strong sense. As usual in the embedded
boundary community, we resort to Nitsche's method to circumvent this
problem~\cite{Badia2018,Schillinger2015,burman_cutfem_2015}. We observe that
this approach provides a consistent numerical scheme with optimal convergence
rates (even for high-order \acp{fe}). According to this, we approximate
\eqref{eq:PoissonEq} with the variational formulation: \emph{find}
$u_h \in \V_h^{\rm x}$ \emph{such that} $\mathrm{a}(u_h,v_h)=\mathrm{b}(v_h)$
\emph{for all} $v_h \in \V_h^{\rm x}$, with
\begin{equation}\label{eq:weak-PoissonEq}
	\begin{array}{l}
		\displaystyle \mathrm{a}(u_h,v_h) \doteq \int_{\Omega} \nabla u_h \cdot
		\nabla v_h \mathrm{\ d}\Omega + \int_{\partial \Omega} \left( \tau u_h v_h  - u_h
		\left( \boldsymbol{n} \cdot \nabla v_h \right ) - v_h \left( \boldsymbol{n}
		\cdot \nabla u_h \right) \right) \mathrm{\ d}{\Gamma}, \quad \text{and} \\
		\displaystyle \mathrm{b}(v_h) \doteq \int_{\Omega} v_h f \mathrm{\ d}\Omega
		+ \int_{\partial \Omega} \left( \tau v_h g - \left( \boldsymbol{n} \cdot \nabla v_h
		\right ) g \right) \mathrm{\ d}\Gamma,
	\end{array}
\end{equation}
with $\boldsymbol{n}$ being the outward unit normal on $\partial \Omega$. We
note that forms $\mathrm{a}(\cdot,\cdot)$ and $\mathrm{b}(\cdot)$ include the
usual terms, resulting from the integration by parts of~\eqref{eq:PoissonEq},
plus additional terms associated with the weak imposition of Dirichlet boundary
conditions with Nitsche's method. %~\cite{Becker2002,nitsche_uber_2013}. 
For further details, we refer to Appendix~\ref{appendix:proofs-wp}, where we prove
well-posedness of Problem~\eqref{eq:weak-PoissonEq} considering $\V_h^\ag$ as the
discretisation space.

Coefficient $\tau > 0$ denotes a mesh-dependent parameter that has to be large
enough to ensure coercivity of $\mathrm{a}(\cdot,\cdot)$. It is prescribed with
the same rationale given in~\cite[Section 4.2]{Verdugo2019}. For $\V_h^\ag$, we
have that $\tau = {\beta^\ag}{h_{\cell}^{-1}}$ for all $\cell \in \T_h^\ip$, where
$h_{\cell}$ is the cell characteristic size and $\beta^\ag$ is a user-defined
constant parameter. Numerical experiments take $\beta^\ag = 25.0$; this value is
enough for having a well-posed problem in all cases considered in
Section~\ref{sec:convergence-tests}. When using $\V_h^\std$, the value in a
generic ill-posed cell takes the form
\begin{equation}\label{eq:tau_std}
	\tau = \beta^\std \lambda_\cell^{\max}, \ \text{for all} \ \cell \in \T_h^\ip,
\end{equation}
where $\beta^\std = 2.0$ and $\lambda_\cell^{\max}$ is the maximum eigenvalue of
the generalised eigenvalue problem: \textit{find} $\mu_\cell \in \V_h^\std|_\cell$
\textit{and} $\lambda_\cell \in \mathbb{R}$ \textit{such that}
\[
	\int_{\cell \cap \Omega} \nabla \mu_\cell \cdot \nabla \xi_\cell \mathrm{\ d}
	\Omega = \lambda_\cell \int_{\cell \cap \partial \Omega} (\nabla \mu_\cell \cdot
	\boldsymbol{n} ) (\nabla \xi_\cell \cdot \boldsymbol{n} ) \mathrm{\ d} \Gamma,
	\enskip \text{for all} \ \xi_\cell \in \V_h^\std|_\cell, \enskip \text{for all}
	\ \cell \in \T_h^\ip.
\]
We notice that $\tau$ computed as in \eqref{eq:tau_std} can be
arbitrarily large, as the measure of the cut $\Omega \cap \cell$, $\cell \in
\T_h^\ip$, tends to zero. This means that, in contrast with $\V_h^\ag$, strongly
ill-conditioned systems of linear equations may arise with $\V_h^\std$, depending
on the position of the cut.

\subsection{Experimental setup}\label{sec:experimental-setup}

The model problem is defined on five different 2D and 3D non-trivial domains
shown in Figure~\ref{fig:geometries}: (a) a planar ``pacman" shape, (b) a
popcorn flake with a wedge removed, (c) a hollow block, (d) a 3-by-3 array of
(c) and (e) a spiral. These geometries appear often in the literature to
study robustness and performance of unfitted \ac{fe} methods (see,
e.g.~\cite{burman_cutfem_2015,Badia2018}). The artificial domain $\Omega^\art$,
on top of which the mesh is generated, is the cuboid $\left[ -1,1 \right]^d$, $d
= 2,3$, for cases (a-c), $\left[ 0,1 \right]^3$ for case (d) and $\left[ -1,1
\right]^2 \times \left[ 0,2 \right]$ for case (e).

\begin{figure}[!h]
  \centering
  \begin{subfigure}[t]{0.30\textwidth}
    \includegraphics[width=\textwidth]{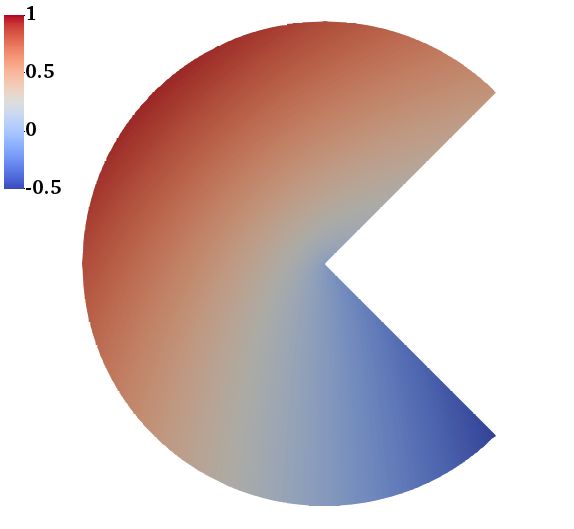}
    \caption{Pacman}
  \end{subfigure}
  \begin{subfigure}[t]{0.30\textwidth}
    \includegraphics[width=\textwidth]{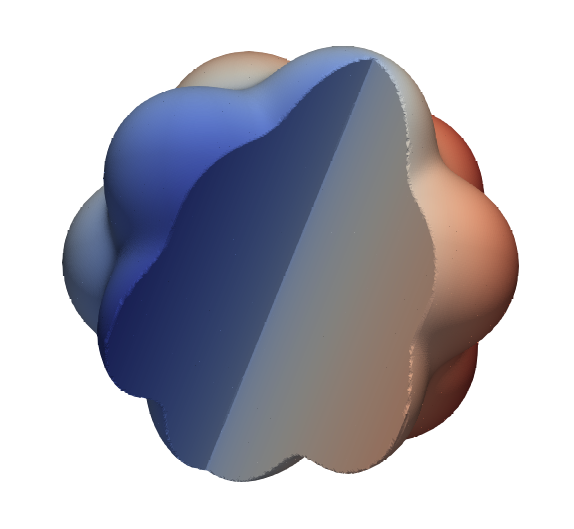}
    \caption{Popcorn}
  \end{subfigure} \\
  \begin{subfigure}[t]{0.30\textwidth}
    \includegraphics[width=\textwidth]{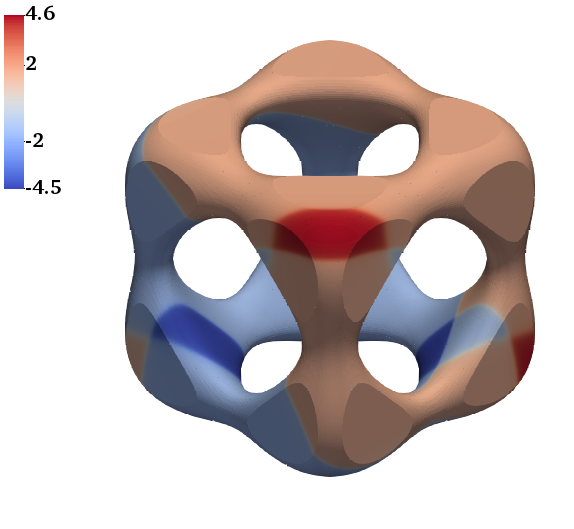}
    \caption{Hollow block}
  \end{subfigure}
  \begin{subfigure}[t]{0.30\textwidth}
    \includegraphics[width=\textwidth]{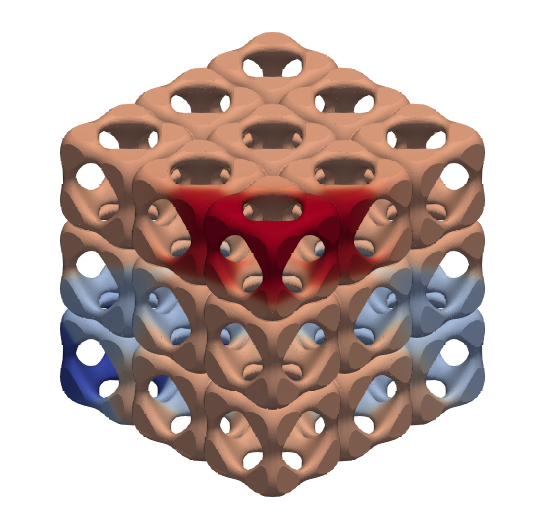}
    \caption{H. b. array}
  \end{subfigure}
  \begin{subfigure}[t]{0.30\textwidth}
    \includegraphics[width=\textwidth]{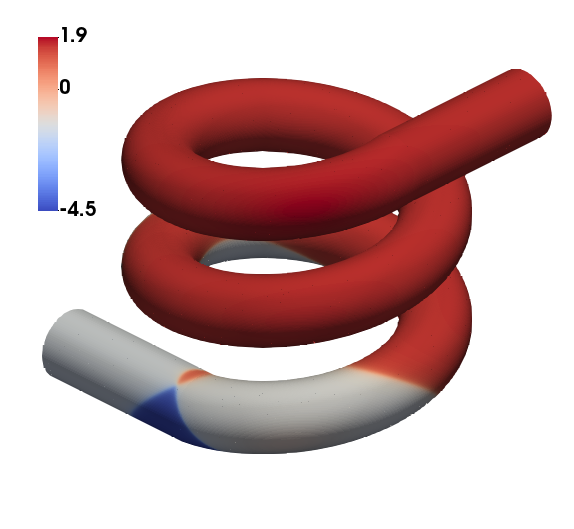}
    \caption{Spiral}
  \end{subfigure}
  \caption{Geometries and numerical solution to the problems studied in the
  examples. (a-b) consider the Fichera corner problem in
  \eqref{eq:fichera}, whereas (c-e) the multiple ``shock" in
  \eqref{eq:shock}.}
  \label{fig:geometries}
\end{figure}

As illustrated in Figure~\ref{fig:geometries}, for geometries (a-b), the source
term and boundary conditions of the Poisson equation are defined, such that the
\ac{pde} has the exact solution
\begin{equation}\label{eq:fichera}
	\begin{aligned}
		& u(r,\theta) = r ^ \alpha \sin \alpha \theta, \ r = \sqrt{ x^2 + y^2 }, \
		\theta = \arctan y/x, \ \alpha = 2 / 3, \\ &
		(x,y) \in \Omega \subset \mathbb{R}^2, \ z=0 \text{ in 2D}, \qquad
		(x,y,z) \in \Omega \subset \mathbb{R}^3 \text{ in 3D}.
	\end{aligned}
\end{equation}
The same applies to (c-e), but seeking a different exact solution given by
\begin{equation}\label{eq:shock}
	\begin{aligned}
		u(r) &= \sum_{i=1,3} \arctan \tau^i ( r - r_0^i ), \\ r = || \x
		-\x_0^i ||_2, \ \x &=
		(x,y,z)\in\Omega\subset\mathbb{R}^3,
	\end{aligned}
\end{equation}
where $\|\cdot\|_2$ denotes the Euclidean norm and
\[\def\arraystretch{1.2}
	\begin{array}{lll}
		\tau^1 = 60, & (x_0^1, y_0^1, z_0^1) = (-1,-1,1), & r_0^1 = 2.5, \\ \tau^2
		= 80, & (x_0^2, y_0^2, z_0^2) = (1,1,-1), & r_0^2 = 1.75, \text{ and } \\
		\tau^3 = 120, & (x_0^3, y_0^3, z_0^3) = (0.5,-3,-3), & r_0^3 = 4.5.
	\end{array}
\]
Problems \eqref{eq:fichera} and \eqref{eq:shock} correspond to adapted versions
of two classical hp-\ac{fem} benchmarks, namely, the Fichera corner and ``shock"
problems (see, e.g.~\cite{demkowicz2006computing}). Derivatives of solution $u$
in \eqref{eq:fichera} are singular at the $r = 0$ axis; in particular, $u \in
H^{1+\frac{2}{3}}(\Omega)$. Recalling \textit{a priori} error estimates, it is
well known that the rate of convergence of the standard \ac{fe} method with
uniform $h$-refinements, when applied to this case, is bounded by regularity
only. Specifically, the energy-norm error\footnote{Recall that, for the
unit-diffusion Poisson equation, the energy norm is given by $\| u \|_{a}^2 =
\int_{\Omega} \left| \nabla u \right|^2 \mathrm{d} \Omega$.} satisfies $\| u -
u_h \|_{a} \leq C h^{-2/3} \| u \|_{H^{1+\frac{2}{3}}(\Omega)}$. However, by
combining \emph{a posteriori} error estimation and $h$-adaptive refinements,
optimal rates of convergence can be restored~\cite{demkowicz2006computing}. On
the other hand, problem \eqref{eq:shock} is characterised by three intersecting
shocks. The solution to the problem is smooth, but it sharply varies in the
neighbourhood of the shocks. In this case, $h$-adaptive standard \ac{fem} does
not affect rates of convergence, but potentially yields meshes that minimise the
number of cells required to achieve a given discretisation error.

The variety of shapes and benchmarks considered here aims to show (i) the
capability of $h$-Ag\ac{fem} of retaining the same benefits $h$-adaptivity
brings, when combined with standard \ac{fem}, while being able (ii) to deal with
complex and diverse 2D and 3D domains in a robust manner and (iii) to yield
remarkable parallel efficiency with state-of-the-art \textit{out-of-the-box}
scalable iterative linear solvers for symmetric positive definite matrices. In
order to do this, we confront numerical results obtained with $\V_h^\ag$ against
those of $\V_h^\std$. In the plots, the two spaces are labelled as
\textit{aggregated} (or \textit{ag.}) and \textit{standard} (or \textit{std.}).
All examples run on \emph{background Cartesian grids}, with standard isotropic
1:4 (2D) and 1:8 (3D) refinement rules; they are commonly referred to as quad-
or octrees in 2D or 3D, resp. Apart from that, continuous FE spaces composed of
first order Lagrangian finite elements are employed. 

In the numerical experiments, we perform convergence tests using three different
remeshing strategies (uniform refinements, \ac{lb} and
\ac{ob}~\cite{li1995theoretical,onate1993study}) in a parallel,
distributed-memory environment. We also assess robustness to cut location and
assess sensitivity to the well-posedness threshold $\eta_0$. Finally, we perform
a weak scalability analysis for some selected ag.\ cases; Table~\ref{tab:params}
summarises the main parameters and computational strategies used in the
numerical examples.

We carry out the numerical experiments at the Marenostrum-IV (MN-IV)
supercomputer, hosted by the Barcelona Supercomputing Centre. Concerning the
software, an MPI-parallel implementation of the $h$-Ag\ac{fem} method is
available at \fempar{}~\cite{badia-fempar}. \fempar{} is linked
against \p4est{} v2.2~\cite{burstedde_p4est_2011}, as the octree Cartesian grid
manipulation engine, and \petsc{} v3.11.1~\cite{petsc-user-ref}
distributed-memory linear algebra data structures and solvers. To show that
$\V_h^\ag$ leads to systems, that are amenable to well established scalable
linear solvers for standard \ac{fe} analysis on body-fitted meshes, we resort to
the broad suite of linear solvers available in the \petsc{}
library~\cite{petsc-user-ref}. In particular, we use a \ac{cgm} method,
preconditioned by a smoothed-aggregation \ac{amg} scheme called
\gamg{}. %~\cite{GAMGweb}. 
The preconditioner is set up in favour of reducing, as
much as possible, the deviation from its default configuration, as
in~\cite{Verdugo2019}. We do this in order to show that Ag\ac{fem} blends well
with common \ac{amg} solvers, whereas std.\ unfitted \ac{fem} does not. Both
solver and preconditioner are readily available through the Krylov Methods
\texttt{KSP} module of \petsc{}. In order to advance convergence tests down to
low global energy-norm error values, without being polluted by the linear solver
accuracy, convergence of \gamg{} is declared when $\| \mathbf{r} \|_2/ \|
\mathbf{b} \|_2 < 10^{-9}$ within the first $500$ iterations, where
$\mathbf{r}\doteq \mathbf{b}-\mathbf{A}\mathbf{x}^{\rm cg}$ is the
unpreconditioned residual.

\begin{table}[ht!]
	\centering
	\begin{small}
		\begin{tabular}{ll}
			\toprule
			Description & Considered methods/values \\
			\midrule
			Model problem & Poisson equation (Nitsche's formulation)
			\vspace{0.12cm} \\
			Problem geometry & 2D: Pacman shape; 3D: Popcorn flake, \\
			& Hollow block, Hollow block array and spiral \vspace{0.12cm} \\
			\ac{amr} benchmark & Fichera corner and multiple-shock
			problem~\cite{demkowicz2006computing} \vspace{0.12cm} \\
			Remeshing strategy & Uniform, Li and Bettess~\cite{li1995theoretical},
			and O\~{n}ate and Bugeda~\cite{onate1993study} \vspace{0.12cm} \\
      Experimental computer environment & Parallel (distributed-memory) 
      \vspace{0.12cm} \\
      Mesh topology & Single quad- or octree \vspace{0.12cm} \\
			Parallel mesh generation and partitioning tool & \p4est{}
			library~\cite{burstedde_p4est_2011} \vspace{0.12cm} \\
			Well-posed cut cell criterion & $\eta_0 = 0.25$ \vspace{0.12cm} \\
			\ac{fe} spaces & Aggregated $\V_h^\ag$ and standard $\V_h^\std$
			\vspace{0.12cm} \\
			Cell type & Hexahedral cells \vspace{0.12cm} \\
			Interpolation & Piece-wise bi/trilinear shape functions \vspace{0.12cm} \\
      Linear solver & Preconditioned conjugate gradients \vspace{0.12cm} \\
			Parallel preconditioner & Smoothed-aggregation \gamg{}%~\cite{GAMGweb}
			\vspace{0.12cm} \\
			\gamg{} stopping criterion & $\| \mathbf{r} \|_2/ \| \mathbf{b} \|_2 <
			10^{-9}$ \vspace{0.12cm} \\
			Coef. in Nitsche's penalty term for $\V_h^\ag$ & $\beta = 25.0$ \\
			\bottomrule
		\end{tabular}
	\end{small}
	\caption{Summary of the main parameters and computational strategies used in
	the numerical examples.}
	\label{tab:params}
\end{table}

\subsection{Convergence tests}\label{sec:convergence-tests}

Convergence tests in relative energy-norm error are carried out with three
different mesh refinement strategies. The first one is uniform $h$-refinements,
in pursuance of both exposing the behaviour of Ag\ac{fem}, in absence of hanging
node constraints, and the limited regularity of the Fichera corner problem. The
remaining two are error-driven; they are distinguished by different optimality
criteria on the elemental error indicator $\gamma_\cell$, for any $\cell \in
\T_h$. It is not in the scope of this work to design \emph{a posteriori} error
estimation techniques for Ag\ac{fem}, although there are some works with other
unfitted \ac{fe} methods that explore this question~\cite{burman2019posteriori}.
Hence, since the target problems have known analytical solution, $\gamma_\cell$
is taken as the energy norm of the local true error $e = u - u_h$, that is,
\[
	\gamma_\cell = \| e \|_{\restrict{a}{\cell\cap\Omega}} = \| u - u_h
	\|_{\restrict{a}{\cell\cap\Omega}}, \quad \cell \in \T_h.
\]
Error-driven mesh adaptation seeks an optimal mesh with an iterative procedure.
In the examples below, optimality is declared when the global \textit{absolute}
discretisation error, measured in energy norm, $\| e \|_{a}$ is below a
prescribed quantity $\gamma$, i.e.\
\begin{equation}
	\| e \|_{a} \le \gamma, \quad \gamma > 0.
	\label{eq:acceptability_criterion}
\end{equation}
\eqref{eq:acceptability_criterion} is referred to as the
\textit{acceptability criterion}. % Usually done with the relative error,
% but the results and the discussion is the same, regardless of error type.

The process starts with an initial guess of the optimal mesh. After finding the
approximate solution and the exact cell-wise error distribution, a new mesh is
defined with a remeshing strategy. This step consists in comparing each
$\gamma_\cell$ to a given threshold, commonly known as the \textit{optimality
criterion}, which is later defined. Depending on the result of the comparison, a
different remeshing flag is assigned to the cell. If $\gamma_\cell$ is above the
threshold, $\cell$ is marked for refinement. Otherwise, $\cell$ is left unmarked
or, optionally, marked for coarsening, when $\gamma_\cell$ falls well below the
threshold. Following this, the mesh is transformed, according to the cell-wise
remeshing flags, and partitioned. Next, a new \ac{fe} space is created, by
distributing \acp{dof} on top of the new mesh and computing the nonconforming
\ac{dof} constraints and, if using $\V_h^\ag$, also the ill-posed \ac{dof}
constraints. After \ac{fe} integration and assembly, the resulting linear system
is solved and the cell-wise error distribution is updated. If the current mesh
complies with the acceptability criterion of \eqref{eq:acceptability_criterion},
the process is stopped, otherwise it goes back to the application of the
optimality criterion. % This high-level \ac{fe} simulation loop is described 
% in Alg.~\add{}.

As mentioned before, two different optimality criteria (thresholds for
refinement) are studied%, see e.g.~\cite{diez1999unified} for a detailed review. 
The first one, the \ac{lb}~\cite{li1995notes,li1995theoretical}
criterion, establishes that the error distribution in an optimal mesh (denoted
with *) is uniform, that is
\begin{equation}
	\| e^* \|_{ \cell^*\cap\Omega } = \frac{\gamma}{\sqrt{M^*}}, \quad \cell^* =
	1,\ldots,M^*,
	\label{eq:li_and_bettess}
\end{equation}
where $M^*$ is the number of cells in the optimal mesh. At each mesh adaptation
step, this quantity is estimated as
\begin{equation}
	M^* = \gamma^{-d/m} \left( \sum_{\cell=1}^{M} \| e \|_{\cell}^{d/(m+d/2)}
	\right)^{(m+d/2)/m},
	\label{eq:estimate_mesh_size}
\end{equation}
with $d$ the space dimension, $m$ the degree of the interpolation (in
second-order elliptic problems) and $M$ the number of cells of the current
iteration. On the other hand, the \ac{ob}~\cite{onate1993study} criterion
considers that the distribution of error \textit{density} in an optimal mesh is
uniform, that is
\begin{equation}
	\| e^* \|_{ \cell^*\cap\Omega } = \frac{\gamma \Omega_{\cell^*\cap\Omega}^{1/2}
	}{
	\Omega^{1/2} }, \quad \cell^* = 1,\ldots,M^*,
	\label{eq:onate_and_bugeda}
\end{equation}
where $\Omega$ is the measure of the domain and $\Omega_{\cell^*\cap\Omega}$ is
the measure of $\cell^*\cap\Omega$. While the former criterion has
been proved~\cite{li1995notes} to provide standard body-fitted \ac{fe} meshes
satisfying \eqref{eq:acceptability_criterion} with the least number of
elements, the latter scales the threshold in terms of the size of the
(ill-posed) cell. One of the goals of the following experiments is to see how
both strategies perform in the context of unfitted \acp{fe}. Note that, with
respect to standard body-fitted \ac{fem}, both remeshing strategies are almost
applied verbatim to an unfitted \ac{fe} setting; the only difference being that
the local quantities in cut cells are computed in the interior part only, in the
same way as for the local integration of the weak form stated in
\eqref{eq:weak-PoissonEq}.

Convergence tests with uniform $h$-refinements follow the usual procedure,
whereas error-driven tests are controlled with a finite sequence of decreasing
error objectives $\gamma_i$, $i > 1$. For each $i > 1$, the iterative procedure
described above is carried out to find the mesh that complies with the
acceptability criterion of \eqref{eq:acceptability_criterion} with
$\gamma = \gamma_i$. If subscript $\gamma_i$ refers to the quantities obtained at the
last mesh iteration, at the end of the procedure we can extract the pair
\[ \left( \frac{\| e \|_{a,\gamma_i}}{\| u
\|_{a,\gamma_i}},N_{\mathrm{dofs}}^{\gamma_i} \right),
\]
that is, a point of the convergence test curve. Figure~\ref{fig:meshes} depicts
some meshes found with this iterative procedure, using the \ac{lb}
acceptability criterion.

\begin{figure}[!h]
  \centering
%  \begin{subfigure}[t]{0.24\textwidth}
%    \includegraphics[width=\textwidth]{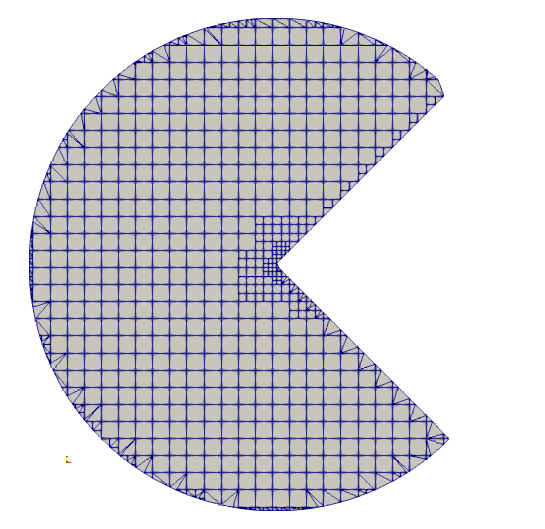}
%    \caption{$4.7 \cdot 10^{-2}$}
%  \end{subfigure}
%  \begin{subfigure}[t]{0.19\textwidth}
%    \includegraphics[width=\textwidth]{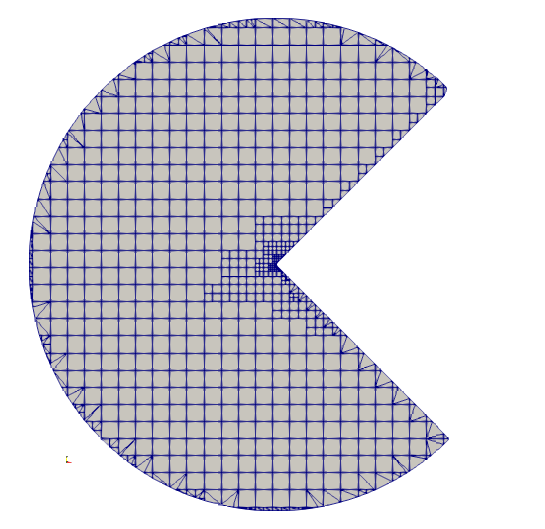}
%    \caption{$3.0 \cdot 10^{-2}$}
%  \end{subfigure}
%  \begin{subfigure}[t]{0.24\textwidth}
%    \includegraphics[width=\textwidth]{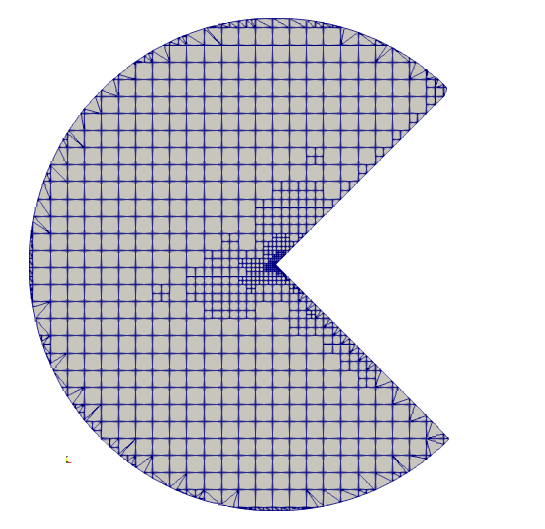}
%    \caption{$1.8 \cdot 10^{-2}$}
%  \end{subfigure}
%  \begin{subfigure}[t]{0.24\textwidth}
%    \includegraphics[width=\textwidth]{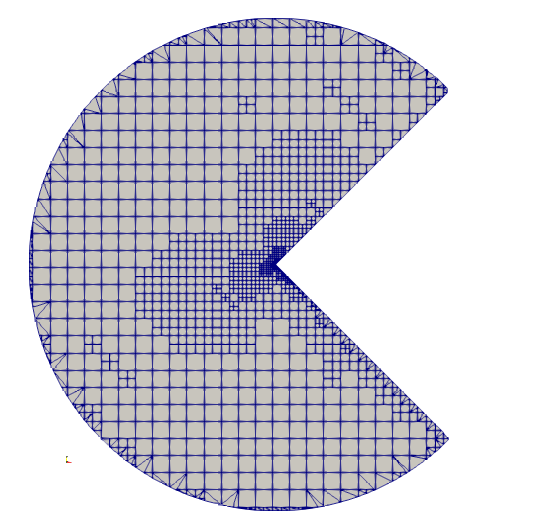}
%    \caption{$1.5 \cdot 10^{-2}$}
%  \end{subfigure}
  \begin{subfigure}[t]{0.19\textwidth}
    \includegraphics[width=\textwidth]{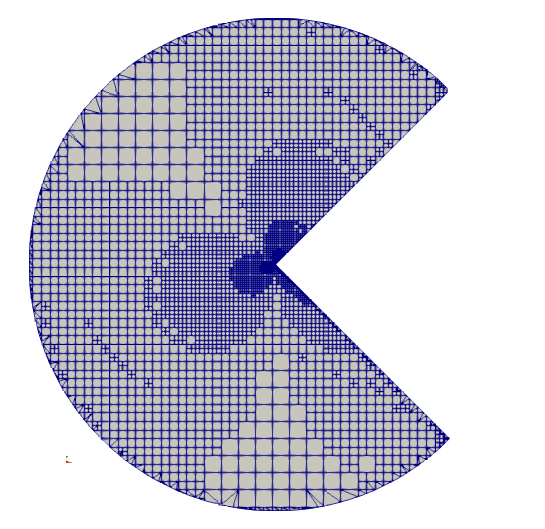}
%    \caption{$7.6 \cdot 10^{-3}$}
  \end{subfigure}
%  \\
%  \begin{subfigure}[t]{0.24\textwidth}
%    \includegraphics[width=\textwidth]{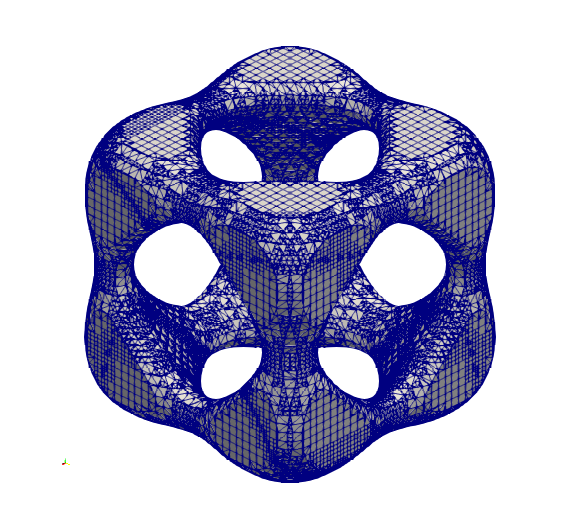}
%    \caption{$8.6 \cdot 10^{0}$}
%  \end{subfigure}
%  \begin{subfigure}[t]{0.24\textwidth}
%    \includegraphics[width=\textwidth]{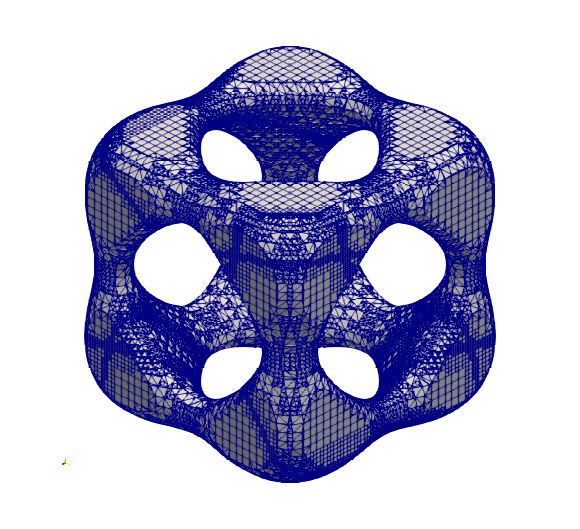}
%    \caption{$6.8 \cdot 10^{0}$}
%  \end{subfigure}
%  \begin{subfigure}[t]{0.24\textwidth}
%    \includegraphics[width=\textwidth]{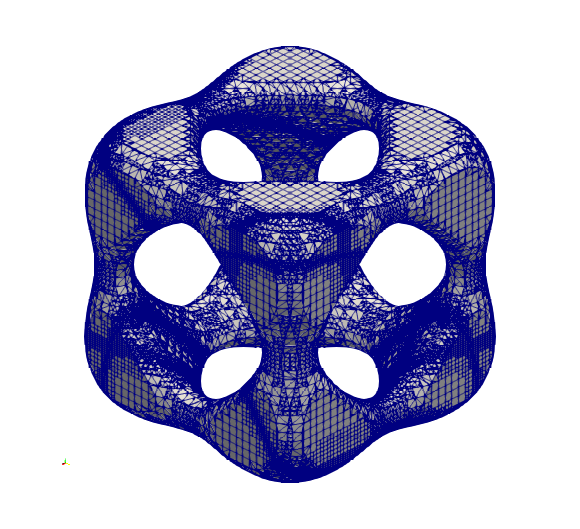}
%    \caption{$4.1 \cdot 10^{0}$}
%  \end{subfigure}
%  \begin{subfigure}[t]{0.19\textwidth}
%    \includegraphics[width=\textwidth]{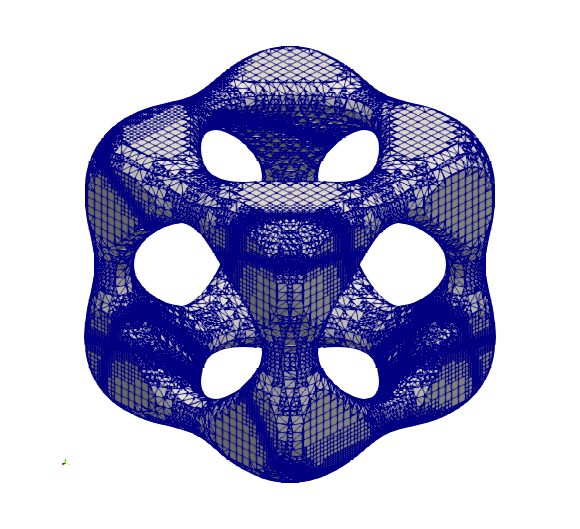}
%    \caption{$3.5 \cdot 10^{0}$}
%  \end{subfigure}
  \begin{subfigure}[t]{0.24\textwidth}
    \includegraphics[width=\textwidth, trim = 0.0cm 1.5cm 0.0cm 0.0cm, clip]{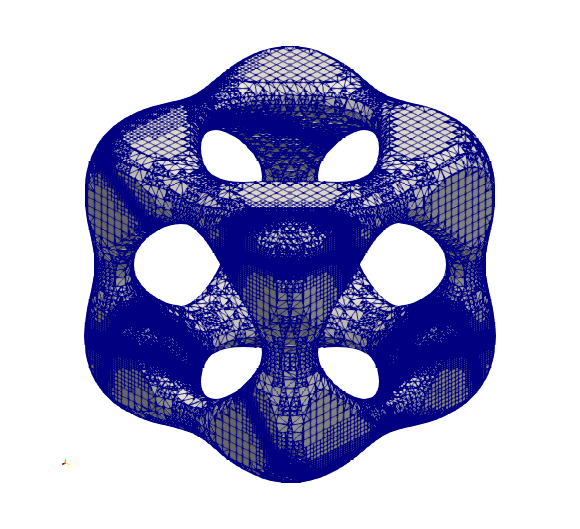}
%    \caption{$1.8 \cdot 10^{0}$}
  \end{subfigure}
%  \\
%  \begin{subfigure}[t]{0.24\textwidth}
%    \includegraphics[width=\textwidth]{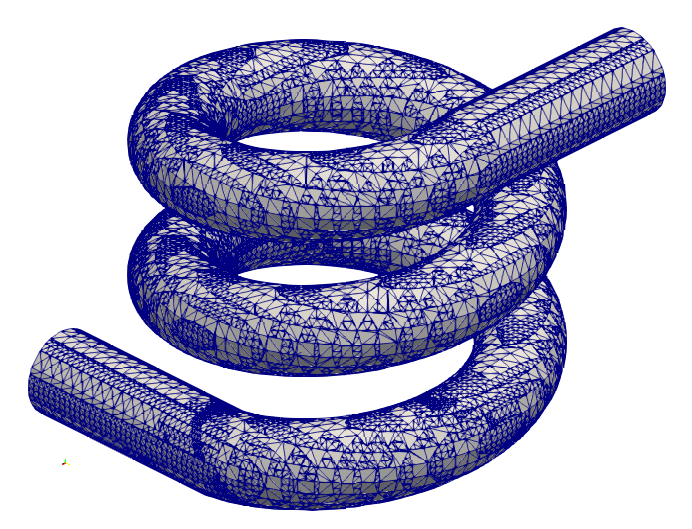}
%    \caption{$4.3 \cdot 10^{0}$}
%  \end{subfigure}
%  \begin{subfigure}[t]{0.24\textwidth}
%    \includegraphics[width=\textwidth]{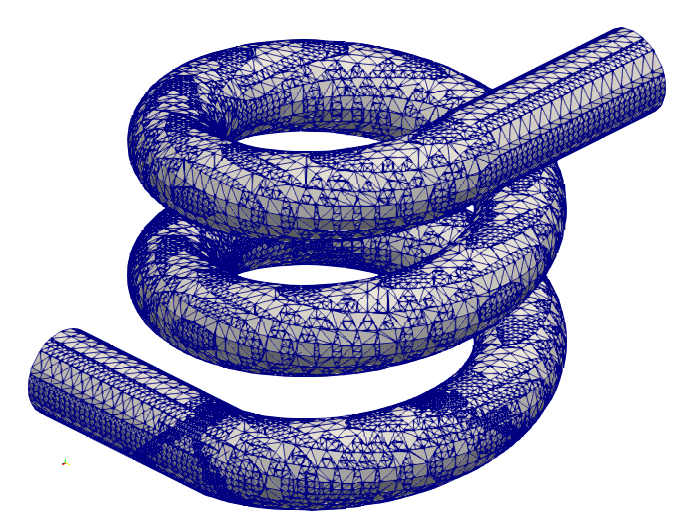}
%    \caption{$3.8 \cdot 10^{0}$}
%  \end{subfigure}
%  \begin{subfigure}[t]{0.24\textwidth}
%    \includegraphics[width=\textwidth]{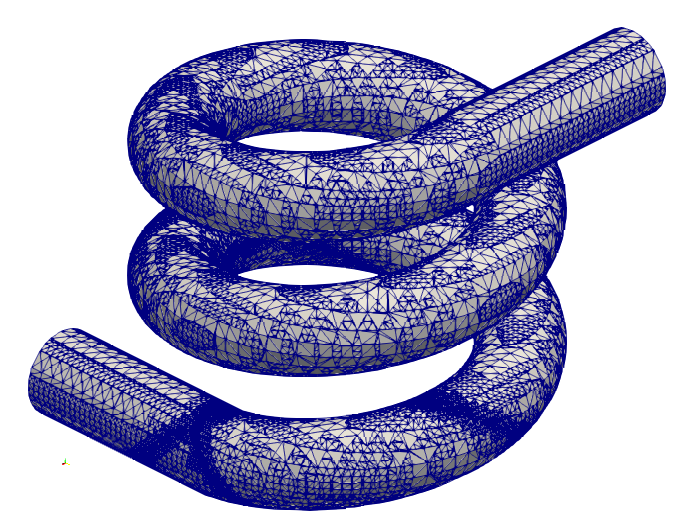}
%    \caption{$2.1 \cdot 10^{0}$}
%  \end{subfigure}
%  \begin{subfigure}[t]{0.19\textwidth}
%    \includegraphics[width=\textwidth]{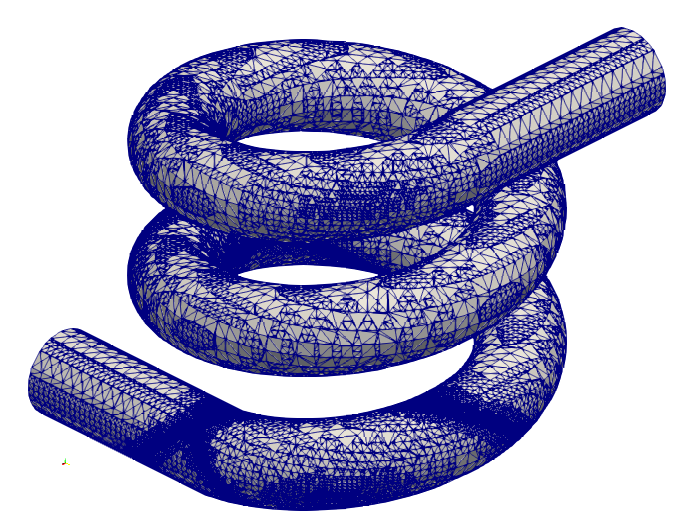}
%    \caption{$1.1 \cdot 10^{0}$}
%  \end{subfigure}
  \begin{subfigure}[t]{0.24\textwidth}
    \includegraphics[width=\textwidth]{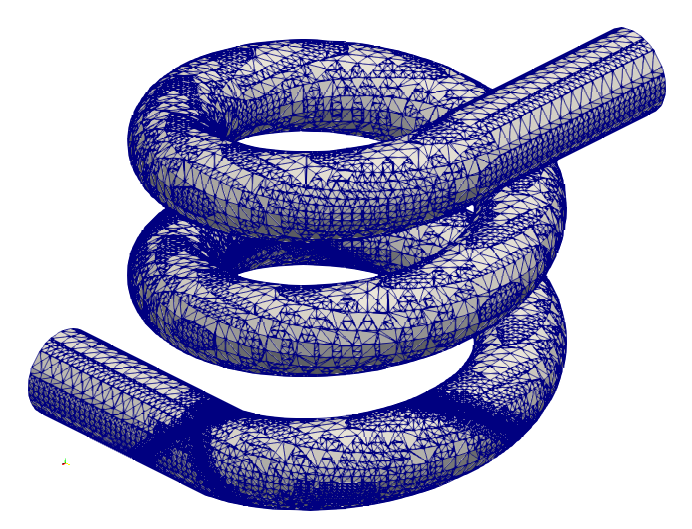}
%    \caption{$9.5 \cdot 10^{-1}$}
  \end{subfigure}
  \caption{Pacman-Fichera, hollow-shock and spiral-shock examples: optimal 
  meshes obtained with the \ac{lb} criterion for the convergence test.}
  \label{fig:meshes}
\end{figure}

We carry out all convergence tests for a fixed number of threads (MPI tasks). We
employ six MN-IV high-memory nodes and map each core to a different MPI task.
Therefore, the experiments are launched in $6 \cdot 48 = 288$ processors. The
partition of the mesh considers $288$ subdomains and is defined to seek an equal
distribution of the number of cells among processors (\p4est{} default setting).
The well-posedness threshold for aggregation $\eta_0$ (see
Section~\ref{sec:cell-aggr}) is prescribed to $0.25$ in what follows.

Let us now start the discussion of the numerical results obtained with
convergence tests. As shown in Figure~\ref{fig:tests_parallel} $h$-Ag\ac{fem}
behaviour consistently mirrors the one of std.\ $h$-\ac{fem}. This includes that
(1) $h$-Ag\ac{fem} always produces more optimal meshes, in terms of the error,
than its non-adaptive version; and (2) optimal convergence rates are retained,
even for the $h$-Ag\ac{fem} Fichera problems, where convergence in the
non-adaptive version is limited by regularity.

Although the std.\ method is slightly more accurate than its ag.\ counterpart
for the Fichera problems with uniform refinements, the usual behaviour is that
they are very similar in terms of accuracy. Another outcome observed is that the
\ac{lb} criterion is clearly more cost-efficient, in terms of mesh size, than
the \ac{ob} one for both std.\ and ag.\ variants. This is also reported
in~\cite{diez1999unified} with std.\ $h$-\ac{fem}.

However, as expected, the linear solver does not manage to generate a solution
in most of std.\ \ac{fe} cases. Either the preconditioner cannot be generated or
it is not positive definite (thus, incompatible with the conjugate gradient
method). Both issues are directly related to the severe ill-conditioning of
matrices obtained with the std.\ method, as extensively reported in previous
works~\cite{Badia2018,Verdugo2019}. On the other hand, when using the ag.\
method, \gamg{} is fully robust and converges towards the solution at the
$10^{-9}$ tolerance.

\begin{figure}[!h]
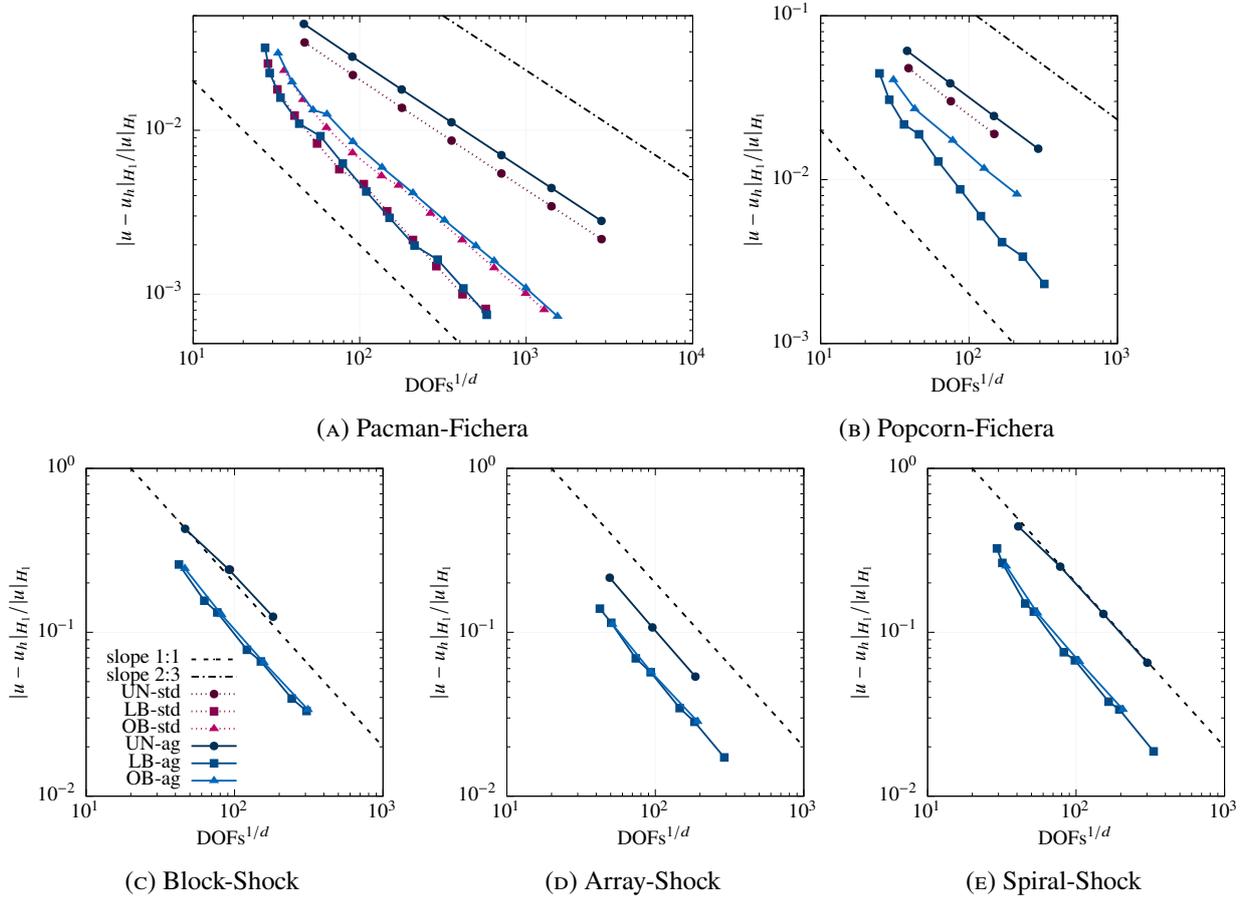

  \centering
  \begin{subfigure}[t]{0.48\textwidth}
    \scalebox{0.7}{\input{figures/error_vs_dofs_parallel/pb0c0d0e0f1g0h0i0j0k0.tex}}
    \caption{Pacman-Fichera}
    \label{fig:tests_parallel-pacman}
  \end{subfigure}
  \begin{subfigure}[t]{0.32\textwidth}
    \scalebox{0.7}{\input{figures/error_vs_dofs_parallel/pb0c1d0e2f1g0h0i0j0k0.tex}}
    \caption{Popcorn-Fichera}
  \end{subfigure} \\
  \begin{subfigure}[t]{0.32\textwidth}
    \scalebox{0.7}{\input{figures/error_vs_dofs_parallel/pb0c1d0e3f1g0h0i0j0k1.tex}}
    \caption{Block-Shock}
  \end{subfigure}
  \begin{subfigure}[t]{0.32\textwidth}
    \scalebox{0.7}{\input{figures/error_vs_dofs_parallel/pb0c1d0e4f1g0h0i0j0k1.tex}}
    \caption{Array-Shock}
  \end{subfigure}
  \begin{subfigure}[t]{0.32\textwidth}
    \scalebox{0.7}{\input{figures/error_vs_dofs_parallel/pb0c1d0e1f1g0h0i0j0k1.tex}}
    \caption{Spiral-Shock}
  \end{subfigure}
  \caption{Convergence tests in parallel environment for 288 tasks.}
  \label{fig:tests_parallel}
\end{figure}

Despite poor robustness of the solver with the std.\ method, available results
in Figure~\ref{fig:iterations_parallel} are enough to clearly identify higher
growth rates in number of iterations for std.\ matrices, than for ag.\ ones.
This exposes that, among the two methods, only $h$-Ag\ac{fem} is potentially
scalable, as the number of iterations mildly grows with the size of the problem;
even for $h$-Ag\ac{fem} points in Figure~\ref{fig:iterations_parallel} with the
largest number of \acp{dof}, convergence is declared in almost twenty
iterations. We have verified that, in this context, the solver achieves
single-digit reduction of the residual norm in 2-3 iterations, at most. Textbook
multigrid efficiency is attained when the solver uses a modest number of point
smoothing steps and convergence nearly advances at one digit in reduction of the
residual norm per iteration~\cite{petsc-user-ref}. We have checked the former is
satisfied, by inspecting \petsc{} log data, whereas the latter is broadly
fulfilled in Ag\ac{fem} experiments. Therefore, \gamg{} on $h$-Ag\ac{fem}
matrices is not only robust, but also efficient.

\begin{figure}[!h]
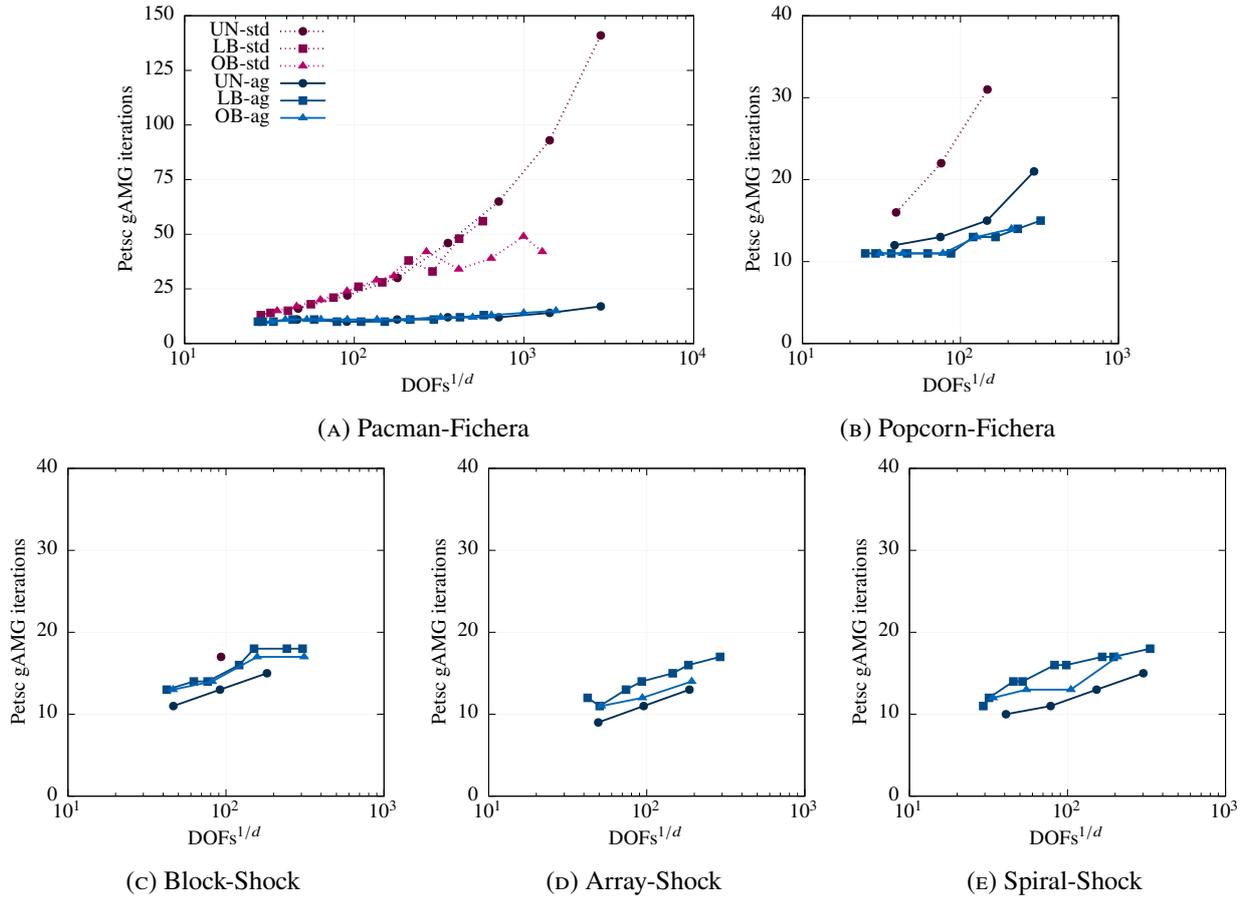

  \centering
  \begin{subfigure}[t]{0.48\textwidth}
    \scalebox{0.7}{\input{figures/iterations/ib0c0d0e0f1g0h0i0j0k0.tex}}
    \caption{Pacman-Fichera}
  \end{subfigure}
  \begin{subfigure}[t]{0.32\textwidth}
    \scalebox{0.7}{\input{figures/iterations/ib0c1d0e2f1g0h0i0j0k0.tex}}
    \caption{Popcorn-Fichera}
  \end{subfigure} \\
  \begin{subfigure}[t]{0.32\textwidth}
    \scalebox{0.7}{\input{figures/iterations/ib0c1d0e3f1g0h0i0j0k1.tex}}
    \caption{Block-Shock}
  \end{subfigure}
  \begin{subfigure}[t]{0.32\textwidth}
    \scalebox{0.7}{\input{figures/iterations/ib0c1d0e4f1g0h0i0j0k1.tex}}
    \caption{Array-Shock}
  \end{subfigure}
  \begin{subfigure}[t]{0.32\textwidth}
    \scalebox{0.7}{\input{figures/iterations/ib0c1d0e1f1g0h0i0j0k1.tex}}
    \caption{Spiral-Shock}
  \end{subfigure}
  \caption{\gamg{} solver iterations in parallel environment for 288 number of
  tasks.}
  \label{fig:iterations_parallel}
\end{figure}

A final experiment with convergence tests looks at the sensitivity of Ag\ac{fem}
to the well-posedness threshold $\eta_0$. As it is shown in
Figure~\ref{fig:gamma_0_sensitivity}, low values of $\eta_0$ may not bypass the
small cut-cell problem and hinder \gamg{} solvability, as shown in the 3D
parallel examples. Conversely, high values of $\eta_0$ do not affect robustness,
but increase solver iterations and reduce (local) accuracy. This is most likely
an effect of excessive well-posed-to-ill-posed \ac{dof} extrapolation. As a
result, optimal $\eta_0$ values may be found in the middle of the $[0,1]$ range.
This means that, while enforcing a minimum amount of aggregation is required to
guarantee robustness, superfluous aggregation deteriorates solver efficiency.
This effect is particularly prominent in $h$-adaptivity; setting $\eta_0 = 1$ on
uniform meshes leads to decent results, as demonstrated in a previous
work~\cite{Verdugo2019}.

\begin{figure}[!h]
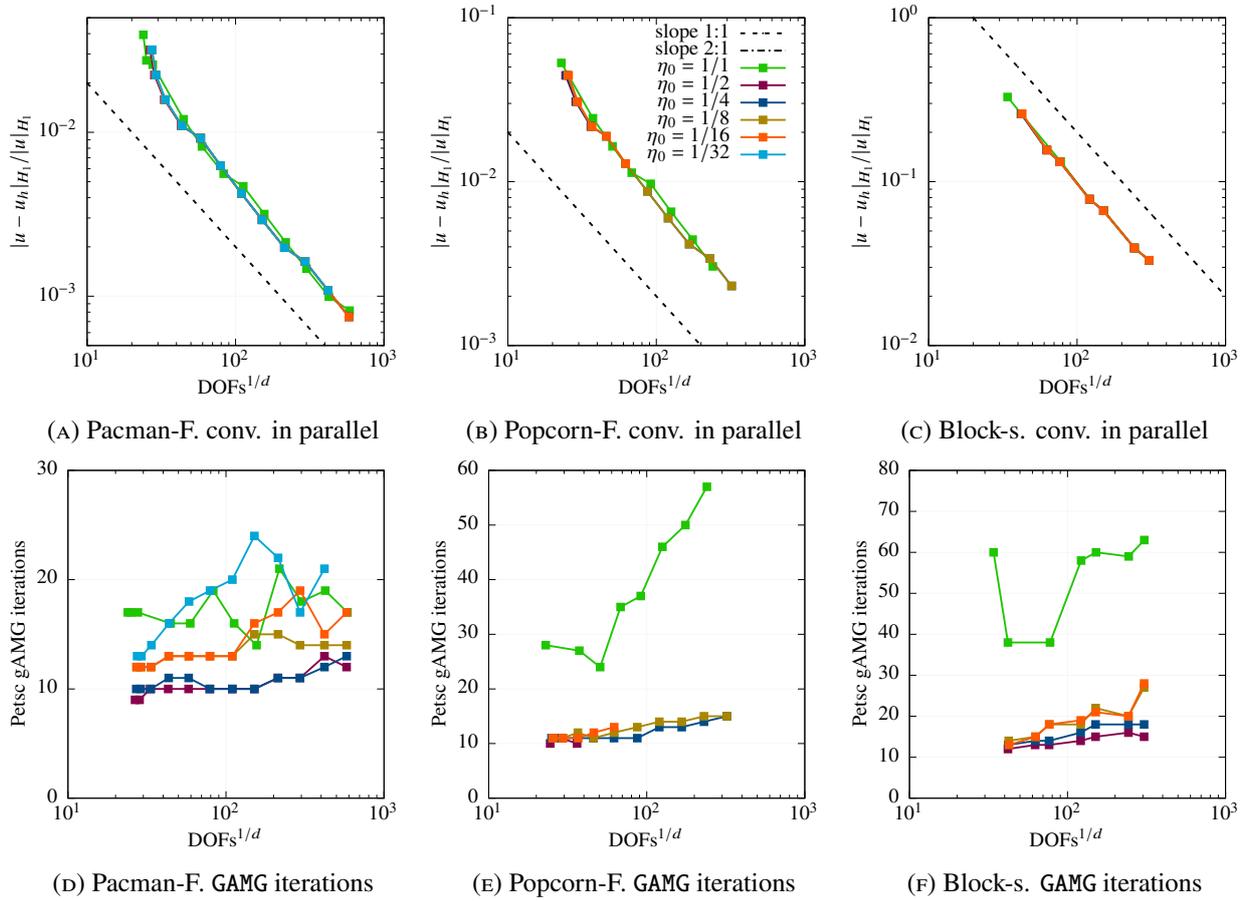

  \centering
  \begin{subfigure}[t]{0.32\textwidth}
    \scalebox{0.7}{\input{figures/eta_0_sensitivity/pb0c0d0e0f1g0h0i0j0k0lxmx.tex}}
    \caption{Pacman-F. conv. in parallel}
  \end{subfigure}
  \begin{subfigure}[t]{0.32\textwidth}
    \scalebox{0.7}{\input{figures/eta_0_sensitivity/pb0c1d0e2f1g0h0i0j0k0lxmx.tex}}
    \caption{Popcorn-F. conv. in parallel}
  \end{subfigure}
  \begin{subfigure}[t]{0.32\textwidth}
    \scalebox{0.7}{\input{figures/eta_0_sensitivity/pb0c1d0e3f1g0h0i0j0k1lxmx.tex}}
    \caption{Block-s. conv. in parallel}
  \end{subfigure} \\
  \begin{subfigure}[t]{0.32\textwidth}
    \scalebox{0.7}{\input{figures/eta_0_sensitivity/ib0c0d0e0f1g0h0i0j0k0lxmx.tex}}
    \caption{Pacman-F. \gamg{} iterations}
  \end{subfigure}
  \begin{subfigure}[t]{0.32\textwidth}
    \scalebox{0.7}{\input{figures/eta_0_sensitivity/ib0c1d0e2f1g0h0i0j0k0lxmx.tex}}
    \caption{Popcorn-F. \gamg{} iterations}
  \end{subfigure}
  \begin{subfigure}[t]{0.32\textwidth}
    \scalebox{0.7}{\input{figures/eta_0_sensitivity/ib0c1d0e3f1g0h0i0j0k1lxmx.tex}}
    \caption{Block-s. \gamg{} iterations}
  \end{subfigure} \\
  \caption{$h$-Ag\ac{fem} sensitivity to $\eta_0$ with the \ac{lb} criterion.
  Recall that $\eta_0 = 0.25$ is the reference value in previous experiments
  (see Figures~\ref{fig:tests_parallel}-\ref{fig:iterations_parallel}).}
  \label{fig:gamma_0_sensitivity}
\end{figure}

\subsection{Weak scaling}\label{sec:weak-scaling}

The starting point of weak scaling tests is the parallel convergence test setup
of the previous section. As explained, a single convergence test case results in
a set of pairs
\[ \left\{ \left( \frac{\| e \|_{a,\gamma_i}}{\| u
\|_{a,\gamma_i}},N_{\mathrm{dofs}}^{\gamma_i} \right) \right\}_{\gamma_i > 1},
\] associated with a finite sequence of decreasing target error values
$\gamma_i$, $i > 1$. Each test corresponds to an individual curve, e.g.\ the
\ac{lb}-ag curve for the Pacman-Fichera test case in
Figure~\ref{fig:tests_parallel-pacman}. Other quantities can be extracted from
the test, e.g.\ the size of the global triangulation
$N_{\mathrm{cells}}^{\gamma_i}$. In Section~\ref{sec:convergence-tests}, each
pair was obtained for a fixed number of processors $P = 288$. Naturally, as
$N_{\mathrm{cells}}^{\gamma_i}$ increases with $i$, so does the size of the
local portion of the triangulation $n_{\mathrm{cells}}^{\gamma_i}$, owned by
each processor.

Given $\left\{N_{\mathrm{cells}}^{\gamma_i}\right\}_{i > 1}$ associated with a
convergence test, a weak scaling one can be derived by adjusting the number of
processors $P^i$ for each $\gamma_i$, such that $n_{\mathrm{cells}}^{\gamma_i}$
remains approximately constant for all $i > 1$. This can be achieved, by e.g.\
prescribing
\[
	P^i = P^1 \left\lfloor
	\frac{N_{\mathrm{cells}}^{\gamma_i}}{N_{\mathrm{cells}}^{\gamma_1}}
	\right\rfloor, \ i > 1,
\]
where $P^1$ is a fixed initial number of processors and $\lfloor \cdot \rfloor$
is the \textit{floor} function; given a real number $x$, $\lfloor x \rfloor$ is
the greatest integer less than or equal to $x$. From here, the weak scaling test
consists merely in repeating the convergence test, taking $P^i$ processors for
each $\gamma_i$. In this way, by keeping the local size of the mesh
$n_{\mathrm{cells}}^{\gamma_i}$ constant, we can straightforwardly study how
$h$-Ag\ac{fem} scales with global size of the problem.\footnote{We have checked
that, using this approach, the local size of the problem (local number of
\acp{dof}) increases monotonically, though mildly, for $i > 1$. Thus, this
conservative approach allows us to examine how the problem scales, avoiding
cumbersome strategies to balance \acp{dof}.} Table~\ref{tab:partitions} gathers
the sequences $\left\{P^i\right\}_{i > 1}$ obtained following this procedure for
the two test cases that will be studied in this section, namely, the
Popcorn-Fichera and Hollow-Shock problems for the Ag\ac{fem} method with the
$\ac{lb}$ remeshing criterion and $\eta_0 = 0.25$.

\begin{table}[!h]
	\centering
	\begin{small}
		\begin{tabular}{lrrrrrrr}
		\toprule
		\multicolumn{8}{c}{Popcorn-Fichera \ac{lb}-ag with $\eta_0 = 0.25$ and
		$n_{\mathrm{cells}} \approx 15.5k$} \\
		\midrule
		$P$ & 2 & 8 & 19 & 52 & 132 & 349 & 883 \\
		$N_{\mathrm{cells}}$ & 31k & 130k & 301k & 800k & 2,025k & 5,354k & 13,553k
		\\
		\midrule
		\multicolumn{8}{c}{Hollow-shock \ac{lb}-ag with $\eta_0 = 0.25$ and
		$n_{\mathrm{cells}} \approx 21.0k$} \\
		\midrule
		$P$ & 6 & 17 & 29 & 107 & 194 & 790 & 1,484 \\
		$N_{\mathrm{cells}}$ & 126k & 369k & 612k & 2,261k & 4,083k & 16,662k &
		31,221k \\
		\bottomrule
		\end{tabular}
	\end{small}\vspace{0.2cm}
	\caption{Number of subdomains and total cells in the background mesh for the
	cases considered in the weak scaling tests of Figure~\ref{fig:ws_results}. For
	each case, local mesh size, given by $n_{\mathrm{cells}}$, remains
	quasi-constant with the number of subdomains $P$.} 
	\label{tab:partitions}
\end{table}

In weak scaling tests, we monitor wall clock times spent in the main phases of
(i) the Ag\ac{fem} method and (ii) the linear solver. We additionally get (iii)
the number of \gamg{} solver iterations. As finding the optimal mesh for each
$\gamma_i$, $i > 1$ is an iterative \ac{amr} process, we only report these
quantities for the optimal mesh (last iteration). In the \ac{fe} simulation
loop, the starting control point is right after generating and partitioning the
optimal mesh. From here, and following the order of the simulation pipeline, we
report the time consumed in relevant Ag\ac{fem}-related phases
\begin{enumerate}
	\item parallel cell aggregation, i.e. generation of the distributed-memory root
	cell map $R$ (Section~\ref{sec:cell-aggr}),
  \item import data from missing remote root (and their coarse neighbour) cells,
  i.e. import $\T_h^{\RG}$ (Section~\ref{sec:dm-impl}),
	\item setup of the distributed $\V_h^\std$ space
	(Section~\ref{sec:std-fe-spaces}), accounting for hanging \ac{dof}
	constraints,
	\item setup of the distributed $\V_h^\ag$ space on top of $\V_h^\std$
	(Sections~\ref{sec:ag-fe-spaces} and~\ref{sec:dm-impl}), with mixed
	hanging and aggregation \ac{dof} constraints.
\end{enumerate}
This is followed (and completed) by gathering the time spent in the linear
solver setup and run stages, as well as the number of solver iterations needed
to find the approximate solution to the problem on the optimal mesh. The
convergence criterion is the same as the one of the previous section, i.e.\ $\|
\mathbf{r} \|_2/ \| \mathbf{b} \|_2 < 10^{-9}$.

To allocate the MPI tasks in the MN-IV supercomputer, we resort to the default
task placement policy of Intel MPI (v2018.4.057) with partially filled nodes.
For each point of the test, the number of nodes $N^i$ is selected as $N^i =
\left\lceil P^i/48 \right\rceil$, where $\lceil \cdot \rceil$ is the
\textit{ceiling} function; given a real number $x$, $\lceil x \rceil$ is
the smallest integer more than or equal to $x$. If $P^i$ is not multiple of 48,
the placement policy fully populates the first $N-1$ nodes with 48 MPI tasks
per node; the remaining $P^i - 48(N-1)$ MPI tasks are mapped to the last node.

Figure~\ref{fig:ws_results} gathers all the quantities surveyed in weak scaling
tests. All main phases of the $h$-Ag\ac{fem} method exhibit remarkable
scalability (Figures~\ref{fig:agfem_phases_popcorn}
and~\ref{fig:agfem_phases_hollow}). The results are also qualitatively similar
for both geometries. Concerning solver performance in
Figures~\ref{fig:solver_phases_popcorn}-\ref{fig:cg_iterations_hollow}, although
times and iterations do not scale as well as $h$-Ag\ac{fem}-specific phases,
results are still sound. Different system matrix conditioning could explain the
slight differences between the two problems in solver performance. In any case,
growth rate is mild, compared to growth of problem size. For instance, in the
Hollow-shock example, total solver wall clock time (setup plus run) scales from
0.55 to 2.34 s, while the problem size scales from 126,232 to 16,619,828 cells.
This means the total solver time increases by a factor of $4.3x$, whereas the
problem size by a factor of $131.7x$. On the other hand, solver degradation is
likely not fully attributed to $h$-Ag\ac{fem}; see, e.g.~the results
in~\cite{Verdugo2019}, showing that \gamg{} loses parallel efficiency even when
dealing with body-fitted meshes.

\begin{figure}[!h]
  \centering
  \begin{subfigure}[t]{0.95\textwidth}
    \includegraphics[width=\textwidth]{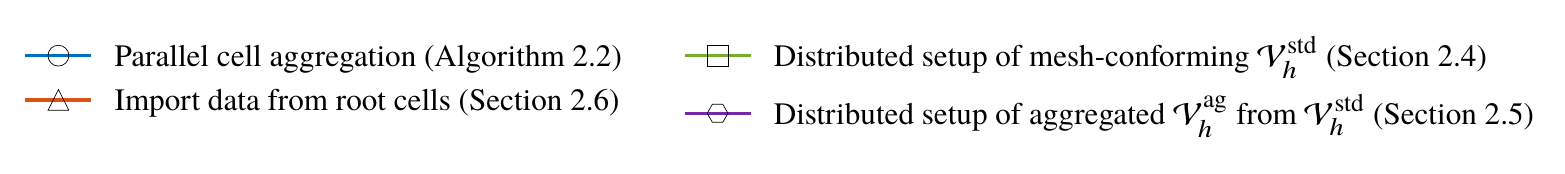}
  \end{subfigure}
  \\ \vspace{-0.25cm}
  \begin{subfigure}[t]{0.49\textwidth}
    \includegraphics[width=\textwidth]{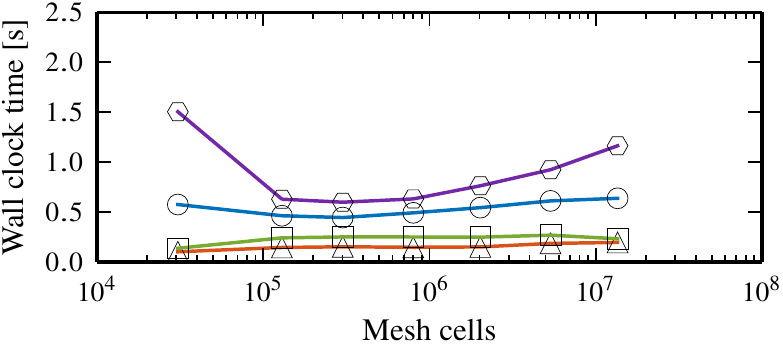}
    \caption{Popcorn-Fichera LB $\eta_0 = 0.25$}
    \label{fig:agfem_phases_popcorn}
  \end{subfigure}
  \begin{subfigure}[t]{0.49\textwidth}
    \includegraphics[width=\textwidth]{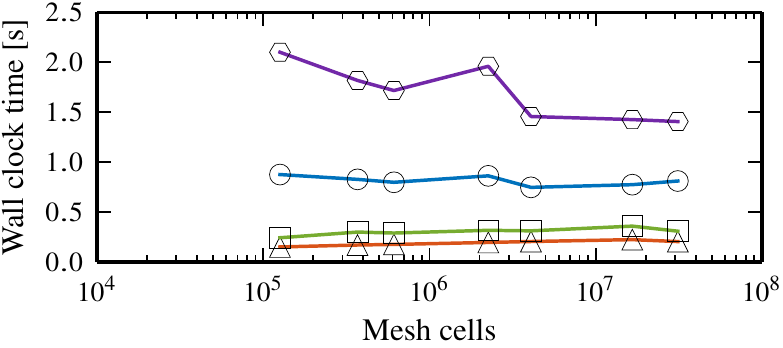}
    \caption{Hollow-shock LB $\eta_0 = 0.25$}
    \label{fig:agfem_phases_hollow}
  \end{subfigure}
  \\
  \begin{subfigure}[t]{0.95\textwidth}
    \includegraphics[width=\textwidth, trim=0.0cm 0.1cm 0.0cm 0.3cm, clip]{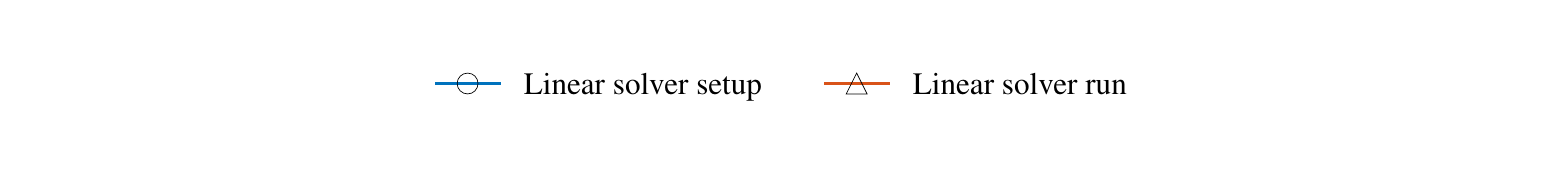}
  \end{subfigure}
  \\ \vspace{-0.5cm}
  \begin{subfigure}[t]{0.49\textwidth}
    \includegraphics[width=\textwidth]{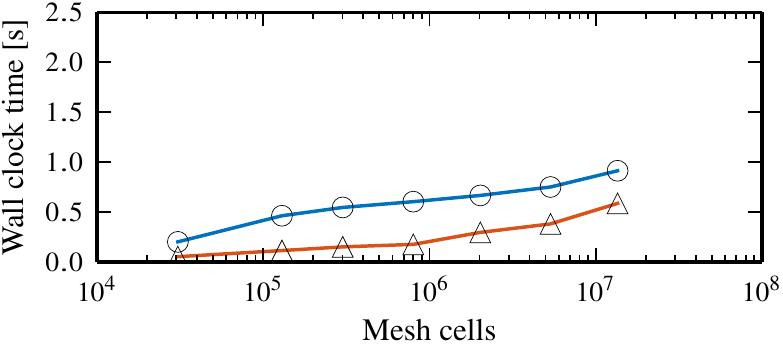}
    \caption{Popcorn-Fichera LB $\eta_0 = 0.25$}
    \label{fig:solver_phases_popcorn}
  \end{subfigure}
  \begin{subfigure}[t]{0.49\textwidth}
    \includegraphics[width=\textwidth]{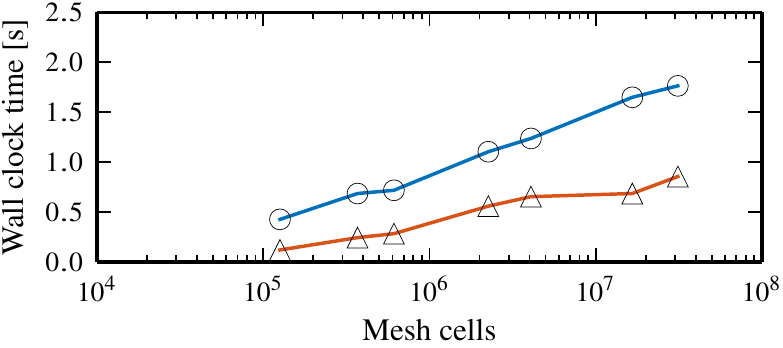}
    \caption{Hollow-shock LB $\eta_0 = 0.25$}
    \label{fig:solver_phases_hollow}
  \end{subfigure}
  \\ \vspace{0.5cm}
  \begin{subfigure}[t]{0.49\textwidth}
    \includegraphics[width=\textwidth]{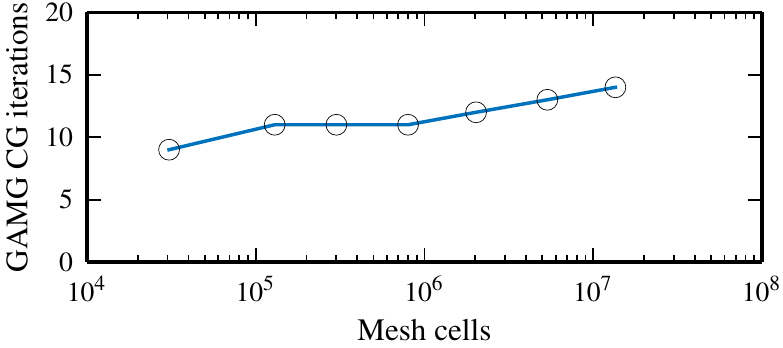}
    \caption{Popcorn-Fichera LB $\eta_0 = 0.25$}
    \label{fig:cg_iterations_popcorn}
  \end{subfigure}
  \begin{subfigure}[t]{0.49\textwidth}
    \includegraphics[width=\textwidth]{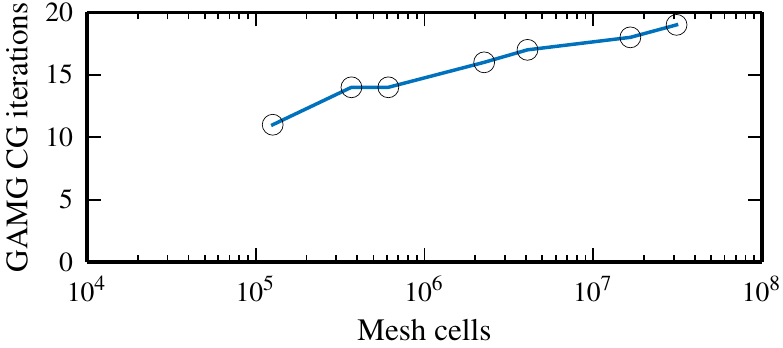}
    \caption{Hollow-shock LB $\eta_0 = 0.25$}
    \label{fig:cg_iterations_hollow}
  \end{subfigure}
  \caption{AgFEM weak scaling tests up to 1,484 MPI tasks, as specified in Table~\ref{tab:partitions}.}
  \label{fig:ws_results}
\end{figure}

\section{Conclusions}\label{sec:conclusions}

In this work, we have introduced the aggregated finite element method on
parallel adaptive tree-based meshes, referred to as $h$-Ag\ac{fem}. The main
difficulty is to establish how to combine hanging \ac{dof} constraints, arising
from mesh non-conformity, with aggregation ones, which are needed to get rid of
the small cut cell problem,
in the definition of the discrete extension operator from well-posed to ill-posed \acp{dof}.
We have followed a two-level strategy, grounded on
building the aggregated \ac{fe} space on top of an existing conforming \ac{fe}
space.

As main contributions of the paper, we have shown that (a) our approach allows
one to define a unified Ag\ac{fe} space accounting for both type of constraints,
without circular constraint dependencies; the key point is to mark as ill-posed
\acp{dof} those without local support in a well-posed cell. We have also
described how, (b) by carefully extending the layer of ghost cells, a
distributed-memory version of $h$-Ag\ac{fem} can be easily incorporated into
existing large-scale \ac{fe} codes. With numerical experimentation on the
Poisson problem, we have studied the behaviour of $h$-Ag\ac{fem}. It (c) enjoys
the same benefits of standard $h$-\ac{fem} on body-fitted meshes. In particular,
it restores optimal rates of convergence, implied by order of approximation
alone, and it is amenable to standard mesh optimality criteria. Likewise, it
also (d) inherits good properties from Ag\ac{fem} on uniform meshes, above all
robustness with respect to cut location. We have also demonstrated (e) good
parallel performance of a distributed-memory implementation of $h$-Ag\ac{fem};
the main outcome is that it can efficiently exploit well-known \ac{amg}
preconditioners available in, e.g.\ \petsc. Finally, we have (f) carried out a
complete numerical analysis that supports the design of the method and 
the  numerical results. 

We have successfully managed to bridge unfitted methods and parallel
non-conforming tree-based meshes for the first time. $h$-Ag\ac{fem} has
the potential to grow and tackle large-scale multi-phase and multi-physics
\ac{fe} applications on arbitrarily complex geometries, aided by functional and
geometrical error-driven mesh adaptation. As future work, it also remains to
extend $h$-Ag\ac{fem} to high-order \ac{fe} approximations and, more generally,
$hp$-adaptivity.

\appendix
\section{Derivation of the Ag\ac{fe} space $\V_h^\ag$}\label{appendix:agfespace}

In this appendix, our goal is to show that any constrained \ac{dof} $\sigma \in
\Sigma^\C$ of the Ag\ac{fe} space $\V_h^\ag$ given in ~\eqref{eq:ag-fe-space}, can
be resolved with \emph{direct} constraints. This means that it is composed by
linear constraints of the same form as those in \eqref{eq:std-fe-space}, i.e.~in
terms of well-posed free \acp{dof}, only. For this purpose, we go over each subset
of $\Sigma^\C$ and characterise the subsets of $\Sigma^{\wp,\F}$ constraining
them, as well as the coefficients of the linear constraints. We also argue that
the resulting constraint dependency graph, drawn in
Figure~\ref{fig:agfem_constr_graph}, has no cyclic constraint dependencies. The
discussion leads to the definition of an aggregated \ac{fe} space $\V_h^\ag$ that
is a subspace of $\V_h^\std$ with the same structure, i.e.~restricted with linear
constraints.

\begin{figure}[ht!]
  \centering
  \begin{tikzpicture}
   
    \node (S)               at ( 0,   0) {$\Sigma$}        ;

    \node (SWF)             at ( 4, 1.5) {$\Sigma^{\wp,\F}$};
    \node (SWH)             at ( 4, 0.5) {$\Sigma^{\wp,\H}$};
    \node (SIF)             at ( 4,-0.5) {$\Sigma^{\ip,\F}$};
    \node (SIH)             at ( 4,  -2) {$\Sigma^{\ip,\H}$};

    \node (SIF-SWF)         at ( 6,   0) {$\Sigma^{\wp,\F}$};
    \node (SIF-SWH)         at ( 6,  -1) {$\Sigma^{\wp,\H}$};
    \node (SIH-SWF)         at ( 6,-1.5) {$\Sigma^{\wp,\F}$};
    \node (SIH-SIF)         at ( 6,-2.5) {$\Sigma^{\ip,\F}$};

    \node (SWH-SWF)         at ( 8, 0.5) {$\Sigma^{\wp,\F}$};
    \node (SIF-SWH-SWF)     at (10,  -1) {$\Sigma^{\wp,\F}$};
    \node (SIH-SIF-SWF)     at ( 8,  -2) {$\Sigma^{\wp,\F}$};
    \node (SIH-SIF-SWH)     at ( 8,  -3) {$\Sigma^{\wp,\H}$};

    \node (SIH-SIF-SWH-SWF) at (12,-3) {$\Sigma^{\wp,\F}$};

    \draw[thick,->] (S) -- (SWF); \draw[thick,->] (S) -- (SWH);
    \draw[thick,->] (S) -- (SIF); \draw[thick,->] (S) -- (SIH);
    
    \draw[dashed,blue,thick,->] (SWH) -- node[above, near start, yshift=-0.05cm]
    {\tiny $\C$} node[black, above, yshift=-0.05cm] {\tiny
    Eq.~\eqref{eq:prop-well-posed-hanging}} (SWH-SWF);
    \draw[dashed,blue,thick,->] (SIF) -- node[above, near start, yshift=-0.05cm]
    {\tiny $\C$} (SIF-SWF); \draw[dashed,blue,thick,->] (SIF) -- node[above,
    near start, yshift=-0.05cm] {\tiny $\C$} node[black, above, yshift=-0.5cm]
    {\tiny Eq.~\eqref{eq:agfem_constraining}} (SIF-SWH);
    \draw[dashed,blue,thick,->] (SIH) -- node[above, near start, yshift=-0.05cm]
    {\tiny $\C$} (SIH-SWF); \draw[dashed,blue,thick,->] (SIH) -- node[above,
    near start, yshift=-0.05cm] {\tiny $\C$} node[black, above, yshift=-0.5cm]
    {\tiny Eq.~\eqref{eq:prop-hanging}} (SIH-SIF);

    \draw[dashed,blue,thick,->] (SIF-SWH) -- node[above, near start,
    yshift=-0.05cm] {\tiny $\C$} node[black, above, yshift=-0.05cm] {\tiny
    Eq.~\eqref{eq:prop-well-posed-hanging}} (SIF-SWH-SWF);
    \draw[dashed,blue,thick,->] (SIH-SIF) -- node[above, near start,
    yshift=-0.05cm] {\tiny $\C$} (SIH-SIF-SWF); \draw[dashed,blue,thick,->]
    (SIH-SIF) -- node[above, near start, yshift=-0.05cm] {\tiny $\C$}
    node[black, above, yshift=-0.5cm] {\tiny Eq.~\eqref{eq:agfem_constraining}}
    (SIH-SIF-SWH);

    \draw[dashed,blue,thick,->] (SIH-SIF-SWH) -- node[above, near start,
    yshift=-0.05cm] {\tiny $\C$}  node[black, above, yshift=-0.05cm] {\tiny
    Eq.~\eqref{eq:prop-well-posed-hanging}} (SIH-SIF-SWH-SWF);

\end{tikzpicture}
  \caption{Constraint dependency graph of the Ag\ac{fe} space $\V_h^\ag$. The
  set of global \acp{dof} $\Sigma$ is partitioned into $\{ \Sigma^{\wp,\F},
  \Sigma^{\wp,\H}, \Sigma^{\ip,\F}, \Sigma^{\ip,\H} \}$. Subsets
  $\Sigma^{\wp,\H}$, $\Sigma^{\ip,\F}$ and $\Sigma^{\ip,\H}$ are all constrained
  by $\Sigma^{\wp,\F}$ with a dependency graph represented by dashed blue edges
  marked with a $\C$. Dashed blue edges link a constrained subset with the
  subsets where its masters belong to. We observe that the graph has no cycles.}
	\label{fig:agfem_constr_graph}
\end{figure}

According to this, given $\sigma \in \Sigma^\C$,
\begin{enumerate}
  \item if \underline{$\sigma \in \Sigma^{\wp,\H}$}, then $\M^\H_\sigma$ is formed
  by \acp{dof} located in \acp{vef} of coarser neighbour cells around $\sigma$,
  see Section~\ref{sec:std-fe-spaces}. As $\T_h$ meets the 2:1 balance condition,
  constraining \acp{dof} of hanging \acp{dof} are free \acp{dof}~\cite[Proposition
  4.1]{Badia2019b}, i.e.~
  \begin{equation}\label{eq:prop-hanging}
    \sigma \in \Sigma^\H \Rightarrow \M^\H_\sigma \subset \Sigma^\F.
  \end{equation}
  Recalling Definition~\ref{def:free-well-posed} (ii), it follows that
  master \acp{dof} of $\sigma$ are necessarily contained in the set of
  well-posed free \acp{dof}, i.e.~
  \begin{equation}\label{eq:prop-well-posed-hanging}
    \sigma \in \Sigma^{\wp,\H} \Rightarrow \M^\H_\sigma \subset \Sigma^{\wp,\F}.
  \end{equation}
  Therefore, linear constraints of $\sigma \in \Sigma^{\wp,\H}$ remain unchanged
  in the new Ag\ac{fe} space.
  \item If \underline{$\sigma \in \Sigma^{\ip,\F}$}, then we assume that we have
  composed the root cell map $R : \T_h^\act \to \T_h^\wp$, introduced in
  Section~\ref{sec:cell-aggr}, with a map between ill-posed free \acp{dof}
  $\Sigma^{\ip,\F}$ and ill-posed cells $\T_h^\ip$. In other words, we assign
  first each ill-posed free \ac{dof} to one of its surrounding ill-posed cells.
  The chosen cell is then mapped onto a well-posed cell via $R$. Thus, the outcome
  of this composition is a map $K : \Sigma^{\ip,\F} \to \T_h^\wp$, that assigns an
  ill-posed free \ac{dof} to a well-posed cell; see formal definitions in,
  e.g.~\cite{Verdugo2019,Badia2018}. Given $\sigma \in \Sigma^{\ip,\F}$, let us
  denote by $\M^{\A\A}_\sigma$ the subset of \acp{dof} $\tilde{\sigma}$ located in
  $K(\sigma)$, such that $\phi^{\tilde{\sigma}} (\x^{\sigma}) \neq 0$. We refer to
  $\M^{\A\A}_\sigma$ as the set of ``direct'' Ag\ac{fem} master \acp{dof} of
  $\sigma \in \Sigma^{\ip,\F}$. As usual in Ag\ac{fe} methods, given $v_h \in
  \V_h^\std$ and $\sigma \in \Sigma^{\ip,\F}$, we enforce the constraint
  \begin{equation}
    v^\sigma_h = \sum_{\tilde{\sigma} \in \M^{\A\A}_\sigma}
    C^{\A\A}_{\sigma\tilde{\sigma}} v_h^{\tilde{\sigma}}, 
    \quad \text{with} \ C^{\A\A}_{\sigma\tilde{\sigma}} \doteq 
    \phi^{\tilde{\sigma}} (\x^{\sigma}),
    \label{eq:agfem_constraints}
  \end{equation}
  that is, we linearly extrapolate the nodal value of an ill-posed \ac{dof} with
  the values at the local \acp{dof} of its root cell.  In general,
  $\M^{\A\A}_\sigma$ is composed of both free and hanging \acp{dof}, i.e.~some
  \acp{dof} in the root cell can be hanging; the latter are not master
  \acp{dof}, in the strict sense, and we need to remove them, i.e.~rewrite
  \eqref{eq:agfem_constraints} in terms of well-posed free \acp{dof},
  only. For that purpose, we introduce the partition $\M^{\A\A}_\sigma =
  \{\M^{\A\F}_\sigma, \M^{\A\H}_\sigma \}$, with $\M^{\A\F}_\sigma \doteq
  \M^{\A\A}_\sigma \cap \Sigma^{\F} $, $\M^{\A\H}_\sigma \doteq \M^{\A\A}_\sigma
  \cap \Sigma^{\H}$. Since the image of $K$ is in $\T_h^\wp$, it is clear that
  $\M^{\A\F}_\sigma \subset \Sigma^{\wp,\F}$ and $\M^{\A\H}_\sigma \subset
  \Sigma^{\wp,\H}$. We also have that
  \begin{equation}
    \sigma \in \Sigma^{\ip,\F} \Rightarrow \M^{\A\A}_\sigma \subset 
    \Sigma^{\wp,\F} \cup \Sigma^{\wp,\H}.
    \label{eq:agfem_constraining}
  \end{equation}
  Recalling the first case, i.e.~\underline{$\sigma \in \Sigma^{\wp,\H}$}, the
  set of \acp{dof} that are masters of $\M^{\A\H}_\sigma$ is given by
  $\bigcup_{\sigma' \in \M^{\A\H}_\sigma} \M^\H_{\sigma'}$ and, by
  \eqref{eq:prop-well-posed-hanging}, it is included in
  $\Sigma^{\wp,\F}$. If the previous property didn't hold, then $\Sigma^{\ip,\F}
  \cap \left( \bigcup_{\sigma' \in \M^{\A\H}_\sigma} \M^\H_{\sigma'} \right)
  \neq \emptyset$ and it could be possible that $\sigma \in \bigcup_{\sigma' \in
  \M^{\A\H}_\sigma} \M^\H_{\sigma'}$, i.e.~$\sigma$ could (circularly)
  constrain itself, as in the situation depicted in Figure~\ref{fig:spaces_b}.
  
  Hence, the ``true'' set of master \acp{dof} of $\sigma \in \Sigma^{\ip,\F}$ is
  $\M^\A_\sigma \doteq \M^{\A\F}_\sigma \cup \left( \bigcup_{\sigma' \in
  \M^{\A\H}_\sigma} \M^\H_{\sigma'} \right)$; note that the two set members of
  $\M^\A_\sigma$ are not necessarily disjoint, but $\M^\A_\sigma \subset
  \Sigma^{\wp,\F}$. Besides, recalling that hanging \acp{dof} are constrained by
  free \acp{dof} on top of \acp{vef} of coarser neighbour cells, $\M^\A_\sigma$
  are composed of \acp{dof} located in root cells and (neighbouring) coarser
  cells around them.
  
  After cancelling hanging \acp{dof}, we can derive an analogous expression to
  \eqref{eq:agfem_constraints}, in terms of well-posed free \acp{dof}
  only. The value of the Ag\ac{fem} constraint, for $\sigma \in \Sigma^{\ip,\F}$
  and $\tilde{\sigma} \in \M^\A_\sigma$, is
  \begin{equation}
  C^\A_{\sigma\tilde{\sigma}} \doteq
    \left\lbrace
    \begin{array}{ll}
      C^{\A\A}_{\sigma\tilde{\sigma}} & \text{if} \ \tilde{\sigma} \in
      \M^{\A\F}_\sigma, \ \text{only} \\
      C^{\A\A}_{\sigma\tilde{\sigma}} + \sum_{ \left( \sigma' \in
      \M^{\A\H}_\sigma \ \text{s.t.} \ \tilde \sigma \in \M^\H_{\sigma'} \right)}
      C^{\A\A}_{\sigma\sigma'} C^\H_{\sigma'\tilde{\sigma}} & \text{if} \
      \tilde{\sigma} \in \M^{\A\F}_\sigma \cap \left(
      \bigcup_{\sigma' \in \M^{\A\H}_\sigma} \M^\H_{\sigma'} \right) \\
      \sum_{ \left( \sigma' \in \M^{\A\H}_\sigma \ \text{s.t.} \ \tilde \sigma
      \in \M^\H_{\sigma'} \right)} C^{\A\A}_{\sigma\sigma'}
      C^\H_{\sigma'\tilde{\sigma}} & \text{if} \ \tilde{\sigma} \in 
      \bigcup_{\sigma' \in \M^{\A\H}_\sigma} \M^\H_{\sigma'}, \ \text{only} .
    \end{array}
    \right.\label{eq:agfem_coefficients}
  \end{equation}
  We refer to Figure~\ref{fig:spaces_c} for an illustration of the three
  types of $\tilde{\sigma} \in \M_\sigma^\A$ in~\eqref{eq:agfem_coefficients}.
  \begin{figure}[ht!]
    \centering
    \includegraphics[width=0.24\textwidth]{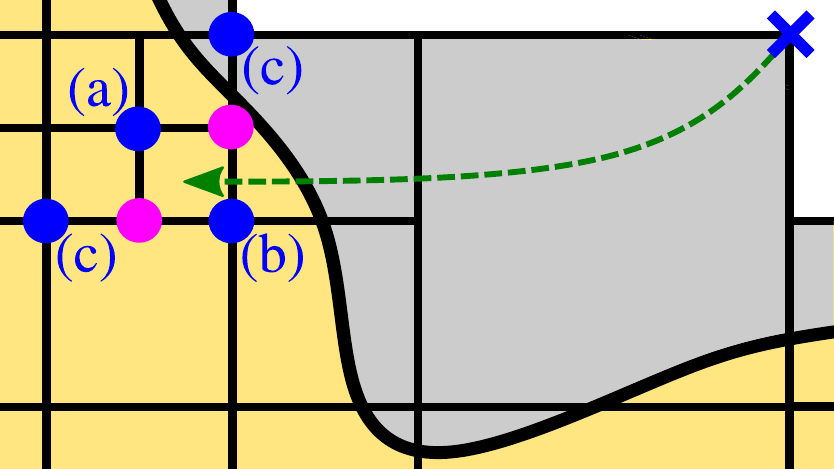}
    \caption{Close-up of Figure~\ref{fig:spaces_a}. Assuming that the top right
    ill-posed \ac{dof} is mapped to the well-posed cell pointed by the dashed
    arrow, we mark with letters and classify all \acp{dof} $\tilde{\sigma} \in
    \M_\sigma^\A$, as they are distinguished in~\eqref{eq:agfem_coefficients}.
    In this sense, (a) shows $\tilde{\sigma} \in \M_\sigma^{\A\F}$, only; (b)
    shows $\tilde{\sigma} \in \M_\sigma^{\A\F} \cap \left(\bigcup_{\sigma' \in
    \M^{\A\H}_\sigma} \M^\H_{\sigma'} \right)$; (c) shows $\tilde{\sigma} \in
    \bigcup_{\sigma' \in \M^{\A\H}_\sigma} \M^\H_{\sigma'}$, only. We observe
    that \acp{dof} (c) are only in neighbouring coarser
    cells.}\label{fig:spaces_c}
  \end{figure}

  \item If \underline{$\sigma \in \Sigma^{\ip,\H}$}, then $\sigma$ cannot be
  constrained as in the previous case, i.e.~hanging \ac{dof} constraints have to
  be imposed first, to preserve conformity. According to this, $\sigma$ can be
  constrained by either well-posed or ill-posed free \acp{dof},
  i.e.~$\M^\H_\sigma \subset \Sigma^{\wp,\F} \cup \Sigma^{\ip,\F}$; this is an
  immediate consequence of \eqref{eq:prop-hanging}. If we consider now
  a partition of $\M^\H_\sigma$ into well-posed and ill-posed master \acp{dof}
  and use case \underline{$\sigma \in \Sigma^{\ip,\F}$} to remove ill-posed
  master \acp{dof}, we deduce that
  \[
    \M^\H_\sigma = \left( \M^\H_\sigma \cap \Sigma^{\wp,\F} \right) \cup
    \left( \M^\H_\sigma \cap \Sigma^{\ip,\F} \right) = \left( \M^\H_\sigma
    \cap \Sigma^{\wp,\F} \right) \cup \left( \bigcup_{\sigma' \in
    \M^\H_\sigma \cap \Sigma^{\ip,\F} } \M^\A_{\sigma'} \right) \subset
    \Sigma^{\wp,\F},
  \]
  i.e.~we can compute the constraints in terms of well-posed free \acp{dof}
  only; again the two sets in the right-hand side are not necessarily disjoint.
  After cancelling the Ag\ac{fem} constraints of $\sigma' \in \M^\H_\sigma \cap
  \Sigma^{\ip,\F}$, the constraint coefficient for $\sigma \in \Sigma^{\ip,\H}$
  and $\sigma' \in \M^\H_\sigma$ becomes
  \[
    C^{\H\A}_{\sigma\sigma'} \doteq
      \left\lbrace
      \begin{array}{ll}
        \C^\H_{\sigma\sigma'} & \text{if} \ \sigma' \in \left( \M^\H_\sigma
        \cap \Sigma^{\wp,\F} \right), \ \text{only} \\
        \C^\H_{\sigma\sigma'} + \sum_{ \left( \tilde{\sigma} \in
        \M^\A_\sigma \ \text{s.t.} \ \sigma' \in \M^\H_{\tilde{\sigma}} \right)}
        C^\A_{\sigma\tilde{\sigma}} C^\H_{\tilde{\sigma}\sigma'} & \text{if} \
        \left( \M^\H_\sigma \cap \Sigma^{\wp,\F} \right) \cap \left( \bigcup_{
        \sigma' \in \M^\H_\sigma \cap \Sigma^{\ip,\F} } \M^\A_{\sigma'} \right) \\
        \sum_{ \left( \tilde{\sigma} \in \M^\A_\sigma \ \text{s.t.} \ \sigma' \in
        \M^\H_{\tilde{\sigma}} \right)} C^\A_{\sigma\tilde{\sigma}}
        C^\H_{\tilde{\sigma}\sigma'} & \text{otherwise}.
      \end{array}
      \right.
  \]
\end{enumerate}

The last step to derive the Ag\ac{fe} space is to gather the previous cases,
combining hanging and aggregation \ac{dof} constraints, into a unified form
equivalent to \eqref{eq:agfem_constraints}. Given $\sigma \in
\Sigma^\C$, the set of master \acp{dof} is
\begin{equation}
\M_\sigma\doteq
	\left\lbrace
	\begin{array}{ll}
		\M^\H_\sigma & \text{if} \ \sigma \in \Sigma^{\wp,\H} \\
		\M^\A_\sigma & \text{if} \ \sigma \in \Sigma^{\ip,\F} \\
		\left( \M^\H_\sigma \cap \Sigma^{\wp,\F} \right) \cup \left(
    \bigcup_{\sigma' \in \M^\H_\sigma \cap \Sigma^{\ip,\F} } \M^\A_{\sigma'}
    \right) & \text{if} \ \sigma \in \Sigma^{\ip,\H}.
	\end{array}
  \right.
\label{eq:ag-fe-masters}
\end{equation}
By definition, $\M_\sigma \subset \Sigma^{\wp,\F}$, for all $\sigma \in
\Sigma^\C$, i.e.~all constraints can be solved by free well-posed \acp{dof} and,
thus, there are no cyclic constraint dependencies; see also the constraint
dependency graph represented in Figure~\ref{fig:agfem_constr_graph}. On the
other hand, the constraint coefficient for $\sigma \in \Sigma^\C$ and $\sigma'
\in \M_\sigma$ is
\begin{equation}
C_{\sigma\sigma'} \doteq
  \def\arraystretch{1.2}
	\left\lbrace
	\begin{array}{ll}
		\C^\H_{\sigma\sigma'} & \text{if} \ \sigma \in \Sigma^{\wp,\H} \\
		\C^\A_{\sigma\sigma'} & \text{if} \ \sigma \in \Sigma^{\ip,\F} \\
		\C^{\H\A}_{\sigma\sigma'} & \text{if} \ \sigma \in \Sigma^{\ip,\H}.
	\end{array}
	\right.
  \label{eq:ag-fe-constraints}
\end{equation}
With these notations, the (sequential) \emph{aggregated} or \emph{ag.}~\ac{fe}
space $\V_h^\ag$ obeys to the form stated in ~\eqref{eq:ag-fe-space}.

\section{Numerical analysis}\label{appendix:proofs}

In this appendix, we prove that both the condition number of (a) the mass matrix
associated to the Ag\ac{fe} space defined in \eqref{eq:ag-fe-space} and
(b) the linear system arising from \eqref{eq:weak-PoissonEq} are
bounded. The bounds do not depend on the cut location (but they do depend on the
well-posedness threshold $\eta_0$). We use the notation $A \lesssim B$ (resp. 
$A \gtrsim B$) to represent $A \leq C B$ (resp. $A \geq CB$) for a positive 
constant $C > 0$ independent of the interface-mesh intersection or the mesh 
cells sizes.

\subsection{Mass matrix condition number}
In order to bound the condition number of the mass matrix, we seek to show the
equivalence, for functions in $\V_h^\ag$, between the $L^2(\Omega)$-norm and the
Euclidean norm of well-posed free \acp{dof}. We devote the next paragraphs to
introduce necessary definitions and preliminary results. Given $u_h \in \V_h^\ag$,
let us denote the nodal vector of well-posed free \acp{dof} by $\u$. For a given
$\cell \in \T_h$ and \ac{vef} $f$, the cell- or \ac{vef}-wise coordinate vector is
represented with $\u_\cell$ or $\u_f$ and its characteristic sizes by $h_\cell$
or $h_f$. First, we rely on the maximum and minimum eigenvalues of the local
mass matrix in the physical cell $\cell$ or any of its \acp{vef} $f \in
\mathcal{F}_{\cell}$:
\begin{equation}\label{eq:mass-spectre}
  \lambda_{\min} h_X^{d_X} \| \u_{X} \|_2^2 \leq \| u_h \|_{L^2(X)}^2 \leq 
  \lambda_{\max} h_X^{d_X} \| \u_{X} \|_2^2, \quad \text{for} \ u_h \in 
  \mathcal{V}(\cell),
\end{equation}
with $X = \cell$ or $X = f \in \mathcal{F}_{\cell}$ and $\| \cdot \|_2$ denoting
the Euclidean norm. The values of $\lambda_{\min}$, $\lambda_{\max} > 0$ only
depend on the order of the \ac{fe} space and can be computed for different
orders on $n$-cubes or $n$-simplices~\cite{elman2014finite}. By combining
\eqref{eq:mass-spectre} for $\cell$ and one of its \acp{vef}, we deduce
the bound
\begin{equation}\label{eq:l2-f-bound}
  \| u_h \|_{L^2(\cell)}^2 \gtrsim h_f^{d-d_f} \| u_h \|_{L^2(f)}^2 > 0, 
  \quad \text{for} \ u_h \in \mathcal{V}(\cell), \ f \in \mathcal{F}_{\cell}.
\end{equation}
We observe that \eqref{eq:l2-f-bound} can be applied to any $\cell \in
\T_h^\wp$ and corresponding \acp{vef}, because we are integrating on the whole
objects. If we consider integration on the cut portion of the cell $\Omega \cap
\cell$, \eqref{eq:mass-spectre} also holds, up to a positive constant
that depends on the well-posedness threshold $\eta_0$. This is a consequence of
the following result.
\begin{lemma}\label{lem:l2-norm-cut-bound}
  Given a well-posed cell $\cell \in \T_h^\wp$ and $u_h \in \mathcal{V}(\cell)$,
  there exists $C(\eta_0) > 0$, dependent on the well-posedness threshold
  $\eta_0$, such that $\| u_h \|_{L^2(\Omega \cap \cell)}^2 \geq C(\eta_0) \|
  u_h \|_{L^2(\cell)}^2$.
\end{lemma}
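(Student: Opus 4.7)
The plan is to reduce the problem to the reference cell $\hat{\cell}$ via the homeomorphism $\Phi_\cell$, and then establish a uniform equivalence between the $L^2$ norm over the full reference cell and the $L^2$ norm over any measurable subset whose relative measure is at least $\eta_0$. First, I would pull back $u_h$ to $\hat{u} = u_h \circ \Phi_\cell \in \hat{\mathcal{V}}$, where $\hat{\mathcal{V}}$ is the reference local space (independent of $\cell$). Using that the mesh is a forest-of-trees with uniformly shape-regular maps (Cartesian 1:$2^d$ refinement), the Jacobian of $\Phi_\cell$ is uniformly bounded above and below, so the $L^2$-norm ratio on the physical side and the reference side differ only by a mesh-independent constant, and likewise $|\Phi_\cell^{-1}(\Omega \cap \cell)|/|\hat{\cell}| \gtrsim \eta_\cell \geq \eta_0$.

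With the problem transferred to $\hat{\cell}$, the task reduces to showing: there exists $C(\eta_0) > 0$ such that for every measurable $\hat{S} \subset \hat{\cell}$ with $|\hat{S}| \geq \eta_0 |\hat{\cell}|$ and every $\hat{u} \in \hat{\mathcal{V}}$,
\begin{equation*}
  \|\hat{u}\|_{L^2(\hat{S})}^2 \geq C(\eta_0) \, \|\hat{u}\|_{L^2(\hat{\cell})}^2.
\end{equation*}
I would prove this by contradiction, combining the finite-dimensionality of $\hat{\mathcal{V}}$ with weak-$*$ compactness in $L^\infty(\hat{\cell})$. Suppose the inequality fails: then there exist sequences $\hat{u}_n \in \hat{\mathcal{V}}$ with $\|\hat{u}_n\|_{L^2(\hat{\cell})} = 1$ and measurable $\hat{S}_n$ with $|\hat{S}_n| \geq \eta_0 |\hat{\cell}|$ such that $\|\hat{u}_n\|_{L^2(\hat{S}_n)} \to 0$. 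Since $\hat{\mathcal{V}}$ is finite-dimensional, a subsequence satisfies $\hat{u}_n \to \hat{u}^* \in \hat{\mathcal{V}}$ uniformly, with $\|\hat{u}^*\|_{L^2(\hat{\cell})} = 1$. The indicator functions $\chi_{\hat{S}_n}$ lie in the unit ball of $L^\infty(\hat{\cell})$, so (passing to a further subsequence) they converge weakly-$*$ to some $\chi^* \in L^\infty(\hat{\cell})$ with $0 \leq \chi^* \leq 1$ and $\int_{\hat{\cell}} \chi^* \geq \eta_0 |\hat{\cell}|$. Passing to the limit in $\|\hat{u}_n\|_{L^2(\hat{S}_n)}^2 = \int_{\hat{\cell}} |\hat{u}_n|^2 \chi_{\hat{S}_n}$, using uniform convergence of $|\hat{u}_n|^2$, one obtains $\int_{\hat{\cell}} |\hat{u}^*|^2 \chi^* = 0$. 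Since $\hat{u}^*$ is a nonzero polynomial, its zero set has Lebesgue measure zero, so $\chi^* = 0$ a.e.\ on $\hat{\cell}$, contradicting $\int_{\hat{\cell}} \chi^* \geq \eta_0 |\hat{\cell}| > 0$.

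The constant $C(\eta_0)$ is obtained as the infimum over all admissible $(\hat{S}, \hat{u})$ and, by the contradiction argument, is strictly positive; it depends only on $\eta_0$, the dimension $d$, the reference polytope and the polynomial degree $q$, and is in particular independent of the cut location. Transferring back through $\Phi_\cell$ yields the desired bound $\|u_h\|_{L^2(\Omega \cap \cell)}^2 \geq C(\eta_0) \|u_h\|_{L^2(\cell)}^2$.

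The main obstacle is the simultaneous limit passage in the polynomial $\hat{u}_n$ and the varying subset $\hat{S}_n$: a pure finite-dimensional norm-equivalence argument on a fixed domain is not enough, because the cut geometry is not uniformly controlled beyond its measure. The weak-$*$ compactness of $\{\chi_{\hat{S}_n}\}$ in $L^\infty$, coupled with the uniform convergence of the $|\hat{u}_n|^2$ (free since $\hat{\mathcal{V}}$ is finite-dimensional), is the essential tool that bridges this gap and enables the contradiction without any further geometric regularity on the cuts.
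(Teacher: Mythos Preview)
Your argument is correct. Both you and the paper hinge on the same core fact---a nonzero polynomial cannot vanish on a set of positive measure---but the paper's proof is a brief heuristic: it simply asserts that if the bound failed then some polynomial would vanish on $\Omega\cap\cell$, and concludes $p\equiv 0$. Read literally, that argument only yields, for each \emph{fixed} cut with $|\Omega\cap\cell|>0$, a constant depending on that specific cut (norm equivalence on a finite-dimensional space); it does not address uniformity over all cuts with $\eta_\cell\geq\eta_0$, which is what $C(\eta_0)$ requires.

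Your route genuinely fills that gap: by pulling back to the reference cell and running a compactness argument in both the polynomial and the cut simultaneously (finite-dimensional compactness for $\hat u_n$, weak-$*$ compactness in $L^\infty$ for $\chi_{\hat S_n}$), you obtain a constant that depends only on $\eta_0$, $d$, the reference polytope and the degree $q$. The limit passage you describe is clean because uniform convergence of $|\hat u_n|^2$ pairs correctly with weak-$*$ convergence of the indicators. An alternative, equally rigorous, path would be a Remez-type inequality for multivariate polynomials, which gives the same uniform bound directly; your compactness argument has the advantage of being self-contained and not invoking that machinery.
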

\begin{proof}
  Since we consider a well-posedness threshold $0 < \eta_0 \leq 1$, any cell
  $\cell \in \T_h^\wp$ can be either (i) (full) interior or (ii) cut. For case
  (i), the bound trivially holds. For case (ii), given any polynomial defined in
  the cell, in particular, any shape function, we must have that $\int_{\Omega
  \cap \cell} {p(x)}^2 \geq C(\eta_0) \int_{\cell} {p(x)}^2 > 0$, for a
  bounded, strictly positive, constant $C(\eta_0)$ that depends on $\eta_0$. If
  this were not the case, then we would have that $p(x)$ vanishes in $\Omega
  \cap \cell$, with $\left| \Omega \cap \cell \right| \neq 0$. As $p(x)$ is a
  polynomial, the only possibility is that $p \equiv 0$ in $\cell$. Hence, the
  bound also holds for case (ii).
\end{proof}

\begin{remark}
  We observe that we generally do not have an analogous bound to that of
  Lemma~\ref{lem:l2-norm-cut-bound} for $f \in \mathcal{F}_\cell$, with $\cell
  \in \T_h^\wp$, because $| f \cap \Omega |$ can be arbitrarily small. 
\end{remark}

Now, we consider the partition $\Sigma^{\wp,\F} \doteq \{\Sigma_{\rm
int}^{\wp,\F}, \Sigma_{\rm ext}^{\wp,\F} \}$, where $\Sigma_{\rm int}^{\wp,\F}$
groups \acp{dof} that satisfy Definition~\ref{def:free-well-posed} (i) and
$\Sigma_{\rm ext}^{\wp,\F}$ those that satisfy
Definition~\ref{def:free-well-posed} (ii). We prove next two lemmas that, along
with Lemma~\ref{lem:l2-norm-cut-bound}, allow one to compute a lower bound of the
$L^2(\Omega)$-norm of functions in $\V_h^\ag$ by the Euclidean norm of \acp{dof}
in $\Sigma_{\rm ext}^{\wp,\F}$. Letting $\Sigma_f$ denote the set of local
\acp{dof} in $f \in \mathcal{F}_\cell$, we show, in the first lemma, that for any
$\sigma \in \Sigma_{\rm ext}^{\wp,\F}$, located atop a coarse \ac{vef} $f_{\rm
C}$, we can find a hanging \ac{vef} $f_{\rm H}$ \emph{of a well-posed cell}, with
the same dimension of and owned by $f_{\rm C}$.
\begin{lemma}\label{lemma:hanging-to-owner}
  Given $\sigma \in \Sigma_{\rm ext}^{\wp,\F}$, there exists $\sigma' \in
  \Sigma^{\wp,\H}$, such that $\sigma \in \M_{\sigma'}^\H$, and there exist
  \acp{vef} $f_{\rm C} \in \cell$ and $f_{\rm H} \in \cell'$, with $\cell \in
  \T_h^\ip$ and $\cell' \in \T_h^\wp$, such that $\sigma \in
  \Sigma_{\overline{f_{\rm C}}}$, $\sigma' \in \Sigma_{\overline{f_{\rm H}}}$
  and $\mathrm{dim}(f_{\rm C}) = \mathrm{dim}(f_{\rm H})$.
\end{lemma}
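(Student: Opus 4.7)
The plan is to construct the tuple $(\sigma',\cell,\cell',f_C,f_H)$ in three steps: first, extract $\sigma'$ and $\cell'$ via Definition \ref{def:free-well-posed} (ii); second, produce $\cell$ via the hanging/owner \ac{vef} structure prescribed by Assumption \ref{ass:nonconf} (ii) under 2:1 balance; and third, match dimensions via elementary \ac{vef} incidence on $n$-cubes.

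First, Definition \ref{def:free-well-posed} (ii) applied to $\sigma \in \Sigma_{\rm ext}^{\wp,\F}$ directly yields a witness $\sigma' \in \Sigma^{\wp,\H}$ with $\sigma \in \M_{\sigma'}^\H$, and records that $\sigma$ is located in no cell of $\T_h^\wp$. By the definition of $\Sigma^{\wp,\H}$, we pick a well-posed cell $\cell' \in \T_h^\wp$ hosting $\sigma'$ as a local \ac{dof}; $\sigma'$ sits canonically on some hanging \ac{vef} $\bar f \in \mathcal{F}_{\cell'}$.

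Second, Assumption \ref{ass:nonconf} (ii), together with the single-level hanging-constraint property under 2:1 balance (\cite[Proposition 3.6]{Badia2019b}), supplies an owner \ac{vef} $f' \in \mathcal{F}_{\cell}$ of $\bar f$ in a coarser cell $\cell$, with $\overline{\bar f} \subsetneq \overline{f'}$, and all masters of $\sigma'$ --- in particular $\sigma$ --- lying in $\overline{f'} \subset \overline{\cell}$. Hence $\sigma$ is a local \ac{dof} of $\cell$. Since $\sigma$ is not in $\T_h^\wp$ and $\cell \in \T_h^\act$ (exterior cells carry no \acp{dof}), we conclude $\cell \in \T_h^\ip$.

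Third, for the dimension match, let $d_\sigma$ (resp.\ $d_{\sigma'}$) be the dimension of the minimal \ac{vef} of $\cell$ (resp.\ $\cell'$) whose closure contains the nodal position of $\sigma$ (resp.\ $\sigma'$), and set $k = \max(d_\sigma, d_{\sigma'}) \le d$. By the incidence of \acp{vef} on an $n$-cube, every $r$-dim \ac{vef} is contained in \acp{vef} of every dimension between $r$ and $d$. We can therefore pick $f_C \in \mathcal{F}_{\cell}$ and $f_H \in \mathcal{F}_{\cell'}$, both of dimension $k$, with $\sigma \in \Sigma_{\overline{f_C}}$ and $\sigma' \in \Sigma_{\overline{f_H}}$, so that $\dim(f_C) = \dim(f_H) = k$. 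The delicate point is precisely here: one cannot in general take $f_H = \bar f$ and match its dimension on the coarse side, because for higher-order $Q_q$ Lagrangian elements the master $\sigma$ may sit in the interior of a coarse edge or face of strictly larger dim than the canonical hanging \ac{vef} hosting $\sigma'$ (e.g.\ $\sigma$ is the coarse edge-center while $\sigma'$ is a hanging vertex). The lemma's flexibility in allowing any $f_H, f_C$ whose closure contains the \ac{dof}, combined with the upward incidence of \acp{vef}, sidesteps this.
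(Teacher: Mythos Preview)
Your argument is correct for the lemma as literally stated, but your third step takes a genuinely different route from the paper, and the difference matters downstream.

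The paper's proof keeps $f_{\rm C}$ fixed as the \emph{owner} \ac{vef} of the initial hanging \ac{vef} $f_{\rm H}$; when $\dim(f_{\rm H})<\dim(f_{\rm C})$, it uses the $1{:}2^d$ refinement $\T_h^{\mathrm{rr}(\cell)}$ of the coarse cell to locate a larger hanging \ac{vef} $f_{\rm H}'\in\mathcal{F}_{\cell'}$ with $f_{\rm H}\subset\overline{f_{\rm H}'}\subsetneq f_{\rm C}$ and $\dim(f_{\rm H}')=\dim(f_{\rm C})$. In particular, the paper's pair $(f_{\rm C},f_{\rm H})$ always satisfies the owner relation ``$f_{\rm C}$ owns $f_{\rm H}$''. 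By contrast, you set $k=\max(d_\sigma,d_{\sigma'})$ and pick \emph{any} $k$-dimensional $f_{\rm C}\in\mathcal{F}_\cell$ and $f_{\rm H}\in\mathcal{F}_{\cell'}$ containing $\sigma$ and $\sigma'$ via upward \ac{vef} incidence on $n$-cubes. This is shorter and suffices for the statement, but your $f_{\rm H}$ need not be hanging, let alone owned by your $f_{\rm C}$ (e.g.\ you may select an edge of $\cell'$ pointing away from $\cell$). That owner relation, although omitted from the lemma's formal statement, is exactly what the proof of Lemma~\ref{lemma:l2-hanging-to-l2-coarse-dofs} invokes when it writes $\u_{f_{\rm H}}=\mathbf{C}\,\u_{f_{\rm C}}$; with your construction that identity is unavailable. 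So your approach buys brevity for this lemma in isolation, while the paper's approach buys the structural link that the next lemma relies on.
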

\begin{proof}
  We use Figure~\ref{fig:lemma_3} to illustrate the proof. Given $\sigma \in
  \Sigma_{\rm ext}^{\wp,\F}$, by Definition~\ref{def:free-well-posed} (ii), there
  exists $\sigma' \in \Sigma^{\wp,\H}$, such that $\sigma \in \M_{\sigma'}^\H$.
  Note that $\sigma$ and $\sigma'$ are related by a nontrivial constraint, by
  definition of $\M_{\sigma'}^\H$ (see Section~\ref{sec:std-fe-spaces}). In
  addition, by recalling how hanging \acp{dof} and its constraining \acp{dof}
  are related~\cite[Proposition 3.6]{Badia2019b}, there exist $f_{\rm C} \in
  \mathcal{F}_\cell$, with $\cell \in \T_h^\ip$, and $f_{\rm H} \in
  \mathcal{F}_{\cell'}$, with $\cell' \in \T_h^\wp$, such that $f_{\rm C}$ is the
  owner \ac{vef} of $f_{\rm H}$, $\sigma \in \Sigma_{\overline{f_{\rm C}}}$ and
  $\sigma' \in \Sigma_{\overline{f_{\rm H}}}$. If $\mathrm{dim}(f_{\rm H}) =
  \mathrm{dim}(f_{\rm C})$, the result follows immediately. Otherwise, let us
  denote by $\T_h^{\mathrm{rr}(\cell)}$ the mesh resulting from applying the
  1:$2^d$ isotropic refinement rule once to $\cell$. $\cell$ and $\cell'$ only
  differ by one level of refinement, by the 2:1 balance assumption, and
  $\T_h^{\mathrm{rr}(\cell)} \cup \cell'$ forms a conforming mesh, by the
  construction of the refinement rule. It follows that $f_{\rm H}$ is one of the
  \acp{vef} on the boundary of $\T_h^{\mathrm{rr}(\cell)}$. As $f_{\rm C}$ is the
  only \ac{vef} of $\cell$ that contains $f_{\rm H}$ and the refinement rule
  implies a nontrivial partition of all \acp{vef} of $\cell$, there exists
  $f_{\rm H}' \subsetneq f_{\rm C}$, such that $\mathrm{dim}(f_{\rm H}') =
  \mathrm{dim}(f_{\rm C})$, $f_{\rm H} \subsetneq \overline{f_{\rm H}'}$ and
  $f_{\rm H}' \in \mathcal{F}_{\cell'}$. Clearly, $f_{\rm H}'$ is also a hanging
  \ac{vef} of $\T_h$, with $f_{\rm C}$ as owner and $\sigma' \in
  \Sigma_{\overline{f_{\rm H}'}}$.
\end{proof}
\begin{figure}[ht!]
  \centering
  \includegraphics[width=0.36\textwidth, trim=0.0cm 0.02cm 0.0cm 0.05cm, clip]{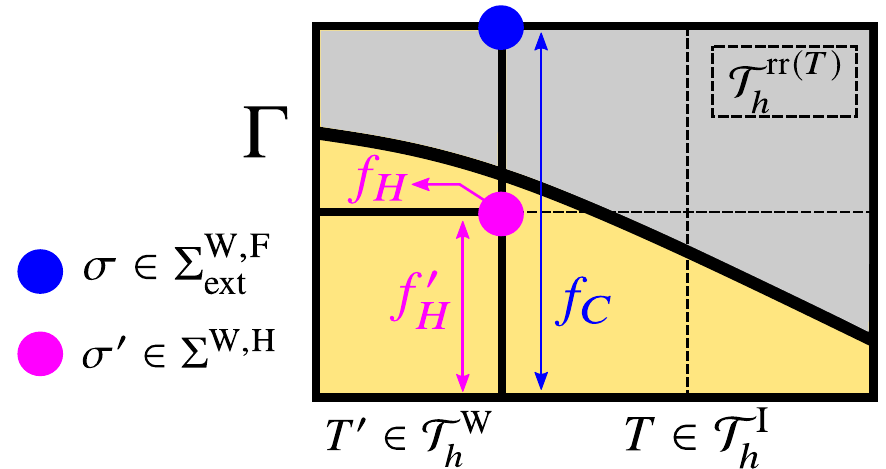}
  \caption{A 2D example to illustrate the proof of 
  Lemma~\ref{lemma:hanging-to-owner}.}
  \label{fig:lemma_3}
\end{figure}
The fact that $f_{\rm C}$ and $f_{\rm H}$ in Lemma~\ref{lemma:hanging-to-owner}
have the same dimension is key to prove the following bound.
\begin{lemma}\label{lemma:l2-hanging-to-l2-coarse-dofs}
  Given $\sigma \in \Sigma_{\rm ext}^{\wp,\F}$, atop a \ac{vef} $f_{\rm C} \in
  \mathcal{F}_\cell$, with $\cell \in \T_h^\ip$, we have the bound
  \[
    \| u_h \|_{L^2(f_{\rm H})}^2 \gtrsim  h_{f_{\rm C}}^{d_{f_{\rm C}}} 
    \| \u_{f_{\rm C}} \|_2^2, \quad \text{for} \ u_h \in \V_h^\ag,
  \]
  where $f_{\rm H}$ is a hanging \ac{vef}, owned by $f_{\rm C}$, $f_{\rm H} \in
  \mathcal{F}_{\cell}$, $\cell \in \T_h^\wp$, such that $\mathrm{dim}(f_{\rm C}) =
  \mathrm{dim}(f_{\rm H})$.
\end{lemma}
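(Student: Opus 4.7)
The plan is to reduce the claim to a polynomial norm equivalence on the reference \ac{vef} combined with the lower mass-matrix eigenvalue bound \eqref{eq:mass-spectre} stated on the full VEF $f_{\rm C}$. The key observation is that, although $f_{\rm H}$ lives on a well-posed cell and $f_{\rm C}$ on an ill-posed one, both carry the same polynomial trace of $u_h$, thanks to $\mathrm{dim}(f_{\rm C}) = \mathrm{dim}(f_{\rm H})$ and $\overline{f_{\rm H}} \subseteq \overline{f_{\rm C}}$.

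First, I would combine the continuity of $u_h \in \V_h^\ag \subset \mathcal{C}^0(\Omega^\act)$ with $\overline{f_{\rm H}} \subseteq \overline{f_{\rm C}}$ (which follows from $f_{\rm C}$ being the owner VEF of $f_{\rm H}$ in Assumption~\ref{ass:nonconf}~(ii)) to obtain $u_h|_{f_{\rm H}} = (u_h|_{f_{\rm C}})|_{f_{\rm H}}$. Since $u_h|_{\cell} \in \V(\cell)$ and $f_{\rm C} \in \mathcal{F}_{\cell}$, the trace $u_h|_{f_{\rm C}}$ is a polynomial of degree at most $q$ in $d_{f_{\rm C}}$ variables, uniquely determined by the Lagrangian nodal values $\u_{f_{\rm C}}$ on $\overline{f_{\rm C}}$, as the remaining shape functions of $\V(\cell)$ vanish on $f_{\rm C}$. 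Applying the lower bound in \eqref{eq:mass-spectre} with $X = f_{\rm C}$ then yields $\| u_h \|_{L^2(f_{\rm C})}^2 \gtrsim h_{f_{\rm C}}^{d_{f_{\rm C}}} \| \u_{f_{\rm C}} \|_2^2$. Crucially, this bound is stated on the \emph{full} VEF $f_{\rm C}$, so no cut-aware correction in the spirit of Lemma~\ref{lem:l2-norm-cut-bound} is required, even though $\cell \in \T_h^\ip$.

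The main step is to establish the reverse-type bound $\| p \|_{L^2(f_{\rm H})}^2 \gtrsim \| p \|_{L^2(f_{\rm C})}^2$ for every polynomial $p$ of degree at most $q$ on $f_{\rm C}$, with an $h$-independent constant. Mapping $f_{\rm C}$ affinely onto its reference VEF $\hat f$, the hypothesis $\mathrm{dim}(f_{\rm H}) = \mathrm{dim}(f_{\rm C})$ together with the 1:$2^d$ isotropic refinement rule and the 2:1 balance condition guarantee that the image $\hat g$ of $f_{\rm H}$ is one of finitely many fixed subregions of $\hat f$, each of measure $2^{-d_{f_{\rm C}}}|\hat f|$. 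On the finite-dimensional space of polynomials of degree $\leq q$ on $\hat f$, the map $\hat p \mapsto \| \hat p \|_{L^2(\hat g)}$ is a norm (a nontrivial polynomial cannot vanish on a set of positive measure), hence equivalent to $\| \hat p \|_{L^2(\hat f)}$ with a uniform constant. Pulling back cancels the Jacobian on both sides, and chaining with the mass-matrix bound from the previous paragraph produces the claim.

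The main obstacle is ensuring that the constant in the polynomial norm equivalence does not blow up under mesh refinement. This is handled by the fact that only finitely many canonical configurations of $\hat g \subset \hat f$ arise in a 2:1-balanced isotropic forest-of-trees mesh, so taking the worst of the (finitely many) equivalence constants provides a uniform bound independent of $h$ and of the cut location.
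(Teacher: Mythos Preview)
Your proof is correct and rests on the same core observation as the paper's: a polynomial of degree at most $q$ that vanishes on the positive-measure subset $f_{\rm H}\subset f_{\rm C}$ must vanish on all of $f_{\rm C}$, so the $L^2(f_{\rm H})$-norm controls the full trace on $f_{\rm C}$ with a uniform constant depending only on the finitely many reference configurations.

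The route, however, is organised differently. The paper works algebraically with nodal vectors: it writes the hanging-DOF relation $\u_{f_{\rm H}}=\mathbf{C}\,\u_{f_{\rm C}}$, expresses $\|u_h\|_{L^2(f_{\rm H})}^2=\u_{f_{\rm C}}^{T}\mathbf{C}^{T}\mathbf{M}_{f_{\rm H}}\mathbf{C}\,\u_{f_{\rm C}}$, and then bounds this below via the smallest eigenvalue of the generalised problem $\mathbf{C}^{T}\mathbf{M}_{f_{\rm H}}\mathbf{C}\,\mathbf{v}=\lambda\,\mathbf{M}_{f_{\rm C}}\mathbf{v}$, which is strictly positive by the same ``polynomial vanishing on a set of positive measure'' argument. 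You instead insert the intermediate norm $\|u_h\|_{L^2(f_{\rm C})}$: first the reference-element norm equivalence gives $\|u_h\|_{L^2(f_{\rm H})}^2\gtrsim\|u_h\|_{L^2(f_{\rm C})}^2$, and then the lower mass-matrix bound \eqref{eq:mass-spectre} on the full VEF $f_{\rm C}$ yields $\gtrsim h_{f_{\rm C}}^{d_{f_{\rm C}}}\|\u_{f_{\rm C}}\|_2^2$. Your version is a bit more elementary in that it never introduces the constraint matrix $\mathbf{C}$ or the generalised eigenvalue problem; the paper's version makes the dependence on the hanging-DOF constraints more explicit, which ties in naturally with the surrounding discussion of constraint coefficients in~\eqref{eq:bounds-K}. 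Both deliver the same uniform constant by reducing to finitely many reference configurations.
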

\begin{proof}
  First we see that, by Lemma~\ref{lemma:hanging-to-owner}, we can find $f_{\rm
  H}$ satisfying the hypotheses. As seen in \eqref{eq:std-fe-space} of
  Section~\ref{sec:std-fe-spaces}, we have that hanging \ac{dof} linear
  constraints, defined for \acp{dof} in $\overline{f_{\rm H}}$, lead to the
  relation $\u_{f_{\rm H}} = \mathbf{C} \u_{f_{\rm C}}$, where the coefficients
  of $\mathbf{C}$ are given by $\phi^{\sigma'} (\x^\sigma)$ with $\sigma \in
  \Sigma_{\overline{f_{\rm H}}}$ and $\sigma' \in \Sigma_{\overline{f_{\rm
  C}}}$. Coefficients $\phi^{\sigma'} (\x^\sigma)$ of $\mathbf{C}$ can be
  computed in a reference cell $\hat{\cell}$, by generating the mesh
  $\T_h^{\mathrm{rr}(\cell)}$ and evaluating the shape functions of $\hat{\cell}$
  at its nodes. Since shape functions are pointwise bounded, $\left|
  \phi^{\sigma'} (\x^\sigma) \right|$ is bounded above, independently of mesh
  size and cuts. Using the above relation, we have that
  \[
    \| u_h \|_{L^2(f_{\rm H})}^2 = \u_{f_{\rm H}}^T \mathbf{M}_{f_{\rm H}} 
    \u_{f_{\rm H}} = \u_{f_{\rm C}}^T \mathbf{C}^{T} \mathbf{M}_{f_{\rm H}} 
    \mathbf{C} \u_{f_{\rm C}} = \lambda \ \u_{f_{\rm C}}^T 
    \mathbf{M}_{f_{\rm C}} \u_{f_{\rm C}},
  \]
  where $\mathbf{M}_f$ denotes the local \ac{fe} mass matrix on \ac{vef} $f$
  and, in the last equality, we consider the generalized eigenvalue problem
  $\mathbf{C}^{T} \mathbf{M}_{f_{\rm H}} \mathbf{C} \u_{f_{\rm C}} =
  \lambda \ \mathbf{M}_{f_{\rm C}} \u_{f_{\rm C}}$. Since $\mathbf{C}^{T} 
  \mathbf{M}_{f_{\rm H}}
  \mathbf{C}$, $\mathbf{M}_{f_{\rm C}}$ are symmetric and $\mathbf{M}_{f_{\rm
  C}}$ is positive definite (due to \eqref{eq:mass-spectre}), the
  eigenvalues of the above problem are real. Moreover, the same argument in the
  proof of Lemma~\ref{lem:l2-norm-cut-bound} ensures that, if a polynomial
  vanishes in $f_{\rm H}$, it must also vanish in $f_{\rm C}$. Therefore, we
  have that the smallest eigenvalue must be strictly positive,
  i.e.~$\lambda_{\min} > 0$. It suffices to combine this result with
  \eqref{eq:mass-spectre} applied on $\mathbf{M}_{f_{\rm C}}$ to see
  that $\| u_h \|_{L^2(f_{\rm H})}^2 \gtrsim \lambda_{\min} \ 
    h_{f_{\rm C}}^{d_{f_{\rm C}}} \u_{f_{\rm C}}^T \u_{f_{\rm C}} > 0$.
\end{proof}

We need now some auxiliary definitions: Given $\sigma \in \Sigma^{\wp,\F}$, we
let $\mathcal{S}_\sigma \doteq \{ \sigma' \in \Sigma^\C : \sigma \in
\M_{\sigma'} \}$ denote the set of \acp{dof} constrained by $\sigma$ (either by
mesh nonconformity or aggregation), the global shape function associated to
$\sigma$, after solving constraints, is given by $\tilde{\phi}^\sigma \doteq
\phi^\sigma + \sum_{\sigma' \in \mathcal{S}_\sigma} C_{\sigma'\sigma}
\phi^{\sigma'}$ and we let $\T_h^\sigma \doteq \{ \cell \in \T_h : \mathrm{supp} 
(\tilde{\phi}^\sigma) \cap \cell \neq 0 \}$ denote the set of cells where
$\tilde{\phi}^\sigma$ has local support. We observe that 
\begin{equation}\label{eq:bounds-K}
  0 < K_{\min} \leq \left| C_{\sigma'\sigma} \right| \leq K_{\max},
\end{equation}
where the bounds are independent of the size of the physical cell $h_\cell$ or
cut location; this result has already been argued in
Lemma~\ref{lemma:l2-hanging-to-l2-coarse-dofs} for hanging \ac{dof} constraints
and~\cite[Lemma 5.1]{Badia2018} for aggregation \ac{dof} constraints. Apart from
that, we define $h_\sigma \doteq \max_{\cell \in \T_h^\sigma} h_\cell$. We note
that the $h_\cell$ in the definition of $h_\sigma$ differ by a bounded value,
that depends on the 2:1 0-balance restriction and the maximum aggregation
distance, i.e.~$h_\sigma = C(\cell) h_\cell$, for any $\cell \in \T_h^\sigma$.

We are now in position to show the sought-after equivalence between the $L^2$
norm of functions in $\V_h^\ag$ and the Euclidean norm of its nodal values.
\begin{proposition}\label{prop:bound-mass-matrix}
  Given $u_h \in \V_h^\ag$, the following bound holds:
  \[ \| \u \|_\sigma^2 \lesssim \| u_h \|_{L^2(\Omega)}^2 \lesssim \| \u
  \|_\sigma^2, \] 
  for $\| \u \|_\sigma^2 \doteq \sum_{\sigma \in \Sigma^{\wp,\F}} h_\sigma^d
  u_\sigma^2$, with $u_\sigma$ the nodal value of $\sigma \in \Sigma^{\wp,\F}$.
\end{proposition}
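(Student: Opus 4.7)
The plan is to prove the two inequalities separately, both by working cell-by-cell and then summing with bounded overlap.

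For the upper bound $\|u_h\|_{L^2(\Omega)}^2 \lesssim \|\u\|_\sigma^2$, I would write $u_h|_\cell = \sum_{\sigma \in \Sigma_\cell} u_\sigma \phi^\sigma|_\cell$, where $\Sigma_\cell$ is the set of local \acp{dof} of $\cell$ and $u_\sigma$ is recovered via the linear constraints from the free well-posed nodal values in $\u$. Applying the right-hand side of \eqref{eq:mass-spectre} cell-wise yields $\|u_h\|_{L^2(\cell)}^2 \leq \lambda_{\max} h_\cell^d \|\u_\cell\|_2^2$. Using \eqref{eq:bounds-K} together with the fact that $|\M_\sigma|$ is uniformly bounded (each constrained \ac{dof} has a bounded number of masters by construction of $\V_h^\ag$), each $u_\sigma^2$ for $\sigma \in \Sigma_\cell$ is dominated by a bounded sum of squared values $u_{\sigma'}^2$ with $\sigma' \in \Sigma^{\wp,\F}$ and $\cell \in \T_h^{\sigma'}$. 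Summing over $\cell \in \T_h$ and swapping summations, together with the fact that $|\T_h^{\sigma'}|$ is uniformly bounded (by 2:1 balance plus the bound on aggregate size) and $h_\cell \sim h_{\sigma'}$ for $\cell \in \T_h^{\sigma'}$, yields $\|u_h\|_{L^2(\Omega)}^2 \leq \sum_\cell \|u_h\|_{L^2(\cell)}^2 \lesssim \sum_{\sigma' \in \Sigma^{\wp,\F}} h_{\sigma'}^d u_{\sigma'}^2 = \|\u\|_\sigma^2$.

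For the lower bound $\|\u\|_\sigma^2 \lesssim \|u_h\|_{L^2(\Omega)}^2$, the strategy is to associate to every $\sigma \in \Sigma^{\wp,\F}$ a well-posed cell $\cell^\sigma \in \T_h^\wp$ such that $h_\sigma^d u_\sigma^2 \lesssim \|u_h\|_{L^2(\Omega \cap \cell^\sigma)}^2$, and then sum using that the map $\sigma \mapsto \cell^\sigma$ has bounded multiplicity. I would split by the partition $\Sigma^{\wp,\F} = \Sigma_{\rm int}^{\wp,\F} \cup \Sigma_{\rm ext}^{\wp,\F}$. For $\sigma \in \Sigma_{\rm int}^{\wp,\F}$, by Definition~\ref{def:free-well-posed}~(i) there exists $\cell^\sigma \in \T_h^\wp$ containing $\sigma$ as a local \ac{dof}; combining Lemma~\ref{lem:l2-norm-cut-bound} with the left-hand side of \eqref{eq:mass-spectre} applied on $\cell^\sigma$ gives $\|u_h\|_{L^2(\Omega \cap \cell^\sigma)}^2 \gtrsim h_{\cell^\sigma}^d \|\u_{\cell^\sigma}\|_2^2 \geq h_{\cell^\sigma}^d u_\sigma^2 \sim h_\sigma^d u_\sigma^2$.

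For $\sigma \in \Sigma_{\rm ext}^{\wp,\F}$, I would invoke Lemma~\ref{lemma:hanging-to-owner} to find the coarse \ac{vef} $f_{\rm C} \in \mathcal{F}_\cell$ with $\cell \in \T_h^\ip$ hosting $\sigma$, together with a hanging \ac{vef} $f_{\rm H}$ of some $\cell^\sigma \in \T_h^\wp$ of the same dimension as $f_{\rm C}$. Lemma~\ref{lemma:l2-hanging-to-l2-coarse-dofs} then gives $\|u_h\|_{L^2(f_{\rm H})}^2 \gtrsim h_{f_{\rm C}}^{d_{f_{\rm C}}} u_\sigma^2$. A face-to-volume estimate via \eqref{eq:l2-f-bound} on $\cell^\sigma$ and $f_{\rm H}$ provides $\|u_h\|_{L^2(\cell^\sigma)}^2 \gtrsim h_{f_{\rm H}}^{d-d_{f_{\rm H}}} \|u_h\|_{L^2(f_{\rm H})}^2$, and Lemma~\ref{lem:l2-norm-cut-bound} applied to $\cell^\sigma$ removes the cut. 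Using $d_{f_{\rm C}} = d_{f_{\rm H}}$ and $h_{f_{\rm H}} \sim h_{f_{\rm C}} \sim h_\sigma$ (by the 2:1 balance condition and the fact that $h_\sigma$ lumps together comparable cell sizes) yields $\|u_h\|_{L^2(\Omega \cap \cell^\sigma)}^2 \gtrsim h_\sigma^d u_\sigma^2$. Finally, summing the cell-wise bounds over $\sigma \in \Sigma^{\wp,\F}$ and noting that each $\cell \in \T_h^\wp$ is chosen as $\cell^\sigma$ by only a uniformly bounded number of \acp{dof} (each cell has $O(1)$ local \acp{dof}, and the set of $\sigma \in \Sigma_{\rm ext}^{\wp,\F}$ whose Lemma~\ref{lemma:hanging-to-owner} construction lands on a given $\cell$ is likewise bounded by 2:1 balance) completes the proof.

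The main obstacle I anticipate is the case $\sigma \in \Sigma_{\rm ext}^{\wp,\F}$: these \acp{dof} have no local support in a well-posed cell, so one must locate a suitable nearby well-posed cell via the coarse/hanging \ac{vef} pairing of Lemma~\ref{lemma:hanging-to-owner}. Once that geometric bridge is established, chaining Lemma~\ref{lemma:l2-hanging-to-l2-coarse-dofs} with the face-to-volume and cut-removal estimates is essentially mechanical, and the remaining bookkeeping reduces to a standard bounded-multiplicity argument.
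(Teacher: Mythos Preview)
Your proposal is correct and follows essentially the same route as the paper: the upper bound via the triangle inequality together with the boundedness of the constraint coefficients~\eqref{eq:bounds-K}, and the lower bound via the split $\Sigma^{\wp,\F}=\Sigma_{\rm int}^{\wp,\F}\cup\Sigma_{\rm ext}^{\wp,\F}$, using Lemma~\ref{lem:l2-norm-cut-bound} and~\eqref{eq:mass-spectre} for the interior part and chaining Lemmas~\ref{lemma:hanging-to-owner}--\ref{lemma:l2-hanging-to-l2-coarse-dofs} with the face-to-volume estimate~\eqref{eq:l2-f-bound} for the exterior part. The only cosmetic difference is that the paper organises the exterior sum over the coarse \acp{vef} $f_{\rm C}\in\mathcal{F}_{\rm C}$ rather than over individual $\sigma$, which packages your bounded-multiplicity bookkeeping into a single chain of inequalities starting from $\sum_{\cell\in\T_h^\wp}\|u_h\|_{L^2(\cell)}^2$.
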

\begin{proof}
  The upper bound straightforwardly follows from considering triangular
  inequality repeatedly and the fact that $\left| C_{\sigma'\sigma} \right|$ is
  bounded above (see~\eqref{eq:bounds-K}). For the lower bound, we use the
  results above. First, we see that, by Lemma~\ref{lem:l2-norm-cut-bound},
  \[
    \| u_h \|_{L^2(\Omega)}^2 \geq \| u_h \|_{L^2(\Omega^\wp)}^2 = \sum_{\cell 
    \in \T_h^\wp} \| u_h \|_{L^2(\Omega \cap \cell)}^2 \gtrsim 
    \sum_{\cell \in \T_h^\wp} \| u_h \|_{L^2(\cell)}^2.
  \]
  Then, by \eqref{eq:mass-spectre}, we have the bound for $\Sigma_{\rm
  int}^{\wp,\F}$, that is,
  \[
    \sum_{\cell \in \T_h^\wp} \| u_h \|_{L^2(\cell)}^2 \gtrsim
    \sum_{\cell \in \T_h^\wp} h_\cell^d \| \u_\cell \|_2^2 \geq \sum_{\sigma 
    \in \Sigma_{\rm int}^{\wp,\F}} h_\sigma^d u_\sigma^2.
  \]
  On the other hand, we let $\mathcal{F}_{\rm C}$ denote all the set of
  \acp{vef} $f_{\rm C}$, that contain at least one \ac{dof} of $\Sigma_{\rm
  ext}^{\wp,\F}$. Using Lemma~\ref{lemma:hanging-to-owner}, we pick for each
  $f_{\rm C} \in \mathcal{F}_{\rm C}$ a hanging \ac{vef} $f_{\rm H}$ of the same
  dimension of $f_{\rm C}$, with $f_{\rm H}$ touching a well-posed cell. We
  denote the set of all $f_{\rm H}$ by $\mathcal{F}_{\rm H}$. By
  \eqref{eq:l2-f-bound} and
  Lemma~\ref{lemma:l2-hanging-to-l2-coarse-dofs}, we obtain a bound for
  $\Sigma_{\rm ext}^{\wp,\F}$:
  \[
    \sum_{\cell \in \T_h^\wp} \| u_h \|_{L^2(\cell)}^2 \gtrsim 
    \sum_{f_{\rm H} \in \mathcal{F}_{\rm H}} h_{f_{\rm H}}^{d-d_{f_{\rm H}}} 
    \| u_h \|_{L^2(f_H)}^2 \gtrsim \sum_{f_{\rm C} \in \mathcal{F}_{\rm C}} 
    h_{f_{\rm C}}^d \| \u_{f_{\rm C}} \|_2^2 \gtrsim \sum_{\sigma \in 
    \Sigma_{\rm ext}^{\wp,\F}} h_\sigma^d u_\sigma^2.
  \]
  Combining the two bounds together, we get
  \[
    \| u_h \|_{L^2(\Omega)}^2 \gtrsim \sum_{\sigma \in \Sigma^{\wp,\F}} 
    h_\sigma^d u_\sigma^2 \gtrsim \| \u \|_\sigma^2.
  \]
\end{proof}
Note that the constants in Proposition~\ref{prop:bound-mass-matrix} depend on
the well-posedness threshold via Lemma~\ref{lem:l2-norm-cut-bound}, but are
independent on the cut location.  The following result is a direct consequence of
Proposition~\ref{prop:bound-mass-matrix}.
\begin{corollary}\label{corol:cn-mass-matrix}
  The mass matrix $\mathbf{M}$ related to the aggregated \ac{fe} space $\V_h^\ag$
  is bounded by $k(\mathbf{M}) \leq C$, for a positive constant $C > 0$
  independent on cut location.
\end{corollary}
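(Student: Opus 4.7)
The plan is to derive the condition number bound as an immediate matrix-algebra consequence of the norm equivalence in Proposition~\ref{prop:bound-mass-matrix}. First, I would observe that, by definition of the mass matrix $\mathbf{M}$ in the basis of aggregated shape functions $\{\tilde{\phi}^\sigma\}_{\sigma \in \Sigma^{\wp,\F}}$, for every $u_h \in \V_h^\ag$ with nodal coefficient vector $\mathbf{u}$ indexed by $\Sigma^{\wp,\F}$, the Rayleigh quotient evaluates to
\[
  \mathbf{u}^T \mathbf{M} \mathbf{u} \;=\; \| u_h \|_{L^2(\Omega)}^2.
\]

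Next, invoking Proposition~\ref{prop:bound-mass-matrix} yields constants $c_1, c_2 > 0$, independent of cut location (although depending on $\eta_0$ and the polynomial degree $q$), such that
\[
  c_1 \, \| \mathbf{u} \|_\sigma^2 \;\le\; \mathbf{u}^T \mathbf{M} \mathbf{u} \;\le\; c_2 \, \| \mathbf{u} \|_\sigma^2 .
\]
Introducing the diagonal matrix $\mathbf{D}$ with entries $D_{\sigma\sigma} = h_\sigma^d$, this rewrites as the matrix inequality $c_1 \mathbf{D} \le \mathbf{M} \le c_2 \mathbf{D}$ in the sense of symmetric positive definite forms.

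Finally, I would consider the natural diagonally scaled matrix $\tilde{\mathbf{M}} \doteq \mathbf{D}^{-1/2} \mathbf{M} \mathbf{D}^{-1/2}$. By the generalized min-max principle, all eigenvalues of $\tilde{\mathbf{M}}$ lie in the interval $[c_1, c_2]$, so $k(\tilde{\mathbf{M}}) \le c_2 / c_1$, independent of cut location. Under the quasi-uniform regime used in the numerical experiments of Section~\ref{sec:numericals}, where $h_\sigma$ is bounded above and below by a fixed multiple of a common characteristic size $h$, $\mathbf{D}$ coincides with a scalar multiple of the identity up to bounded constants, and the condition number bound transfers verbatim to $\mathbf{M}$ itself.

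There is essentially no analytical obstacle to overcome: all the heavy lifting, namely bypassing the arbitrarily small cut fractions via Lemma~\ref{lem:l2-norm-cut-bound} and handling the free well-posed DOFs sitting outside $\T_h^\wp$ through Lemmas~\ref{lemma:hanging-to-owner}--\ref{lemma:l2-hanging-to-l2-coarse-dofs}, is already absorbed into Proposition~\ref{prop:bound-mass-matrix}. The only point worth making explicit is that the constants $c_1, c_2$ inherit from Proposition~\ref{prop:bound-mass-matrix} the property of depending on $\eta_0$ but not on the relative position of $\partial\Omega$ with respect to $\T_h$, which is precisely the cut-independence claimed in the corollary statement.
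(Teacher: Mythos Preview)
Your reduction to Proposition~\ref{prop:bound-mass-matrix} and the passage to the diagonally scaled matrix $\tilde{\mathbf{M}}=\mathbf{D}^{-1/2}\mathbf{M}\mathbf{D}^{-1/2}$ are exactly the intended route; the paper itself states the corollary as ``a direct consequence'' of that proposition and gives no further argument.

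There is, however, a real slip in your last step. You transfer the bound from $\tilde{\mathbf{M}}$ to $\mathbf{M}$ by invoking a ``quasi-uniform regime used in the numerical experiments of Section~\ref{sec:numericals}''. That premise is false: the whole paper, and Section~\ref{sec:numericals} in particular, is set on \emph{adaptive} forest-of-trees meshes, where $h_\sigma$ varies across levels and no global quasi-uniformity is assumed. The corollary must hold in that adaptive setting, not merely in a uniform one.

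The correct way to close the argument is to observe that the statement only claims cut-independence of $C$, not mesh-independence. From $c_1\mathbf{D}\le\mathbf{M}\le c_2\mathbf{D}$ you get
\[
  k(\mathbf{M}) \;\le\; \frac{c_2}{c_1}\,\frac{\max_{\sigma}h_\sigma^{\,d}}{\min_{\sigma}h_\sigma^{\,d}}.
\]
The factor $c_2/c_1$ is cut-independent by Proposition~\ref{prop:bound-mass-matrix}. For the ratio of $h_\sigma$'s, recall from the text just above Proposition~\ref{prop:bound-mass-matrix} that $h_\sigma=C(\cell)\,h_\cell$ for any $\cell\in\T_h^\sigma$, with $C(\cell)$ bounded in terms of the 2:1 balance and the maximum aggregation distance only; hence $\max_\sigma h_\sigma/\min_\sigma h_\sigma$ is controlled by $\max_{\cell\in\T_h}h_\cell/\min_{\cell\in\T_h}h_\cell$ up to a cut-independent constant. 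This ratio is a property of the fixed background mesh $\T_h$, not of the position of $\partial\Omega$, so it can be absorbed into $C$. That is all that is needed; quasi-uniformity plays no role.
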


\subsection{Well-posedness of the unfitted \ac{fe} Problem~\eqref{eq:weak-PoissonEq}}
\label{appendix:proofs-wp}

Our goal now is to prove coercivity and continuity of the bilinear form
in~\eqref{eq:weak-PoissonEq}. To this end, let us assume that we bound the
maximum level of refinement for any triangulation $\T_h$ built recursively as a
forest-of-trees; this is the case in practice, since available memory is
limited. Hence, there exists $h_{\min} > 0$ such that $\min_{T \in \T_h} h_\cell
\geq h_{\min} > 0$. We begin with a trace
inequality that is key to prove coercivity:

Given $\cell \in \T_h^\ip$ and $\cell_1, \ldots, \cell_{m_\cell} \in \T_h^\wp$,
$m_\cell \geq 1$, the set of constraining well-posed cells (i.e.~those
constraining at least one \ac{dof} of $\cell$), we let
\[
  \Omega_\cell^\act \doteq \left( \cell \cup \bigcup_{i=1}^{m_\cell} 
  \cell_i \right) \ \text{and} \ \Omega_\cell \doteq \Omega \cap 
  \Omega_\cell^\act.
\]
Note that $m_\cell$ is bounded, due to the 2:1 0-balance restriction and the
fact that the number of neighbour cells is bounded. In case that $\cell \in
\T_h^\wp$, the definitions above become $\Omega_\cell^\act = \cell$ and
$\Omega_\cell = \Omega \cap \cell$.
\begin{lemma}\label{lem:trace-inv-grads}
  Given $u_h \in \V_h^\ag$ and $\cell \in \T_h^\act$, there exists $C(\eta_0) > 0$,
  such that 
  \[
    \| \boldsymbol{n} \cdot \nabla u_h \|_{L^2( \Gamma_{\rm D} \cap \cell )}^2
  \leq C(\eta_0) h_\cell^{-1} \| \nabla u_h \|_{L^2( \Omega_\cell )}^2.
  \]
\end{lemma}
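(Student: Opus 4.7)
The plan is to split the argument into two cases depending on whether $\cell$ is well-posed or ill-posed, relying on two ingredients: (i) a standard discrete trace inverse inequality for polynomials of bounded degree on a cell cut by the (smooth) zero level set of $\varphi^{\ls}$, i.e.\ $\| p \|_{L^2(\Gamma_{\rm D} \cap \cell)}^2 \lesssim h_\cell^{-1} \| p \|_{L^2(\cell)}^2$, whose constant depends on the curvature of $\partial\Omega$ but is independent of how $\partial\Omega$ cuts $\cell$; and (ii) Lemma~\ref{lem:l2-norm-cut-bound}, which transfers control from the full cell to its physical cut portion, at the cost of a positive constant $C(\eta_0)$. Observe that $\boldsymbol{n}\cdot\nabla u_h|_\cell$ is a polynomial of bounded degree, since $u_h|_\cell \in \mathcal{V}(\cell)$, so both tools apply componentwise to $\nabla u_h$.

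For Case~1, $\cell \in \T_h^\wp$, applying (i) on $\cell$ yields $\| \boldsymbol{n}\cdot\nabla u_h \|_{L^2(\Gamma_{\rm D} \cap \cell)}^2 \lesssim h_\cell^{-1} \| \nabla u_h \|_{L^2(\cell)}^2$, and Lemma~\ref{lem:l2-norm-cut-bound} (applied to each component of $\nabla u_h$) gives $\| \nabla u_h \|_{L^2(\cell)}^2 \leq C(\eta_0)^{-1} \| \nabla u_h \|_{L^2(\cell \cap \Omega)}^2 \leq C(\eta_0)^{-1} \| \nabla u_h \|_{L^2(\Omega_\cell)}^2$, concluding this case after combining both estimates.

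For Case~2, $\cell \in \T_h^\ip$, I would again apply the discrete trace inverse inequality on $\cell$, reducing the problem to bounding $h_\cell^{-1}\| \nabla u_h \|_{L^2(\cell)}^2$ by $h_\cell^{-1}\| \nabla u_h \|_{L^2(\Omega_\cell)}^2$. By construction of $\V_h^\ag$ (see \eqref{eq:ag-fe-masters}--\eqref{eq:ag-fe-constraints}), all nodal values at $\cell$ are linear combinations of nodal values at the constraining well-posed cells $\cell_1, \ldots, \cell_{m_\cell}$, with coefficients uniformly bounded by~\eqref{eq:bounds-K}. Hence $u_h|_\cell$ is (up to bounded hanging-node contributions carried on top of the aggregation extrapolation from the root $R(\cell)$) a polynomial that equals one of the $u_h|_{\cell_i}$ extended. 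Combining~\eqref{eq:bounds-K} with the uniform bound on $h_\cell/h_{\cell_i}$, provided by the 2:1 balance condition and the aggregate-size bound recalled in Section~\ref{sec:cell-aggr} (plus the fact that $m_\cell$ is uniformly bounded), I obtain $\| \nabla u_h \|_{L^2(\cell)}^2 \lesssim \sum_{i=1}^{m_\cell} \| \nabla u_h \|_{L^2(\cell_i)}^2$. Since each $\cell_i \in \T_h^\wp$, one more application of Lemma~\ref{lem:l2-norm-cut-bound} together with $\bigcup_i (\cell_i \cap \Omega) \subset \Omega_\cell$ closes the argument.

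The main obstacle is the bookkeeping in Case~2: one must rigorously propagate the uniform boundedness of the combined hanging-plus-aggregation constraints~\eqref{eq:bounds-K} to an estimate on a polynomial $L^2$ norm, despite the fact that the local polynomial on $\cell$ is not literally the root-cell polynomial when hanging nodes interfere. The key observation that unlocks the proof is that, regardless of this interference, the degrees of freedom of $u_h|_\cell$ are given by a composition of linear maps with uniformly bounded coefficients, from a bounded number of well-posed cells of comparable size, so the desired mesh- and cut-independent constant $C(\eta_0)$ emerges naturally.
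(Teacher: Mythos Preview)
Your proposal is correct and follows essentially the same route as the paper: a discrete trace inverse inequality on the full cell (the paper packages this as the surface-measure bound $|\Gamma\cap\cell|\,|\cell|^{-1}\lesssim h_\cell^{-1}$ together with the reference to \cite[Lemma~5.6]{Badia2018}), then the boundedness of the combined constraints~\eqref{eq:bounds-K} to transfer control to the well-posed cells, and finally Lemma~\ref{lem:l2-norm-cut-bound} to pass from full well-posed cells to their physical portions. The only cosmetic difference is that the paper writes a single chain $\|\boldsymbol{n}\cdot\nabla u_h\|_{L^2(\Gamma_{\rm D}\cap\cell)}^2 \lesssim h_\cell^{-1}\|\nabla u_h\|_{L^2(\Omega_\cell^\act)}^2 \lesssim C(\eta_0)\,h_\cell^{-1}\|\nabla u_h\|_{L^2(\Omega_\cell)}^2$ that covers both cases at once (since $\Omega_\cell^\act=\cell$ when $\cell\in\T_h^\wp$), whereas you treat well-posed and ill-posed cells separately.
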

\begin{proof}
  We note first that $|\Gamma \cap \cell| |\cell|^{-1} \leq C h_\cell^{-1}$; it
  can be proven for piecewise smooth boundaries for a constant that depends on
  the curvature of the surface patches and the maximum number of patches
  intersecting a cell. Combining this bound with the fact that constraints are
  bounded (cf.~\eqref{eq:bounds-K}), we can readily use the ideas of the proof
  in~\cite[Lemma 5.6]{Badia2018}, followed by Lemma~\ref{lem:l2-norm-cut-bound},
  to prove the result:
  \[
    \| \boldsymbol{n} \cdot \nabla u_h \|_{L^2( \Gamma_{\rm D} \cap \cell 
    )}^2 \lesssim h_\cell^{-1} \| \nabla u_h \|_{L^2( \Omega_\cell^\act )}^2 
    \lesssim C(\eta_0) h_\cell^{-1} \| \nabla u_h \|_{L^2( \Omega_\cell )}^2.
  \]
\end{proof}
We let now $\V(h) \doteq \V_h^\ag + H^2(\Omega) \cap H_0^1(\Omega)$ and define the
following mesh dependent norms for $v \in \V(h)$:
\[
  \begin{aligned}
    \opnormh{v}^2 &\doteq \| \nabla v \|_{L^2(\Omega)}^2 + \sum_{\cell \in 
    \T_h^\act} \beta_\cell h_\cell^{-1} \| v \|_{L^2(\Gamma_{\rm D} \cap 
    \cell)}^2, \\
    \opnormvh{v}^2 &\doteq \opnormh{v}^2 + \sum_{\cell \in 
    \T_h^\act} h_\cell \| \boldsymbol{n} \cdot \nabla{v} \|_{L^2(\Gamma_{\rm D} 
    \cap \cell)}^2.
  \end{aligned}
\]
\begin{remark}
  By Lemma~\ref{lem:trace-inv-grads}, norms $\opnormh{\cdot}$ and 
  $\opnormvh{\cdot}$ are equivalent in $\V_h^\ag$.
  \label{rem:norms-equiv}
\end{remark}
\noindent{In what follows, we assume that $\Omega$ has smoothing properties.
Then we have the Discrete Poincaré-type inequality (see, e.g.~\cite[Lemma
5.8]{Badia2018})}
\begin{equation}\label{eq:norm-is-norm}
  \| v \|_{L^2(\Omega)} \lesssim \opnormh{v}, \quad \text{for} \ \text{any} 
  \ v \in \V(h).
\end{equation}
\begin{theorem}\label{th:wp}
  The aggregated unfitted \ac{fe} problem in~\eqref{eq:weak-PoissonEq} satisfies
  the following bounds:
  \begin{itemize}
    \item[i)] Coercivity: 
      \begin{equation}\label{eq:th-coercivity}
        a(u_h,u_h) \gtrsim \opnormh{u_h}^2, \quad \text{for any} \ u_h \in \V_h^\ag,
      \end{equation}
    \item [ii)] Continuity:
      \begin{equation}\label{eq:th-continuity}
        a(u,v) \lesssim \opnormvh{u} \opnormvh{v}, \quad \text{for any} \ u, 
        v \in \V(h),
      \end{equation}
  \end{itemize} 
  if $\beta_\cell > C(\eta_0)$, for some positive constant $C(\eta_0)$. In this 
  case, there exists one and only one solution of~\eqref{eq:weak-PoissonEq}.
\end{theorem}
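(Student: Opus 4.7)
The plan is to prove coercivity and continuity of $a(\cdot,\cdot)$ following the classical route for Nitsche-type formulations, but adapted to the Ag\ac{fe} setting by exploiting the trace inverse inequality of Lemma~\ref{lem:trace-inv-grads}. Existence and uniqueness will then follow by Lax–Milgram in the finite-dimensional space $\V_h^\ag$.

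For coercivity, I would first take $u=v=u_h$ and observe that the two off-diagonal Nitsche terms in $a(u_h,u_h)$ coincide, so that
\[
a(u_h,u_h)=\|\nabla u_h\|_{L^2(\Omega)}^2+\sum_{\cell\in\T_h^\act}\tau\|u_h\|_{L^2(\Gamma_\mathrm{D}\cap\cell)}^2-2\sum_{\cell\in\T_h^\act}\int_{\Gamma_\mathrm{D}\cap\cell}u_h\,(\boldsymbol n\cdot\nabla u_h)\,\mathrm d\Gamma.
\]
I would bound the cross term cell-wise by Young's inequality with a parameter $\delta>0$, producing a $\delta h_\cell^{-1}\|u_h\|_{L^2(\Gamma_\mathrm D\cap\cell)}^2$ contribution that is absorbed into the penalty term, and a $\delta^{-1}h_\cell\|\boldsymbol n\cdot\nabla u_h\|_{L^2(\Gamma_\mathrm D\cap\cell)}^2$ contribution to which I would apply Lemma~\ref{lem:trace-inv-grads}. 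That lemma yields the bound $C(\eta_0)\delta^{-1}\|\nabla u_h\|_{L^2(\Omega_\cell)}^2$. Summing over $\cell\in\T_h^\act$, the extended regions $\Omega_\cell$ overlap only a bounded number of times (by 2:1 balance and the uniform bound on aggregate diameters recalled in Section~\ref{sec:cell-aggr}), so the total is controlled by a constant multiple of $\|\nabla u_h\|_{L^2(\Omega)}^2$. Choosing $\delta$ small enough and then $\beta_\cell$ larger than some threshold $C(\eta_0)$ absorbs both contributions and leaves a positive multiple of $\|\nabla u_h\|_{L^2(\Omega)}^2+\sum_\cell\beta_\cell h_\cell^{-1}\|u_h\|_{L^2(\Gamma_\mathrm D\cap\cell)}^2=\opnormh{u_h}^2$.

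For continuity, I would apply Cauchy–Schwarz to each of the four integrals in $a(u,v)$: the bulk term is controlled directly by the $H^1$-seminorm parts of $\opnormh{u}\,\opnormh{v}$; the penalty term splits as the product of the two corresponding penalty contributions in $\opnormh{\cdot}$; and each mixed term is bounded cell-wise by $\bigl(h_\cell\|\boldsymbol n\cdot\nabla u\|_{L^2(\Gamma_\mathrm D\cap\cell)}^2\bigr)^{1/2}\bigl(\beta_\cell h_\cell^{-1}\|v\|_{L^2(\Gamma_\mathrm D\cap\cell)}^2\bigr)^{1/2}$ (up to an absorbed $\beta_\cell^{-1/2}$), so that summation and Cauchy–Schwarz in $\ell^2$ produce $\opnormvh{u}\opnormvh{v}$. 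For the well-posedness conclusion, coercivity together with Remark~\ref{rem:norms-equiv} implies that $a(\cdot,\cdot)$ is coercive with respect to $\opnormvh{\cdot}$ on $\V_h^\ag$, continuity gives boundedness on $\V(h)\supset\V_h^\ag$, and the right-hand side $\mathrm b(\cdot)$ is bounded on $\V_h^\ag$ under standard regularity of $f$ and $g$; Lax–Milgram on the finite-dimensional space $\V_h^\ag$ then yields existence and uniqueness of the discrete solution.

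The main obstacle I expect is the careful book-keeping in the coercivity step: the trace inverse inequality returns the gradient norm on the enlarged patch $\Omega_\cell$ rather than on $\cell$ itself, so I have to verify that the number of cells contained in any patch is uniformly bounded (which rests on the 2:1 balance and the aggregation-diameter bound) in order to recover a global $\|\nabla u_h\|_{L^2(\Omega)}^2$ upon summation, without a hidden dependence on $h$ or cut location. Once that bounded-overlap argument is cleanly in place, fixing $\delta$ and the lower bound on $\beta_\cell$ is a routine calculation, and continuity and Lax–Milgram follow with no essentially new difficulty.
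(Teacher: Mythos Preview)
Your approach is essentially the same as the paper's: expand $a(u_h,u_h)$, bound the cross term cell-wise via Young and Lemma~\ref{lem:trace-inv-grads}, use bounded overlap of the patches $\Omega_\cell$ (from 2:1 balance and the aggregate-size bound) to recover $\|\nabla u_h\|_{L^2(\Omega)}^2$, and prove continuity by repeated Cauchy--Schwarz. The paper concludes well-posedness by non-singularity of the finite-dimensional system rather than invoking Lax--Milgram, but this is equivalent.

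One genuine slip in your description: after Young and the trace lemma, the gradient piece carries the factor $\delta^{-1}C(\eta_0)$, so to absorb it into $\|\nabla u_h\|_{L^2(\Omega)}^2$ you must take $\delta$ \emph{large} enough (namely $\delta\gtrsim C(\eta_0)$ times the overlap constant), not small. The penalty term then absorbs $\delta h_\cell^{-1}\|u_h\|_{L^2(\Gamma_{\rm D}\cap\cell)}^2$ provided $\beta_\cell$ exceeds this $\delta$, which is exactly the threshold $\beta_\cell>C(\eta_0)$ in the statement. The paper uses the equivalent parametrisation $\alpha_\cell$ large in place of your $\delta$; once you fix the direction, the calculation is identical.
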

\begin{proof}
  The proof is analogous to~\cite[Theorem 5.7]{Badia2018}. Hence, we omit
  details. In order to show coercivity, given $u_h \in \V_h^\ag$, since we have
  that 
  \[
    a(u_h,u_h) = \opnormh{u_h}^2 - 2 \int_{\Gamma_{\rm D}} u_h ( \boldsymbol{n}
    \cdot \nabla u_h ) {\rm d}\Gamma,
  \]
  it suffices to show that $2 \int_{\Gamma_{\rm D}} u_h ( \boldsymbol{n} \cdot
  \nabla u_h ) {\rm d}\Gamma \lesssim \opnormh{u_h}^2$. For a (well- or
  ill-posed) cut cell $\cell$, usage of the Cauchy-Schwarz inequality, Young's
  inequality and Lemma~\ref{lem:trace-inv-grads} leads to
  \[
    2 \int_{\Gamma_{\rm D} \cap \cell} u_h ( \boldsymbol{n} \cdot \nabla u_h ) 
    {\rm d}\Gamma \leq \alpha_\cell C(\eta_0) h_\cell^{-1} \| u_h \|_{L^2( 
    \Gamma_{\rm D} \cap \cell)}^2 + \alpha_\cell^{-1} \| \nabla u_h \|_{L^2( 
    \Omega_\cell )}^2
  \]
 For tree-based meshes, the number of neighbouring cells is bounded and
  the cell sizes $h_\cell$ of $\cell \in \Omega_\cell$ differ by a bounded
  value, that depends on the 2:1 0-balance restriction and the maximum
  aggregation distance, one can take $\alpha_\cell > 0$ large enough, but
  uniform with respect to $h_\cell$ and cut location, such that:
  \[
    2 \int_{\Gamma_{\rm D}} u_h ( \boldsymbol{n} \cdot \nabla u_h ) 
    {\rm d}\Gamma \leq \sum_{\cell \in \T_h^\act} \alpha_\cell C(\eta_0) 
    h_\cell^{-1} \| u_h \|_{L^2(\Gamma_{\rm D} \cap \cell)}^2 + \frac{1}{2}
    \| \nabla u_h \|_{L^2( \Omega )}^2
  \]
  Therefore,
  \[
    a(u_h,u_h) \geq \frac{1}{2} \| \nabla u_h \|_{L^2( \Omega )}^2 + 
    \sum_{\cell \in \T_h^\act} ( \beta_\cell - \alpha_\cell C(\eta_0) ) h_\cell^{-1} 
    \| u_h \|_{L^2(\Gamma_{\rm D} \cap \cell)}^2
  \]
  For, e.g.~$\beta_\cell > 2 \alpha_\cell C(\eta_0)$, $a(u_h,u_h)$ is a norm. By
  construction, the lower bound for $\beta_\cell$ is independent of the
  $h_\cell$ and the intersection of $\Gamma_{\rm D}$ and $\T_h^\act$, but it
  depends on the well-posedness threshold $\eta_0$, which is a user-defined
  value. It proves the coercivity property in~\eqref{eq:th-coercivity}. Thus,
  the bilinear form is non-singular. The continuity in~\eqref{eq:th-continuity}
  can be readily proved by repeated use of the Cauchy-Schwarz inequality. Since
  the problem is finite-dimensional and the corresponding linear system matrix
  is non-singular, there exists one and only one solution of this problem.
\end{proof}
The linear system matrix that arises from problem~\eqref{eq:weak-PoissonEq} can
be defined as
\begin{equation}\label{eq:system-mat}
  A_{\sigma\sigma'} \doteq a(\tilde{\phi}^\sigma,\tilde{\phi}^{\sigma'}), \quad 
  \text{for} \ \sigma,\sigma' \in \Sigma^{\wp,\F},
\end{equation}
and we have that $\u \cdot \mathbf{A} \u = a(u_h,u_h)$, for any $u_h \in
\V_h^\ag$.
% We can now use the standard inverse inequality, see, 
% e.g.~\cite[p.111]{brenner_mathematical_2010},
% \begin{equation}\label{eq:std-inv-ineq}
%   \| \nabla u_h \|_{L^2(\Omega^\act)} \lesssim h_{\min}^{-1} \| u_h 
%   \|_{L^2(\Omega^\act)}, \quad \text{for any} \ u_h \in \V_h^\std,
% \end{equation}
% together with 
We can now use Proposition~\ref{prop:bound-mass-matrix} and Theorem~\ref{th:wp}
to show that we have the same bound as the body fitted problem for the linear
system matrix. This comes as a consequence of the following:
\begin{proposition}\label{prop:bound-system-matrix}
  Given $u_h \in \V_h^\ag$, the following bound holds:
  \[
    \| \u \|_\sigma^2 \lesssim a(u_h,u_h) \lesssim h_{\min}^{-2} \| \u 
    \|_\sigma^2.
  \]
\end{proposition}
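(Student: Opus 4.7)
The plan is to derive both bounds by stitching together results already established in the appendix: coercivity and continuity from Theorem~\ref{th:wp}, the discrete Poincaré inequality \eqref{eq:norm-is-norm}, the norm equivalence on $\V_h^\ag$ from Remark~\ref{rem:norms-equiv}, the mass matrix equivalence of Proposition~\ref{prop:bound-mass-matrix}, and inverse-type inequalities that exploit the uniform lower bound $h_{\min}$ on the cell size.

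For the lower bound $\|\u\|_\sigma^2 \lesssim a(u_h,u_h)$, I would start from coercivity~\eqref{eq:th-coercivity}, which yields $a(u_h,u_h) \gtrsim \opnormh{u_h}^2$. Next, I would invoke the discrete Poincaré-type inequality~\eqref{eq:norm-is-norm} to obtain $\opnormh{u_h}^2 \gtrsim \|u_h\|_{L^2(\Omega)}^2$. Finally, the left-hand inequality of Proposition~\ref{prop:bound-mass-matrix} gives $\|u_h\|_{L^2(\Omega)}^2 \gtrsim \|\u\|_\sigma^2$, closing the chain. Notably, the constant here is independent of the cut location (though it does depend on $\eta_0$ through Lemma~\ref{lem:l2-norm-cut-bound}).

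For the upper bound $a(u_h,u_h) \lesssim h_{\min}^{-2}\|\u\|_\sigma^2$, I would apply continuity~\eqref{eq:th-continuity} with $u = v = u_h$ to obtain $a(u_h,u_h) \lesssim \opnormvh{u_h}^2$, and then use Remark~\ref{rem:norms-equiv} to collapse this to $\opnormh{u_h}^2$. I would then bound each of the two summands in $\opnormh{u_h}^2$ separately. For the bulk gradient term, a standard inverse inequality on each cell combined with the lower bound $h_\cell \geq h_{\min}$ gives $\|\nabla u_h\|_{L^2(\Omega)}^2 \lesssim h_{\min}^{-2}\|u_h\|_{L^2(\Omega)}^2$. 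For the weak Dirichlet boundary term, I would use a trace-inverse bound analogous to Lemma~\ref{lem:trace-inv-grads}, but applied to $u_h$ itself rather than to $\nabla u_h$, to obtain $\|u_h\|_{L^2(\Gamma_\mathrm{D}\cap\cell)}^2 \lesssim C(\eta_0)\,h_\cell^{-1}\|u_h\|_{L^2(\Omega_\cell)}^2$, so that $\beta_\cell h_\cell^{-1}\|u_h\|_{L^2(\Gamma_\mathrm{D}\cap\cell)}^2 \lesssim h_{\min}^{-2}\|u_h\|_{L^2(\Omega_\cell)}^2$. Since the overlap of the patches $\Omega_\cell$ is uniformly bounded (by the $2$:$1$ balance and the maximum aggregation distance), summation yields $\opnormh{u_h}^2 \lesssim h_{\min}^{-2}\|u_h\|_{L^2(\Omega)}^2$. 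The right-hand inequality of Proposition~\ref{prop:bound-mass-matrix} then delivers $\|u_h\|_{L^2(\Omega)}^2 \lesssim \|\u\|_\sigma^2$, finishing the proof.

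The main obstacle will be the upper bound on the boundary contribution: the naive trace inequality is not uniform in the cut location because $|\Gamma_\mathrm{D}\cap\cell|$ may be arbitrarily small relative to $|\cell|$. This is exactly the place where one must lean on the aggregated structure (through the constraint-bounding estimate~\eqref{eq:bounds-K} and Lemma~\ref{lem:l2-norm-cut-bound}) to pass from integrals over a possibly tiny cut to integrals over the well-posed region $\Omega_\cell$. Once that robust trace bound is in place, the remaining steps are standard inverse-inequality manipulations that degrade only by the expected factor $h_{\min}^{-2}$.
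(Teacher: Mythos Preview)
Your lower bound argument is exactly the paper's: coercivity \eqref{eq:th-coercivity}, then the discrete Poincar\'e inequality \eqref{eq:norm-is-norm}, then the lower bound of Proposition~\ref{prop:bound-mass-matrix}.

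For the upper bound, your overall structure (continuity, Remark~\ref{rem:norms-equiv}, then bound the two pieces of $\opnormh{\cdot}$) matches the paper, but the gradient step is imprecise. A ``standard inverse inequality on each cell'' lives on the \emph{full} cell $\cell$, so what you actually get is
\[
  \| \nabla u_h \|_{L^2(\Omega)}^2 \le \| \nabla u_h \|_{L^2(\Omega^\act)}^2 \lesssim h_{\min}^{-2}\,\| u_h \|_{L^2(\Omega^\act)}^2,
\]
with $\Omega^\act$ on the right, not $\Omega$. The passage from $\| u_h \|_{L^2(\Omega^\act)}^2$ back to $\| u_h \|_{L^2(\Omega)}^2$ is \emph{not} cut-robust on ill-posed cells and is not a standard inverse estimate; it would require both directions of Proposition~\ref{prop:bound-mass-matrix}. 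The paper simply avoids the detour: it uses the upper bound of Proposition~\ref{prop:bound-mass-matrix}, which holds verbatim on $\Omega^\act$ (only triangle inequality and \eqref{eq:bounds-K} are used), to go directly from $\| u_h \|_{L^2(\Omega^\act)}^2$ to $\| \u \|_\sigma^2$. If you reroute your chain through $\Omega^\act$ your argument becomes correct and coincides with the paper's.

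For the boundary term you take a slightly different (but valid) route: you invoke a trace bound $\|u_h\|_{L^2(\Gamma_{\rm D}\cap\cell)}^2 \lesssim h_\cell^{-1}\|u_h\|_{L^2(\Omega_\cell)}^2$ in the spirit of Lemma~\ref{lem:trace-inv-grads} and then sum using finite overlap, whereas the paper uses the cruder scaling $\|u_h\|_{L^2(\Gamma_{\rm D}\cap\cell)}^2 \lesssim h_\cell^{d-1}\|\u_\cell\|_2^2$ (valid because $|\Gamma_{\rm D}\cap\cell|\lesssim h_\cell^{d-1}$) and passes straight to $\|\u\|_\sigma^2$ via \eqref{eq:bounds-K}. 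Both lead to the same $h_{\min}^{-2}\|\u\|_\sigma^2$ bound.
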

\begin{proof}
  The lower bound readily follows from coercivity in 
  \eqref{eq:th-coercivity}, \eqref{eq:norm-is-norm} and the
  lower bound of Proposition~\ref{prop:bound-mass-matrix}:
  \[
    a(u_h,u_h) \gtrsim \opnormh{u_h}^2 \gtrsim \| u_h \|_{L^2(\Omega)}^2 
    \gtrsim \| \u \|_\sigma^2
  \]
  For the upper bound, we first see that the boundary term of $\opnormh{\cdot}$
  is bounded by $\| \u \|_\sigma^2$. Indeed, by scaling arguments and the
  equivalence of norms for finite-dimensional spaces, we have that
  \[
    \| u_h \|_{L^2(\Gamma_{\rm D} \cap \cell)}^2 \lesssim h_\cell^{d-1} 
    \| \u_\cell \|_2^2,
  \]
  where $\u_\cell$ gathers both free and constrained \acp{dof}. Adding up for
  all cells, invoking the fact that the number of neighbour cells and constraint
  coefficients are bounded (see~\eqref{eq:bounds-K}), we obtain:
  \begin{equation}\label{eq:lbound-1}
    \sum_{\cell \in \T_h^\act} \beta_\cell h_\cell^{-1} \| u_h \|_{L^2(\Gamma_{\rm 
    D} \cap \cell)}^2 \lesssim h_{\min}^{-2} \| \u \|_\sigma^2.
  \end{equation}
  % On the other hand, using \eqref{eq:std-inv-ineq} and the upper bound
  % of Proposition~\ref{prop:bound-mass-matrix}, which also holds for
  % $\Omega^\act$, we get
  On the other hand, using a standard inverse inequality and the upper bound of
  Proposition~\ref{prop:bound-mass-matrix}, which also holds for $\Omega^\act$,
  we get
  \begin{equation}\label{eq:lbound-2}
    \| \nabla u_h \|_{L^2(\Omega)}^2 \leq \| \nabla u_h \|_{L^2(\Omega^\act)}^2
    \lesssim h_{\min}^{-2} \| u_h \|_{L^2(\Omega^\act)}^2 \lesssim h_{\min}^{-2} 
    \| \u \|_\sigma^2.
  \end{equation}
  Combining continuity of the bilinear form~\eqref{eq:weak-PoissonEq}, in
  \eqref{eq:th-continuity}, with
  Remark~\ref{rem:norms-equiv},~\eqref{eq:lbound-1} and~\eqref{eq:lbound-2}, we
  get the sought-after upper bound:
  \[
    a(u_h,u_h) \lesssim \opnormvh{u_h}^2 \lesssim \opnormh{u_h}^2 
    \lesssim h_{\min}^{-2} \| \u \|_\sigma^2.
  \]
\end{proof}
By recalling Corollary~\ref{corol:cn-mass-matrix}, we obtain the following 
condition number bound.
\begin{corollary}
  The condition number of the linear system matrix $\mathbf{A}$, associated to
  Problem~\eqref{eq:weak-PoissonEq}, preconditioned by the mass matrix
  $\mathbf{M}$, related to the aggregated \ac{fe} space $\V_h^\ag$, satisfies the
  bound $k(\mathbf{M}^{-1}\mathbf{A}) \le C h_{\min}^{-2}$, for a positive
  constant $C > 0$ independent on cut location.
\end{corollary}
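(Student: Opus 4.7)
The plan is to combine Propositions~\ref{prop:bound-mass-matrix} and~\ref{prop:bound-system-matrix} through a standard generalised eigenvalue argument. Essentially all of the nontrivial analytical work has already been carried out: the previous two propositions provide matching two-sided bounds on $a(u_h, u_h)$ and $\|u_h\|_{L^2(\Omega)}^2$ in terms of the same auxiliary quantity $\|\u\|_\sigma^2$, so the final step is purely algebraic.

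First, I would recall the algebraic identifications $\u \cdot \mathbf{A} \u = a(u_h, u_h)$, which follows from definition~\eqref{eq:system-mat}, and $\u \cdot \mathbf{M} \u = \|u_h\|_{L^2(\Omega)}^2$, which is the standard expression for the mass matrix in terms of the (unconstrained) nodal vector of free well-posed \acp{dof}. Both matrices act on the vector space indexed by $\Sigma^{\wp,\F}$.

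Next, I would chain the bounds. From Proposition~\ref{prop:bound-mass-matrix}, $\|\u\|_\sigma^2 \lesssim \u \cdot \mathbf{M} \u \lesssim \|\u\|_\sigma^2$, so that $\mathbf{M}$ is spectrally equivalent to the diagonal form $\|\u\|_\sigma^2$ with constants independent of cut location (they do depend on $\eta_0$, as noted). From Proposition~\ref{prop:bound-system-matrix}, $\|\u\|_\sigma^2 \lesssim \u \cdot \mathbf{A} \u \lesssim h_{\min}^{-2} \|\u\|_\sigma^2$. Combining these two,
\begin{equation*}
  \u \cdot \mathbf{M} \u \;\lesssim\; \u \cdot \mathbf{A} \u \;\lesssim\; h_{\min}^{-2}\, \u \cdot \mathbf{M} \u \qquad \text{for all } \u,
\end{equation*}
with constants independent of cut location.

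Finally, since $\mathbf{M}$ is symmetric positive definite (by Corollary~\ref{corol:cn-mass-matrix}, or directly from the lower bound above) and $\mathbf{A}$ is symmetric positive definite (by coercivity in Theorem~\ref{th:wp}), the generalised eigenvalue problem $\mathbf{A}\u = \lambda \mathbf{M}\u$ has real positive eigenvalues given by the Rayleigh quotients $\lambda = (\u\cdot\mathbf{A}\u)/(\u\cdot\mathbf{M}\u)$. The spectral equivalence above shows $c_1 \leq \lambda \leq c_2 h_{\min}^{-2}$ for positive constants independent of cut location. Since the spectrum of $\mathbf{M}^{-1}\mathbf{A}$ coincides with these generalised eigenvalues, I conclude $k(\mathbf{M}^{-1}\mathbf{A}) = \lambda_{\max}/\lambda_{\min} \leq C h_{\min}^{-2}$, yielding the claimed bound.

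There is no real obstacle here: the delicate analysis (cut-independence via the $\eta_0$-dependent constants, handling of hanging \acp{dof} in the lower bound through Lemmas~\ref{lemma:hanging-to-owner} and~\ref{lemma:l2-hanging-to-l2-coarse-dofs}, and the inverse-inequality argument for the $h_{\min}^{-2}$ factor) has already been absorbed into the two preceding propositions. The only care needed is to observe that the $h_{\min}^{-2}$ factor appears only in the upper bound on $\mathbf{A}$ and not in either bound on $\mathbf{M}$, which is what produces the final scaling of the preconditioned condition number—identical to that of a body-fitted \ac{fe} discretisation of the same \ac{pde}.
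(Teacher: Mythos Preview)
Your proposal is correct and matches the paper's approach: the paper does not give an explicit proof but simply states that the corollary follows by recalling Corollary~\ref{corol:cn-mass-matrix} together with Proposition~\ref{prop:bound-system-matrix}. Your write-up spells out the standard Rayleigh-quotient argument that the paper leaves implicit, and the ingredients you invoke (the two-sided bounds from Propositions~\ref{prop:bound-mass-matrix} and~\ref{prop:bound-system-matrix}) are exactly those underlying the paper's one-line justification.
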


A priori error estimates can be proved following the same steps in~\cite[Section
5.6]{Badia2018} and, for conciseness, are not covered here. The key arguments,
leading to the estimates, are standard \ac{fe} arguments, the results above and
the fact that the nodal interpolator of a continuous function $u$ in
$\mathcal{C}^0(\overline{\Omega})$, defined as $\mathcal{I}_h(u) \doteq
\sum_{\sigma \in \Sigma^{\wp,\F}} u(\x_\sigma) \tilde{\phi}^\sigma$, is bounded
above, since constraints are also bounded above (see~\eqref{eq:bounds-K}).

\begin{small}

\section*{Acknowledgements}

\thethanks

\setlength{\bibsep}{0.0ex plus 0.00ex}
\bibliographystyle{myabbrvnat}
\bibliography{art042}

\begin{thebibliography}{37}
\providecommand{\natexlab}[1]{#1}
\providecommand{\url}[1]{\texttt{#1}}
\expandafter\ifx\csname urlstyle\endcsname\relax
  \providecommand{\doi}[1]{doi: #1}\else
  \providecommand{\doi}{doi: \begingroup \urlstyle{rm}\Url}\fi

\bibitem[Holke(2018)]{Holke2018}
J.~Holke.
\newblock {Scalable Algorithms for Parallel Tree-based Adaptive Mesh Refinement
  with General Element Types}.
\newblock 2018.

\bibitem[Burstedde and Holke(2016)]{Burstedde2016}
C.~Burstedde and J.~Holke.
\newblock {A Tetrahedral Space-Filling Curve for Nonconforming Adaptive
  Meshes}.
\newblock \emph{SIAM Journal on Scientific Computing}, 38\penalty0
  (5):\penalty0 C471--C503, 2016.
\newblock \doi{10.1137/15M1040049}.

\bibitem[Burstedde et~al.(2011)Burstedde, Wilcox, and
  Ghattas]{burstedde_p4est_2011}
C.~Burstedde, L.~C. Wilcox, and O.~Ghattas.
\newblock {p4est: Scalable Algorithms for Parallel Adaptive Mesh Refinement on
  Forests of Octrees}.
\newblock \emph{SIAM Journal on Scientific Computing}, 33\penalty0
  (3):\penalty0 1103--1133, 2011.
\newblock \doi{10.1137/100791634}.

\bibitem[Isaac et~al.(2015)Isaac, Burstedde, Wilcox, and Ghattas]{Isaac2014}
T.~Isaac, C.~Burstedde, L.~C. Wilcox, and O.~Ghattas.
\newblock {Recursive Algorithms for Distributed Forests of Octrees}.
\newblock \emph{SIAM Journal on Scientific Computing}, 37\penalty0
  (5):\penalty0 C497--C531, 2015.
\newblock \doi{10.1137/140970963}.

\bibitem[Olm et~al.(2019)Olm, Badia, and Mart{\'{i}}n]{Olm2018}
M.~Olm, S.~Badia, and A.~F. Mart{\'{i}}n.
\newblock {On a general implementation of {\$}h{\$}- and {\$}p{\$}-adaptive
  curl-conforming finite elements}.
\newblock \emph{Advances in Engineering Software}, 132:\penalty0 74--91, 2019.
\newblock \doi{10.1016/j.advengsoft.2019.03.006}.

\bibitem[Rudi et~al.(2015)Rudi, Ghattas, Malossi, Isaac, Stadler, Gurnis,
  Staar, Ineichen, Bekas, and Curioni]{Rudi2015}
J.~Rudi, O.~Ghattas, A.~C.~I. Malossi, T.~Isaac, G.~Stadler, M.~Gurnis,
  P.~W.~J. Staar, Y.~Ineichen, C.~Bekas, and A.~Curioni.
\newblock {An extreme-scale implicit solver for complex PDEs: highly
  heterogeneous flow in earth's mantle}.
\newblock In \emph{Proceedings of the International Conference for High
  Performance Computing, Networking, Storage and Analysis on - SC '15}, pages
  1--12, New York, New York, USA, 2015. ACM Press.
\newblock \doi{10.1145/2807591.2807675}.

\bibitem[Burstedde et~al.(2008)Burstedde, Ghattas, Gurnis, Stadler, {Eh Tan},
  Tu, Wilcox, and Zhong]{Burstedde2008}
C.~Burstedde, O.~Ghattas, M.~Gurnis, G.~Stadler, {Eh Tan}, T.~Tu, L.~C. Wilcox,
  and S.~Zhong.
\newblock {Scalable adaptive mantle convection simulation on petascale
  supercomputers}.
\newblock In \emph{2008 SC - International Conference for High Performance
  Computing, Networking, Storage and Analysis}, pages 1--15. IEEE, 2008.
\newblock \doi{10.1109/SC.2008.5214248}.

\bibitem[Neiva et~al.(2019)Neiva, Badia, Martín, and
  Chiumenti]{neiva2019scalable}
E.~Neiva, S.~Badia, A.~F. Martín, and M.~Chiumenti.
\newblock A scalable parallel finite element framework for growing geometries.
  application to metal additive manufacturing.
\newblock \emph{International Journal for Numerical Methods in Engineering},
  119\penalty0 (11):\penalty0 1098--1125, 2019.
\newblock \doi{10.1002/nme.6085}.

\bibitem[Ainsworth et~al.(2000)Ainsworth, Oden, and {Wiley InterScience (Online
  service)}]{Ainsworth2000}
M.~Ainsworth, J.~T. J.~T. Oden, and {Wiley InterScience (Online service)}.
\newblock \emph{{A posteriori error estimation in finite element analysis}}.
\newblock Wiley, 2000.

\bibitem[Burman et~al.(2020)Burman, He, and Larson]{burman2019posteriori}
E.~Burman, C.~He, and M.~G. Larson.
\newblock {A posteriori error estimates with boundary correction for a cut
  finite element method}.
\newblock \emph{IMA Journal of Numerical Analysis}, 12 2020.
\newblock \doi{10.1093/imanum/draa085}.

\bibitem[Rheinboldt and Mesztenyi(1980)]{Rheinboldt1980}
W.~C. Rheinboldt and C.~K. Mesztenyi.
\newblock {On a Data Structure for Adaptive Finite Element Mesh Refinements}.
\newblock \emph{ACM Transactions on Mathematical Software (TOMS)}, 6\penalty0
  (2):\penalty0 166--187, jun 1980.
\newblock \doi{10.1145/355887.355891}.

\bibitem[Shephard(1984)]{Shephard1984}
M.~S. Shephard.
\newblock {Linear multipoint constraints applied via transformation as part of
  a direct stiffness assembly process}.
\newblock \emph{International Journal for Numerical Methods in Engineering},
  20\penalty0 (11):\penalty0 2107--2112, 1984.
\newblock \doi{10.1002/nme.1620201112}.

\bibitem[Bangerth et~al.(2012)Bangerth, Burstedde, Heister, and
  Kronbichler]{bangerth_algorithms_2012}
W.~Bangerth, C.~Burstedde, T.~Heister, and M.~Kronbichler.
\newblock {Algorithms and data structures for massively parallel generic
  adaptive finite element codes}.
\newblock \emph{ACM Trans. Math. Softw.}, 38\penalty0 (2):\penalty0
  14:1--14:28, 2012.
\newblock \doi{10.1145/2049673.2049678}.

\bibitem[Badia et~al.(2020)Badia, Mart{\'{i}}n, Neiva, and Verdugo]{Badia2019b}
S.~Badia, A.~F. Mart{\'{i}}n, E.~Neiva, and F.~Verdugo.
\newblock {A Generic Finite Element Framework on Parallel Tree-Based Adaptive
  Meshes}.
\newblock \emph{SIAM Journal on Scientific Computing}, 42\penalty0
  (6):\penalty0 C436--C468, 2020.
\newblock \doi{10.1137/20M1328786}.

\bibitem[de~Prenter et~al.(2017)de~Prenter, Verhoosel, van Zwieten, and van
  Brummelen]{DePrenter2017}
F.~de~Prenter, C.~V. Verhoosel, G.~J. van Zwieten, and E.~H. van Brummelen.
\newblock {Condition number analysis and preconditioning of the finite cell
  method}.
\newblock \emph{Computer Methods in Applied Mechanics and Engineering},
  316:\penalty0 297--327, 2017.
\newblock \doi{10.1016/j.cma.2016.07.006}.

\bibitem[Burman et~al.(2015)Burman, Claus, Hansbo, Larson, and
  Massing]{burman_cutfem_2015}
E.~Burman, S.~Claus, P.~Hansbo, M.~G. Larson, and A.~Massing.
\newblock {CutFEM: Discretizing Geometry and Partial Differential Equations}.
\newblock \emph{International Journal for Numerical Methods in Engineering},
  104\penalty0 (7):\penalty0 472--501, 2015.
\newblock \doi{10.1002/nme.4823}.

\bibitem[Schillinger and Ruess(2015)]{Schillinger2015}
D.~Schillinger and M.~Ruess.
\newblock {The Finite Cell Method: A Review in the Context of Higher-Order
  Structural Analysis of CAD and Image-Based Geometric Models}.
\newblock \emph{Archives of Computational Methods in Engineering}, 22\penalty0
  (3):\penalty0 391--455, 2015.
\newblock \doi{10.1007/s11831-014-9115-y}.

\bibitem[Badia et~al.(2018)Badia, Verdugo, and Mart{\'{i}}n]{Badia2018}
S.~Badia, F.~Verdugo, and A.~F. Mart{\'{i}}n.
\newblock {The aggregated unfitted finite element method for elliptic
  problems}.
\newblock \emph{Computer Methods in Applied Mechanics and Engineering},
  336:\penalty0 533--553, 2018.
\newblock \doi{10.1016/j.cma.2018.03.022}.

\bibitem[Sukumar et~al.(2001)Sukumar, Chopp, Mo{\"{e}}s, and
  Belytschko]{sukumar_modeling_2001}
N.~Sukumar, D.~L. Chopp, N.~Mo{\"{e}}s, and T.~Belytschko.
\newblock {Modeling holes and inclusions by level sets in the extended
  finite-element method}.
\newblock \emph{Computer Methods in Applied Mechanics and Engineering},
  190\penalty0 (46–47):\penalty0 6183--6200, 2001.
\newblock \doi{10.1016/S0045-7825(01)00215-8}.

\bibitem[Sukumar et~al.(2000)Sukumar, Mo{\"{e}}s, Moran, and
  Belytschko]{Sukumar2000}
N.~Sukumar, N.~Mo{\"{e}}s, B.~Moran, and T.~Belytschko.
\newblock {Extended finite element method for three-dimensional crack
  modelling}.
\newblock \emph{International Journal for Numerical Methods in Engineering},
  48\penalty0 (11):\penalty0 1549--1570, 2000.
\newblock \doi{10.1002/1097-0207(20000820)48:11<1549::AID-NME955>3.0.CO;2-A}.

\bibitem[Massing et~al.(2015)Massing, Larson, Logg, and Rognes]{Massing2015}
A.~Massing, M.~G. Larson, A.~Logg, and M.~E. Rognes.
\newblock {A Nitsche-based cut finite element method for a fluid-structure
  interaction problem}.
\newblock \emph{Communications in Applied Mathematics and Computational
  Science}, 10\penalty0 (2):\penalty0 97--120, 2015.
\newblock \doi{10.2140/camcos.2015.10.97}.

\bibitem[Sauerland and Fries(2011)]{Sauerland2011}
H.~Sauerland and T.~P. Fries.
\newblock {The extended finite element method for two-phase and free-surface
  flows: A systematic study}.
\newblock \emph{Journal of Computational Physics}, 230\penalty0 (9):\penalty0
  3369--3390, 2011.
\newblock \doi{10.1016/j.jcp.2011.01.033}.

\bibitem[Burman et~al.(2018)Burman, Elfverson, Hansbo, Larson, and
  Larsson]{Burman2018}
E.~Burman, D.~Elfverson, P.~Hansbo, M.~G. Larson, and K.~Larsson.
\newblock {Shape optimization using the cut finite element method}.
\newblock \emph{Computer Methods in Applied Mechanics and Engineering},
  328:\penalty0 242--261, 2018.
\newblock \doi{10.1016/j.cma.2017.09.005}.

\bibitem[Nguyen et~al.(2017)Nguyen, Stoter, Baum, Kirschke, Ruess, Yosibash,
  and Schillinger]{Nguyen2017}
L.~Nguyen, S.~Stoter, T.~Baum, J.~Kirschke, M.~Ruess, Z.~Yosibash, and
  D.~Schillinger.
\newblock {Phase-field boundary conditions for the voxel finite cell method:
  Surface-free stress analysis of CT-based bone structures}.
\newblock \emph{International Journal for Numerical Methods in Biomedical
  Engineering}, 33\penalty0 (12):\penalty0 e2880, 2017.
\newblock \doi{10.1002/cnm.2880}.

\bibitem[Badia and Verdugo(2018)]{badia_robust_2017}
S.~Badia and F.~Verdugo.
\newblock {Robust and scalable domain decomposition solvers for unfitted finite
  element methods}.
\newblock \emph{Journal of Computational and Applied Mathematics},
  344:\penalty0 740--759, 2018.
\newblock \doi{10.1016/j.cam.2017.09.034}.

\bibitem[Jomo et~al.(2019)Jomo, Prenter, Elhaddad, Angella, Verhoosel,
  Kollmannsberger, Kirschke, Brummelen, and Rank]{Jomo2018}
J.~N. Jomo, F.~D. Prenter, M.~Elhaddad, D.~D. Angella, C.~V. Verhoosel,
  S.~Kollmannsberger, J.~S. Kirschke, E.~H.~V. Brummelen, and E.~Rank.
\newblock {Robust and parallel scalable iterative solutions for large-scale
  finite cell analyses}.
\newblock \emph{Finite Elements in Analysis and Design}, 163:\penalty0 14--30,
  2019.
\newblock \doi{10.1016/j.finel.2019.01.009}.

\bibitem[Verdugo et~al.(2019)Verdugo, Mart{\'{i}}n, and Badia]{Verdugo2019}
F.~Verdugo, A.~F. Mart{\'{i}}n, and S.~Badia.
\newblock {Distributed-memory parallelization of the aggregated unfitted finite
  element method}.
\newblock \emph{Computer Methods in Applied Mechanics and Engineering},
  357:\penalty0 112583, 2019.
\newblock \doi{10.1016/j.cma.2019.112583}.

\bibitem[Balay et~al.(2019)Balay, Abhyankar, Adams, Brown, Brune, Buschelman,
  Dalcin, Dener, Eijkhout, Gropp, Kaushik, Knepley, May, McInnes, Mills,
  Munson, Rupp, Sanan, Smith, Zampini, Zhang, and Zhang]{petsc-user-ref}
S.~Balay, S.~Abhyankar, M.~F. Adams, J.~Brown, P.~Brune, K.~Buschelman,
  L.~Dalcin, A.~Dener, V.~Eijkhout, W.~D. Gropp, D.~Kaushik, M.~G. Knepley,
  D.~A. May, L.~C. McInnes, R.~T. Mills, T.~Munson, K.~Rupp, P.~Sanan, B.~F.
  Smith, S.~Zampini, H.~Zhang, and H.~Zhang.
\newblock {PETSc Users Manual}.
\newblock \url{http://www.mcs.anl.gov/petsc}, 2019.

\bibitem[Badia et~al.(2018{\natexlab{a}})Badia, Mart{\'{i}}n, and
  Verdugo]{Badia2018a}
S.~Badia, A.~F. Mart{\'{i}}n, and F.~Verdugo.
\newblock {Mixed aggregated finite element methods for the unfitted
  discretization of the Stokes problem}.
\newblock \emph{SIAM Journal on Scientific Computing}, 40\penalty0
  (6):\penalty0 B1541--B1576, 2018{\natexlab{a}}.
\newblock \doi{10.1137/18M1185624}.

\bibitem[Badia et~al.(2018{\natexlab{b}})Badia, Mart{\'{i}}n, and
  Principe]{badia-fempar}
S.~Badia, A.~F. Mart{\'{i}}n, and J.~Principe.
\newblock {FEMPAR: An Object-Oriented Parallel Finite Element Framework}.
\newblock \emph{Archives of Computational Methods in Engineering}, 25\penalty0
  (2):\penalty0 195--271, 2018{\natexlab{b}}.
\newblock \doi{10.1007/s11831-017-9244-1}.

\bibitem[{C}erven{\'{y}} et~al.(2019){C}erven{\'{y}}, Dobrev, and
  Kolev]{Cerveny2019}
J.~{C}erven{\'{y}}, V.~Dobrev, and T.~Kolev.
\newblock {Nonconforming Mesh Refinement for High-Order Finite Elements}.
\newblock \emph{SIAM Journal on Scientific Computing}, 41\penalty0
  (4):\penalty0 C367--C392, 2019.
\newblock \doi{10.1137/18M1193992}.

\bibitem[Demkowicz(2006)]{demkowicz2006computing}
L.~Demkowicz.
\newblock \emph{Computing with hp-adaptive finite elements: Volume 1 one and
  two dimensional elliptic and maxwell problems}.
\newblock Chapman and Hall/CRC, 2006.

\bibitem[Li et~al.(1995)Li, Bettess, Bull, Bond, and
  Applegarth]{li1995theoretical}
L.-Y. Li, P.~Bettess, J.~Bull, T.~Bond, and I.~Applegarth.
\newblock Theoretical formulations for adaptive finite element computations.
\newblock \emph{Communications in Numerical Methods in Engineering},
  11\penalty0 (10):\penalty0 857--868, 1995.
\newblock \doi{10.1002/cnm.1640111010}.

\bibitem[O{\~n}ate and Bugeda(1993)]{onate1993study}
E.~O{\~n}ate and G.~Bugeda.
\newblock A study of mesh optimality criteria in adaptive finite element
  analysis.
\newblock \emph{Engineering computations}, 10\penalty0 (4):\penalty0 307--321,
  1993.
\newblock \doi{10.1108/eb023910}.

\bibitem[Li and Bettess(1995)]{li1995notes}
L.-Y. Li and P.~Bettess.
\newblock Notes on mesh optimal criteria in adaptive finite element
  computations.
\newblock \emph{Communications in numerical methods in engineering},
  11\penalty0 (11):\penalty0 911--915, 1995.
\newblock \doi{10.1002/cnm.1640111105}.

\bibitem[D{\'\i}ez and Huerta(1999)]{diez1999unified}
P.~D{\'\i}ez and A.~Huerta.
\newblock A unified approach to remeshing strategies for finite element
  h-adaptivity.
\newblock \emph{Computer Methods in Applied Mechanics and Engineering},
  176\penalty0 (1-4):\penalty0 215--229, 1999.
\newblock \doi{10.1016/S0045-7825(98)00338-7}.

\bibitem[Elman et~al.(2014)Elman, Silvester, and Wathen]{elman2014finite}
H.~C. Elman, D.~J. Silvester, and A.~J. Wathen.
\newblock \emph{Finite elements and fast iterative solvers: with applications
  in incompressible fluid dynamics}.
\newblock Oxford University Press, USA, 2014.
\newblock \doi{10.1093/acprof:oso/9780199678792.001.0001}.

\end{thebibliography}

\end{small}

\end{document}